\theoremstyle{plain}
  \newtheorem{theorem}{\bf Theorem}[section]
  \newtheorem{proposition}[theorem]{\bf Proposition}
  \newtheorem{lemma}[theorem]{\bf Lemma}
  \newtheorem{corollary}[theorem]{\bf Corollary}
\newtheorem{conjecture}[theorem]{\bf Conjecture}
\theoremstyle{remark}
  \newtheorem{remark}[theorem]{\bf Remark}
\theoremstyle{remarks}
\DeclareMathOperator{\coker}{coker}
\begin{document}
\title[Nonlinear Brascamp--Lieb inequalities]{Some nonlinear Brascamp--Lieb inequalities and
applications to harmonic analysis} \keywords{Brascamp--Lieb
inequalities, induction-on-scales, Fourier extension estimates}
\subjclass[2000]{44A12; 42B10; 44A35}
\author{Jonathan Bennett}
\author{Neal Bez}
\thanks{Both authors were supported by EPSRC
grant EP/E022340/1.}
\address{Jonathan Bennett and Neal Bez, School of Mathematics, The Watson Building, University of Birmingham, Edgbaston,
Birmingham, B15 2TT, England.} \email{J.Bennett@bham.ac.uk}
\email{N.Bez@bham.ac.uk}
\date{29th June 2010}
\begin{abstract}
We use the method of induction-on-scales to prove certain
diffeomorphism invariant nonlinear Brascamp--Lieb inequalities. We
provide applications to multilinear convolution inequalities and the
restriction theory for the Fourier transform, extending to higher
dimensions recent work of Bejenaru--Herr--Tataru and
Bennett--Carbery--Wright.
\end{abstract}
\maketitle
\section{Introduction}
The purpose of this paper is to obtain nonlinear generalisations of
certain Brascamp--Lieb inequalities and apply them to some
well-known problems in euclidean harmonic analysis. Our particular
approach to such inequalities is by induction-on-scales, and builds
on the recent work of Bejenaru, Herr and Tataru \cite{BHT}.

The Brascamp--Lieb inequalities simultaneously generalise important
classical inequalities such as the multilinear H\"older, sharp Young
convolution and Loomis--Whitney inequalities. They may be formulated
as follows. Suppose $m\geq 2$ and $d, d_1,\hdots,d_m$ are positive
integers, and for each $1\leq j\leq m$,
$B_j:\mathbb{R}^{d}\rightarrow\mathbb{R}^{d_j}$ is a linear
surjection and $p_j\in [0,1]$. The Brascamp--Lieb inequality
associated with these objects takes the form
\begin{equation}\label{bl}
\int_{\mathbb{R}^d}\prod_{j=1}^m (f_j\circ B_j)^{p_j}\leq C\prod_{j=1}^m\left(\int_{\mathbb{R}^{d_j}}f_j\right)^{p_j}
\end{equation}
for all nonnegative $f_j\in L^1(\mathbb{R}^{d_j})$, $1\leq j\leq m$.
Here $C$ denotes a constant depending on the datum
$(\mathbf{B},\mathbf{p}):=((B_j),(p_j))$, which at this level of
generality may of course be infinite.
For nonnegative functions $f_j\in L^1(\mathbb{R}^{d_j})$ satisfying
$0<\int f_j<\infty$, we define the quantity
$$
\mbox{BL}(\mathbf{B},\mathbf{p};\mathbf{f})=\frac{\int_{\mathbb{R}^d}\prod_{j=1}^m
(f_j\circ
B_j)^{p_j}}{\prod_{j=1}^m\left(\int_{\mathbb{R}^{d_j}}f_j\right)^{p_j}},$$
where $\mathbf{f}:=(f_j)$. We may then define the Brascamp--Lieb
constant $0<\mbox{BL}(\mathbf{B},\mathbf{p})\leq\infty$ to be the
supremum of $\mbox{BL}(\mathbf{B},\mathbf{p};\mathbf{f})$ over all
such inputs $\mathbf{f}$. The quantity
$\mbox{BL}(\mathbf{B},\mathbf{p})$ is of course the smallest
$0<C\leq\infty$ for which \eqref{bl} holds. It should be noted here
that there is a natural equivalence relation on Brascamp--Lieb data,
where $(\mathbf{B},\mathbf{p})\sim (\mathbf{B'},\mathbf{p'})$ if
$\mathbf{p}=\mathbf{p'}$ and there exist invertible linear
transformations $C: \mathbb{R}^d \rightarrow \mathbb{R}^{d}$ and
$C_j:\mathbb{R}^{d_j} \rightarrow \mathbb{R}^{d_j}$ such that $B'_j
= C_j^{-1} B_j C$ for all $j$; we refer to $C$ and $C_j$ as the
intertwining transformations. In this case, simple changes of
variables show that
$$
\mbox{BL}(\mathbf{B}',\mathbf{p}')=\frac{\prod_{j=1}^m |\det C_j|^{p_j}}{|\det C|}
\mbox{BL}(\mathbf{B},\mathbf{p}),
$$
and thus $\mbox{BL}(\mathbf{B},\mathbf{p})<\infty$ if and only if
$\mbox{BL}(\mathbf{B}',\mathbf{p}')<\infty$. This terminology is
taken from \cite{BCCT}.

The generality of this setup of course raises questions, many of
which have been addressed in the literature. In \cite{L} Lieb showed
that the supremum above is exhausted by centred gaussian inputs,
prompting further investigation into issues including the finiteness
of $\mbox{BL}(\mathbf{B},\mathbf{p})$ and the
extremisability/gaussian-extremisability of
$\mbox{BL}(\mathbf{B},\mathbf{p};\mathbf{f})$. A fuller description
of the literature is not appropriate for the purposes of this paper.
The reader is referred to the survey article \cite{Barthe2} and the
references there.

A large number of problems in harmonic analysis require nonlinear
versions of inequalities belonging to this family; see \cite{BHHT},
\cite{BHT}, \cite{BCW}, \cite{Gressman}, \cite{Stovall}, and
\cite{TW} for instance. The generalisations we seek here are local
in nature, and amount to allowing the maps $B_j$ to be nonlinear
submersions in a neighbourhood of a point $x_0\in\mathbb{R}^d$, and
then looking for a neighbourhood $U$ of $x_0$ such that if $\psi$ is
a cutoff function supported in $U$, there exists a constant $C>0$
for which
\begin{equation}\label{aspiration}
\int_{\mathbb{R}^d}\prod_{j=1}^m f_j(B_j(x))^{p_j}\psi(x)\,\mathrm{d}x\leq C
\prod_{j=1}^m\left(\int_{\mathbb{R}^{d_j}}f_j\right)^{p_j}
\end{equation}
for all nonnegative $f_j\in L^{1}(\mathbb{R}^{d_j})$, $1\leq j\leq
m$. The applications of such inequalities invariably require more
quantitative statements involving the sizes of the neighbourhood $U$
and constant $C$, and also the nature of any
smoothness/non-degeneracy conditions imposed on the nonlinear maps
$(B_j)$.

Notice that if $d_j = d$ for each $j$, then the nonlinear $B_j$ are
of course local diffeomorphisms. In this situation necessarily $p_1
+ \cdots + p_m = 1$ and \eqref{aspiration} follows from the
$m$-linear H\"older inequality. Similar considerations allow to
reduce matters to the case where $d_j<d$ for all $j$.

It is perhaps reasonable to expect to obtain an inequality of the
form \eqref{aspiration} for smooth nonlinear maps $(B_j)$ and
exponents $(p_j)$ for which $\mbox{BL}((\mathrm{d}B_j(x_0)),(p_j)) <
\infty$. Here $\mathrm{d}B_j(x_0)$ denotes the derivative map of
$B_j$ at $x_0$. However, the techniques that we employ in this paper
appear to require additional structural hypotheses on the maps
$\mathrm{d}B_j(x_0)$, and so instead we seek to identify a natural
class
$$
\mathcal{C} \subseteq \{ (\mathbf{B},\mathbf{p}) : \text{each $B_j$ is \emph{linear} and $\mbox{BL}(\mathbf{B},\mathbf{p}) < \infty$} \}
$$
such that \eqref{aspiration} holds for nonlinear $(B_j)$ with
$((\mathrm{d}B_j(x_0)),(p_j)) \in \mathcal{C}$. As will become clear
in Section \ref{section:C0}, a natural choice for consideration is
\begin{equation} \label{e:C0}
\mathcal{C}= \bigg\{ (\mathbf{B},\mathbf{p}) : \bigoplus_{j=1}^m \ker B_j = \mathbb{R}^d, \;\;  p_1=\cdots=p_m=\tfrac{1}{m-1} \bigg\}.
\end{equation}
This class contains the classical Loomis--Whitney datum \cite{LW},
whereby $m=d$, $d_j=d-1$, $p_j=1/(d-1)$ and
$B_j(x_1,\hdots,x_d)=(x_1,\hdots,\widehat{x_j},\hdots,x_d)$ for all
$1\leq j\leq d$. Here $\;\widehat{\;}\;$ denotes omission.

The purpose of this paper is two-fold. Firstly, we establish an
inequality of the form \eqref{aspiration} whenever
$((\mathrm{d}B_j(x_0)),(p_j)) \in \mathcal{C}$, where $\mathcal{C}$
is defined in \eqref{e:C0}. Secondly, we use these inequalities to
deduce certain sharp multilinear convolution estimates, which in
turn yield progress on the multilinear restriction conjecture for
the Fourier transform. These applications can be found in Section
\ref{section:applications}.

Before stating our nonlinear Brascamp--Lieb inequalities, it is
important that we discuss further the class $\mathcal{C}$ given in
\eqref{e:C0}. Notice that the transversality hypothesis
\begin{equation} \label{e:directsum}
  \bigoplus_{j=1}^m \ker B_j = \mathbb{R}^d
\end{equation}
is preserved under the equivalence relation on Brascamp--Lieb data;
that is, it is invariant under $B_j\mapsto C_j^{-1}B_jC$ for
invertible linear transformations
$C:\mathbb{R}^d\rightarrow\mathbb{R}^d$ and
$C_j:\mathbb{R}^{d_j}\rightarrow\mathbb{R}^{d_j}$. By choosing
appropriate intertwining transformations $C$ and $C_j$, an
elementary calculation shows that if
$(\mathbf{B},\mathbf{p})\in\mathcal{C}$ then
$(\mathbf{B},\mathbf{p})\sim (\mathbf{\Pi},\mathbf{p})$, where
$\mathbf{\Pi}=(\Pi_j)_{j=1}^m$ are certain coordinate projections.
In order to define $\Pi_j$ we let $\mathcal{K}_j \subseteq
\{1,\ldots,d\}$ be given by
$$
\mathcal{K}_j = \{d_1' + \cdots + d_{j-1}' + 1, \ldots, d_1' + \cdots + d_{j-1}' + d_j'\},
$$
where $d_j'=d-d_j$ denotes the dimension of the kernel of $B_j$, so
that $\mathcal{K}_1,\ldots,\mathcal{K}_m$ form a partition of
$\{1,\ldots,d\}$. Then we let $\Pi_j : \mathbb{R}^d \rightarrow
\mathbb{R}^{d_j}$ be given by
\begin{equation} \label{e:Pijdefn}
\Pi_j(x) = (x_k)_{k \in \mathcal{K}_j^c}.
\end{equation}
\begin{proposition} \label{p:Finnerorth} \cite{Finner} If $\emph{\textbf{p}} =
(\tfrac{1}{m-1},\ldots,\tfrac{1}{m-1})$ then $\emph{\mbox{BL}}(\mathbf{\Pi},\mathbf{p})=1$, and thus
\begin{equation} \label{e:Finnerorth}
\int_{\mathbb{R}^d} \prod_{j=1}^m f_j(\Pi_jx)^{\frac{1}{m-1}}\,\mathrm{d}x \leq \prod_{j=1}^m \bigg( \int_{\mathbb{R}^{d_j}} f_j \bigg)^{\frac{1}{m-1}}
\end{equation}
holds for all nonnegative $f_j \in L^1(\mathbb{R}^{d_j}), 1 \leq j
\leq m$.
\end{proposition}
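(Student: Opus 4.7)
I would first unwind the notation by decomposing $x = (y_1, \dots, y_m)$ with $y_i \in \mathbb{R}^{d_i'}$ according to the partition $\mathcal{K}_1, \dots, \mathcal{K}_m$, so that $\Pi_j x = (y_1, \dots, \widehat{y_j}, \dots, y_m) =: \widehat{y}_j$. In these coordinates \eqref{e:Finnerorth} is a block version of the classical Loomis--Whitney inequality (reducing to it when each $d_j' = 1$). Taking each $f_j$ to be the indicator of a product of unit cubes already gives equality in \eqref{e:Finnerorth}, so $\mathrm{BL}(\mathbf{\Pi},\mathbf{p}) \ge 1$; it therefore suffices to establish the upper bound \eqref{e:Finnerorth} itself.

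The plan is to proceed by induction on $m$. The base case $m = 2$ reduces to Fubini and is in fact an equality. For the inductive step, I would first integrate in $y_1$: since $f_1$ is independent of $y_1$ while the remaining $m-1$ factors all depend on $y_1$, H\"older's inequality in $y_1$ applied with exponent $m-1$ to each factor gives
\[
\int_{\mathbb{R}^{d_1'}} \prod_{j=2}^{m} f_j(\widehat y_j)^{1/(m-1)} \, \mathrm{d}y_1 \;\leq\; \prod_{j=2}^{m} g_j(y_2, \dots, \widehat{y_j}, \dots, y_m)^{1/(m-1)},
\]
where $g_j(y_2, \dots, \widehat{y_j}, \dots, y_m) := \int f_j(\widehat y_j) \, \mathrm{d}y_1$ and $\int g_j = \int f_j$. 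What remains is an integral over the $m-1$ block variables $y_2, \dots, y_m$ involving one ``full'' factor $f_1^{1/(m-1)}$ together with $m-1$ ``projection'' factors $g_j^{1/(m-1)}$ ($j = 2, \ldots, m$).

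The second step is to split $f_1$ off via another H\"older application. Rewriting $\prod_{j=2}^{m} g_j^{1/(m-1)} = \bigl(\prod_{j=2}^{m} g_j^{1/(m-2)}\bigr)^{(m-2)/(m-1)}$ and using H\"older with conjugate exponents $m-1$ and $(m-1)/(m-2)$ produces
\[
\int f_1^{1/(m-1)} \prod_{j=2}^{m} g_j^{1/(m-1)} \, \mathrm{d}y_2 \cdots \mathrm{d}y_m \;\leq\; \|f_1\|_1^{1/(m-1)} \Bigl( \int \prod_{j=2}^{m} g_j^{1/(m-2)} \, \mathrm{d}y_2 \cdots \mathrm{d}y_m \Bigr)^{(m-2)/(m-1)}.
\]
The inner integral is now exactly the Finner integrand for the $m-1$ block variables $y_2, \dots, y_m$ and the $m-1$ projection functions $g_2, \dots, g_m$, so the inductive hypothesis bounds it by $\prod_{j=2}^{m} \|g_j\|_1^{1/(m-2)} = \prod_{j=2}^{m} \|f_j\|_1^{1/(m-2)}$. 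Raising to the power $(m-2)/(m-1)$ and combining with the $\|f_1\|_1^{1/(m-1)}$ factor gives exactly $\prod_{j=1}^{m} \|f_j\|_1^{1/(m-1)}$.

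The only potentially awkward point is the exponent bookkeeping at the second H\"older step: the identity $\tfrac{1}{m-1} = \tfrac{1}{m-2} \cdot \tfrac{m-2}{m-1}$ is precisely what reconciles the exponent $1/(m-1)$ produced by the first H\"older step with the exponent $1/(m-2)$ required to invoke the inductive hypothesis. Everything else is a routine application of H\"older and Fubini.
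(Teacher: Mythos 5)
Your argument is correct and is essentially the standard Finner proof that the paper itself invokes: the paper does not reprove Proposition \ref{p:Finnerorth} but states that ``the standard proof proceeds via the multilinear H\"older inequality and induction'' and cites \cite{Finner}, which is precisely the two-step H\"older-plus-induction-on-$m$ scheme you describe. The exponent bookkeeping at the second H\"older step ($\tfrac{1}{m-1} = \tfrac{1}{m-2}\cdot\tfrac{m-2}{m-1}$) is right, the base case $m=2$ is Fubini, and the unit-cube example correctly yields $\mathrm{BL}(\mathbf{\Pi},\mathbf{p})\ge 1$, so the statement $\mathrm{BL}(\mathbf{\Pi},\mathbf{p})=1$ follows.
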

Proposition \ref{p:Finnerorth} follows from work of Finner
\cite{Finner} where a stronger result was established for
$\mathbf{\Pi}$ consisting of more general coordinate projections and
in the broader setting of product measure spaces. In particular,
this includes the discrete inequality
\begin{equation} \label{e:discreteFinner}
\sum_{n \in \mathbb{N}^{d}} \prod_{j=1}^m f_j(\Pi_jn)^{\frac{1}{m-1}}
\leq \prod_{j=1}^m \bigg( \sum_{\ell \in \mathbb{N}^{d_j}} f_j(\ell) \bigg)^{\frac{1}{m-1}}
\end{equation}
which holds for all nonnegative $f_j \in \ell^1(\mathbb{N}^{d_j}), 1
\leq j \leq m$. We mention this case specifically as it will be
important later in the paper.

We remark that \eqref{e:Finnerorth} is a generalisation of the
classical Loomis--Whitney inequality \cite{LW} whereby $m=d$ and
$\mathcal{K}_j=\{j\}$ for $1\leq j\leq d$.

In order for $\mbox{BL}(\mathbf{\Pi},\mathbf{p})$ to be finite it is
necessary that $\textbf{p} =
(\tfrac{1}{m-1},\ldots,\tfrac{1}{m-1})$, and this follows by a
straightforward scaling argument.

The standard proof of Proposition \ref{p:Finnerorth} proceeds via
the multilinear H\"older inequality and induction (see
\cite{Finner}). This proof and, to the best of our knowledge, other
established proofs of Proposition \ref{p:Finnerorth} rely heavily on
the linearity of the $\Pi_j$ and break down completely in the
nonlinear setting.

Since we would like to state our main theorem regarding nonlinear
$B_j$ in a diffeomorphism-invariant way, it is appropriate that we
first formulate an affine-invariant version of Proposition
\ref{p:Finnerorth}. In order to state this it is natural to use
language from exterior algebra; the relevant concepts and
terminology can be found in standard texts such as \cite{Darling}.
In particular, $\Lambda^n(\mathbb{R}^d)$ will denote the $n$th
exterior algebra of $\mathbb{R}^d$ and $\star :
\Lambda^n(\mathbb{R}^d) \rightarrow \Lambda^{d-n}(\mathbb{R}^d)$
will denote the Hodge star operator. (It is worth pointing out here
that if the reader is prepared to sacrifice the explicit
diffeomorphism-invariance that we seek, then they may effectively
dispense with these exterior algebraic considerations.) Given
$(\mathbf{B},\mathbf{p}) \in \mathcal{C}$ define
$X_j(B_j)\in\Lambda^{d_j}(\mathbb{R}^d)$ to be the wedge product of
the rows of the $d_j\times d$ matrix $B_j$. By \eqref{e:directsum}
it follows that
\begin{equation} \label{e:transversalityHodge}
\star\bigwedge_{j=1}^m\star X_j(B_j)\in\mathbb{R}\backslash\{0\}.
\end{equation}
The quantity in \eqref{e:transversalityHodge} is a certain
determinant and should be viewed as a means of quantifying the
transverality hypothesis \eqref{e:directsum}.
\begin{proposition}\label{p:linearcase}
If $(\mathbf{B},\mathbf{p}) \in \mathcal{C}$ then
$$
\emph{BL}(\mathbf{B},\mathbf{p}) = \left|\star\bigwedge_{j=1}^m\star
X_j(B_j)\right|^{-\frac{1}{m-1}},
$$
and thus
\begin{equation}\label{linadmis}
\int_{\mathbb{R}^d}\prod_{j=1}^m f_j(B_j x)^{\frac{1}{m-1}}\,\mathrm{d}x\leq
\left|\star\bigwedge_{j=1}^m\star
X_j(B_j)\right|^{-\frac{1}{m-1}}\prod_{j=1}^m\left(\int_{\mathbb{R}^{d_j}}f_j\right)^{\frac{1}{m-1}}
\end{equation}
for all nonnegative $f_j\in L^1(\mathbb{R}^{d_j})$, $1\leq j\leq m$.
\end{proposition}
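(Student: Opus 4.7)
My plan is to reduce, via the equivalence relation on Brascamp--Lieb data, to the coordinate-projection case covered by Proposition~\ref{p:Finnerorth}, and then match the resulting sharp constant with the exterior-algebraic expression in the statement.

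For the reduction, the discussion preceding the proposition shows that every $(\mathbf{B},\mathbf{p}) \in \mathcal{C}$ is equivalent to $(\mathbf{\Pi},\mathbf{p})$; that is, there exist invertible linear maps $C : \mathbb{R}^d \to \mathbb{R}^d$ and $C_j : \mathbb{R}^{d_j} \to \mathbb{R}^{d_j}$ with $B_j = C_j\Pi_jC^{-1}$. Combining $\mbox{BL}(\mathbf{\Pi},\mathbf{p}) = 1$ (Proposition~\ref{p:Finnerorth}) with the scaling formula recalled in the introduction immediately yields
\[
\mbox{BL}(\mathbf{B},\mathbf{p}) \;=\; \frac{|\det C|}{\prod_{j=1}^m|\det C_j|^{1/(m-1)}}.
\]
It then remains to verify the exterior-algebraic identity $|\star\bigwedge_j\star X_j(B_j)| = \prod_{j=1}^m|\det C_j|/|\det C|^{m-1}$, as taking the $-1/(m-1)$-th power of both sides recovers the claimed formula for $\mbox{BL}(\mathbf{B},\mathbf{p})$.

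I would establish this identity by tracking how $\star\bigwedge_j\star X_j(B_j)$ transforms under the equivalence $B_j = C_j\Pi_jC^{-1}$, starting from the base case $B_j = \Pi_j$: here $\star X_j(\Pi_j) = \pm\bigwedge_{k\in\mathcal{K}_j}e_k$, and since the $\mathcal{K}_j$ partition $\{1,\ldots,d\}$ the wedge over $j$ is $\pm e_1\wedge\cdots\wedge e_d$ with outer $\star$ equal to $\pm 1$. The two relevant transformation rules are: left multiplication $B_j\mapsto C_jB_j$ scales $X_j(B_j)$ by $\det C_j$, by multilinearity of the wedge in the rows of $B_j$; and right multiplication $B_j\mapsto B_jC^{-1}$ transforms $X_j(B_j)\in\Lambda^{d_j}(\mathbb{R}^d)$ by the induced action of $(C^{-1})^T$, which by the standard identity $\star(A\omega) = \det(A)\,A^{-T}(\star\omega)$ becomes $\det(C^{-1})$ times the induced action of $C$ on $\star X_j(B_j)\in\Lambda^{d-d_j}(\mathbb{R}^d)$.

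The step requiring most care is combining these two rules in the right order. After applying them to each of the $m$ terms $\star X_j(B_j)$, one collects an overall scalar factor of $\prod_{j=1}^m\det C_j \cdot (\det C^{-1})^m$ together with the induced action of $C$ on each $\Lambda^{d-d_j}(\mathbb{R}^d)$. Wedging over $j$, the transversality condition $\sum_j(d-d_j) = d$ puts the result in the top exterior power $\Lambda^d(\mathbb{R}^d)$, on which $C$ acts by $\det C$; this absorbs exactly one of the $m$ factors of $\det C^{-1}$, leaving the net factor $(\det C^{-1})^{m-1}$. The outer Hodge star then converts this top form into the required scalar, and taking absolute values yields the identity, completing the proof.
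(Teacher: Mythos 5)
Your proposal is correct and follows the same overall route as the paper's Appendix~\ref{appendix:reduction}: reduce to the coordinate-projection datum $(\mathbf{\Pi},\mathbf{p})$ via intertwining maps, invoke Proposition~\ref{p:Finnerorth}, and match the resulting constant with the exterior-algebraic expression. Where you diverge is in how the constant identity is verified. The paper builds the intertwining matrix $A$ explicitly from orthonormal bases $\{a_k : k\in\mathcal{K}_j\}$ for the kernels $\ker B_j$, uses orthonormality to write $\star X_j(B_j) = \|X_j(B_j)\|_{\Lambda^{d_j}}\bigwedge_{k\in\mathcal{K}_j}a_k$, and then computes $|\det A|$ and $|\det C_j|$ in those terms. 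You instead track how the scalar $\star\bigwedge_j\star X_j(B_j)$ transforms under arbitrary intertwining maps, combining the multilinearity-in-rows identity $X_j(C_jB_j)=\det(C_j)X_j(B_j)$ with the Hodge-star naturality law $\star\circ\Lambda^k(M) = \det(M)\,\Lambda^{d-k}(M^{-T})\circ\star$, and only then evaluate at the base case $B_j=\Pi_j$. Your version is marginally cleaner in two respects: it avoids choosing orthonormal bases, and, by running the argument through the scaling formula for $\mbox{BL}$ rather than through a direct change of variables in the integral, it delivers the claimed \emph{equality} $\mbox{BL}(\mathbf{B},\mathbf{p}) = |\star\bigwedge_j\star X_j(B_j)|^{-1/(m-1)}$ immediately, whereas the appendix as written literally establishes only the inequality \eqref{linadmis} (sharpness being implicit from the exactness of the change of variables). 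Both arguments are sound and both ultimately rest on the same linear algebra; yours is a more invariant packaging of the same computation.
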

One may reduce Proposition \ref{p:linearcase} to Proposition
\ref{p:Finnerorth} by appropriate linear changes of variables; see
Appendix \ref{appendix:reduction} for full details of this argument
which will be of further use in Section \ref{section:canonical} for
the nonlinear case.

Since the inequality \eqref{linadmis} is affine-invariant, one
should expect it to have a diffeomorphism-invariant nonlinear
version. This is our main result with regard to nonlinear
generalisations of Brascamp--Lieb inequalities.
\begin{theorem}\label{t:main} Let $\beta, \varepsilon, \kappa  >0$ be given. Suppose
that $B_j:\mathbb{R}^d\rightarrow\mathbb{R}^{d_j}$ is a
$C^{1,\beta}$ submersion satisfying
$\|B_j\|_{C^{1,\beta}}\leq\kappa$ in a neighbourhood of a point
$x_0\in\mathbb{R}^d$ for each $1\leq j\leq m$. Suppose further that
\begin{equation} \label{e:directsumnonlin}
\bigoplus_{j=1}^m \ker \mathrm{d}B_j(x_0)=\mathbb{R}^d
\end{equation}
and
$$
\left|\star\bigwedge_{j=1}^m\star X_j(\mathrm{d}B_j(x_0))\right| \geq \varepsilon.
$$
Then there exists a neighbourhood $U$ of $x_0$ depending on at most
$\beta, \varepsilon, \kappa$ and $d$, such that for all cutoff
functions $\psi$ supported in $U$, there is a constant $C$ depending
only on $d$ and $\psi$ such that
\begin{equation}\label{linadmisnl}
\int_{\mathbb{R}^d}\prod_{j=1}^m f_j(B_j (x))^{\frac{1}{m-1}}\psi(x)\,\mathrm{d}x\leq
C\varepsilon^{-\frac{1}{m-1}}\prod_{j=1}^m\left(\int_{\mathbb{R}^{d_j}}f_j\right)^{\frac{1}{m-1}}
\end{equation}
for all nonnegative $f_j\in L^1(\mathbb{R}^{d_j})$, $1\leq j\leq m$.
\end{theorem}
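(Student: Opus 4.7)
The plan is to follow an induction-on-scales scheme in the spirit of Bejenaru--Herr--Tataru \cite{BHT}, with the discrete Finner inequality \eqref{e:discreteFinner} supplying the combinatorial glue at each scale. First I would normalise: exploiting the fact that the desired inequality transforms covariantly under composition with invertible linear maps (as described just before Proposition \ref{p:linearcase}), the reduction outlined in Appendix \ref{appendix:reduction}, applied pointwise at $x_0$, allows us to assume $x_0 = 0$ and $\mathrm{d}B_j(0) = \Pi_j$. The factor $\varepsilon^{-1/(m-1)}$ in the conclusion emerges here as the determinant loss incurred by this reduction, exactly as in the purely linear Proposition \ref{p:linearcase}.

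Next I would set up the induction. For $\delta > 0$ small relative to the $C^{1,\beta}$ modulus of $(B_j)$, let $A(\delta)$ denote the supremum of
$$
\frac{\int_{Q_\delta(y)} \prod_{j=1}^m f_j(B_j(x))^{1/(m-1)}\,\mathrm{d}x}{\prod_{j=1}^m \left(\int f_j\right)^{1/(m-1)}}
$$
over all cubes $Q_\delta(y)$ of side $\delta$ in a fixed neighbourhood of $0$, all admissible $(B_j)$ with $\mathrm{d}B_j$ uniformly close to $\Pi_j$ on $Q_\delta(y)$, and all nonnegative $(f_j)$. The target is the recursive estimate
$$
A(\delta) \leq (1 + C\delta^\beta)\, A(\delta/2).
$$
Iterating this telescopes to a uniform bound $A(\delta_0) \lesssim 1$, since $\prod_{k \geq 0}\bigl(1 + C(\delta_0 2^{-k})^\beta\bigr) < \infty$ whenever $\beta > 0$; this convergence is precisely where the $C^{1,\beta}$ (rather than merely $C^1$) hypothesis is essential.

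To establish the recursive step, partition $Q_\delta(y)$ into $2^d$ sub-cubes $Q_\nu$ of side $\delta/2$ indexed by $\nu \in \{0,1\}^d$. On each sub-cube the definition of $A(\delta/2)$ gives
$$
\int_{Q_\nu} \prod_j f_j(B_j(x))^{1/(m-1)}\,\mathrm{d}x \leq A(\delta/2) \prod_j \left(\int_{B_j(Q_\nu)} f_j\right)^{1/(m-1)}.
$$
Because $\mathrm{d}B_j$ is $C^\beta$-close to $\Pi_j$, the image $B_j(Q_\nu)$ is a nearly axis-aligned cube in $\mathbb{R}^{d_j}$ whose position is determined, up to $O(\delta^{1+\beta})$ displacement, by the reduced index $\Pi_j\nu \in \{0,1\}^{d_j}$. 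Introducing model quantities $F_j(\ell) := \int_{R_j(\ell)} f_j$ on a fixed grid $\{R_j(\ell)\}_{\ell \in \{0,1\}^{d_j}}$ of cubes in $\mathbb{R}^{d_j}$ that covers the images up to bounded overlap, and applying the discrete Finner inequality \eqref{e:discreteFinner} to $\sum_\nu \prod_j F_j(\Pi_j\nu)^{1/(m-1)}$, yields the bound $A(\delta/2)\prod_j(\int f_j)^{1/(m-1)}$, multiplied by $1 + O(\delta^\beta)$ absorbing the overlap and displacement errors.

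The principal obstacle I anticipate lies in this last step: replacing $B_j(Q_\nu)$ by $R_j(\Pi_j\nu)$ naively produces an $O(1)$ multiplicative overlap at every scale, which fails to telescope. Achieving an $O(\delta^\beta)$ error requires choosing the model cubes $R_j(\ell)$ adapted to the true images---for instance, by performing, on each sub-cube, a $\delta^\beta$-small perturbation of the coordinate system on $\mathbb{R}^{d_j}$ that absorbs the deviation of $\mathrm{d}B_j$ from $\Pi_j$---and carefully tracking the resulting Jacobians through the discrete inequality. Once this accounting is in place, the recursion closes and the neighbourhood $U$ in the statement may be taken to be any cube on which the $C^{1,\beta}$ perturbation of $(B_j)$ from the model is uniformly small, a condition depending only on $\beta$, $\varepsilon$, $\kappa$ and $d$, as claimed.
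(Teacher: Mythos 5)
Your proposal correctly identifies the overall architecture of the paper's argument (normalise to $\mathrm{d}B_j(x_0)=\Pi_j$, run an induction on scales, use the discrete Finner inequality \eqref{e:discreteFinner} at each step), and you even put your finger on the crux: the naive overlap of the images $B_j(Q_\nu)$ fails to telescope. However, the fix you propose does not work, and two further structural ingredients are missing.

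First, a ``$\delta^\beta$-small perturbation of the coordinate system that absorbs the deviation of $\mathrm{d}B_j$ from $\Pi_j$'' does not address the overlap. The overlap is caused by the \emph{nonlinearity} of $B_j$ at scale $\delta$ (the $O(\delta^{1+\beta})$ deviation of $B_j$ from its affine approximation at the centre of the cube), not by the linear deviation of $\mathrm{d}B_j$ from $\Pi_j$. Even after choosing coordinates perfectly adapted to $\mathrm{d}B_j(x_Q)$, adjacent images $B_j(Q_\nu)$ still overlap in slabs of thickness $\sim\delta^{1+\beta}$. More seriously, the $f_j$ can concentrate all of their mass on exactly those slabs, so the multiplicative error from overlap is $O(1)$, not $O(\delta^\beta)$, regardless of how you position the grid in $\mathbb{R}^{d_j}$. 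The paper's resolution is qualitatively different: decompose $Q$ (of side $\delta$) into parallelepipeds of side $\sim\delta^{\alpha_0}$ separated by buffer zones of thickness $\sim\delta^{\alpha_1}$ with $1<\alpha_0<\alpha_1<1+\beta$, where the outer inequality guarantees the buffer zones swallow the $O(\delta^{1+\beta})$ nonlinear displacement (giving genuine disjointness of the images via Proposition \ref{p:disjoint}), and the inner inequality gives $\sim\delta^{\alpha_0-\alpha_1}$ candidate locations per cut, so that a \emph{pigeonhole} argument can place the buffers where at most an $O(\delta^{\alpha_1-\alpha_0})$ fraction of the $f_j$ mass lies. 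Your dyadic split into $2^d$ sub-cubes of side $\delta/2$ admits only one cut per direction, so no pigeonholing is possible, and the overlap cannot be avoided.

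Second, the induction has no base: your quantity $A(\delta)$ is defined as a supremum over all nonnegative $f_j\in L^1$ and is not a priori finite, so the telescoping product gives you nothing. The paper works with the class $L^1_M(\mathbb{R}^{d_j})$ of functions essentially constant at scale $M^{-1}$, for which a base-case bound $C(\delta_0/2^N,M)\leq 10^d$ holds once $N$ is large enough (depending on $M$) that the nonlinear $B_j$ is indistinguishable from its linearisation at that scale; the $M$-dependence is then removed by density. Without some version of this truncation the scheme never gets off the ground. Combined, these are not polish items but the core of the proof: you need the decomposition into tiles of size $\delta^{\alpha_0}$ (strictly smaller than $\delta/2$) so there is room to insert and pigeonhole buffer zones, and you need a base case.
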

Inequality \eqref{linadmisnl} may be interpreted as a multilinear
``Radon-like" transform estimate. This is made explicit in the
following corollary, upon which our applications in Section
\ref{section:applications} depend.
\begin{corollary}\label{lemma6}
Let $\beta, \varepsilon, \kappa > 0$ be given. If
$F:(\mathbb{R}^{d-1})^{d-1}\rightarrow\mathbb{R}$ is such that
$\|F\|_{C^{1,\beta}} \leq \kappa$ and
$$
|\det(\nabla_{u_1}F(0),\ldots,\nabla_{u_{d-1}}F(0))| \geq
\varepsilon,$$ then there exists a neighbourhood $V$ of the origin
in $(\mathbb{R}^{d-1})^{d-1}$, depending only on $\beta,
\varepsilon,\kappa$ and $d$, and a constant $C$ depending only on
$d$, such that
\begin{equation}\label{lemma6est}
\int_{V}f_1(u_1)\cdots f_{d-1}(u_{d-1})f_d(u_1+\cdots+u_{d-1})\delta(F(u))\,\mathrm{d}u \leq C\varepsilon^{-\frac{1}{d-1}}\prod_{j=1}^d\|f_j\|_{(d-1)'}
\end{equation}
for all nonnegative $f_j \in L^{(d-1)'}(\mathbb{R}^{d-1})$, $1 \leq
j \leq m$.
\end{corollary}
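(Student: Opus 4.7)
I would reduce the corollary to an application of Theorem~\ref{t:main} by pulling the integral back to the level surface $S=\{F=0\}$. Hadamard's inequality applied to $|\det(\nabla_{u_1}F(0),\ldots,\nabla_{u_{d-1}}F(0))|\geq\varepsilon$ gives some partial $\partial F/\partial u_{j^\star,k^\star}(0)$ of modulus $\gtrsim\varepsilon^{1/(d-1)}$; the $C^{1,\beta}$ hypothesis propagates this to a neighbourhood $V$ whose size depends only on $\beta,\varepsilon,\kappa,d$. Solving $F=0$ locally for $u_{j^\star,k^\star}$ produces a $C^{1,\beta}$ chart $\iota:W\hookrightarrow S\cap V$ with $W\subset\mathbb{R}^{(d-1)^2-1}$, and the coarea formula rewrites $\delta(F(u))\,\mathrm{d}u=|\partial F/\partial u_{j^\star,k^\star}|^{-1}\,\mathrm{d}\omega$, which is the source of the $\varepsilon^{-1/(d-1)}$ loss.

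Under this parametrisation the integrand becomes $\prod_{j=1}^d f_j(\widetilde{B}_j(\omega))$, where $\widetilde{B}_j=B_j\circ\iota$ with $B_j(u)=u_j$ for $j<d$ and $B_d(u)=u_1+\cdots+u_{d-1}$. In the special case $d=3$ the two coincidences $\dim W=3=d$ and $(d-1)'=2=d-1$ hold, so after setting $g_j=f_j^{d-1}$ the Brascamp--Lieb datum $((\mathrm{d}\widetilde{B}_j(0)),(1/(m-1)))$ is precisely a Loomis--Whitney datum in $\mathcal{C}$. A direct Hodge--star calculation identifies the transversality quantity as $|\det(\nabla_{u_j}F(0))|/|\partial F/\partial u_{j^\star,k^\star}|^2$; applying Theorem~\ref{t:main} then yields a Brascamp--Lieb constant that combines with the Jacobian $1/|\partial F/\partial u_{j^\star,k^\star}|$ so that the powers of $|\partial F/\partial u_{j^\star,k^\star}|$ cancel exactly, leaving the clean bound $|\det(\nabla_{u_j}F(0))|^{-1/(d-1)}\prod_j\|f_j\|_{(d-1)'}\leq\varepsilon^{-1/(d-1)}\prod_j\|f_j\|_{(d-1)'}$.

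The main obstacle is the case $d>3$, where both coincidences fail: $\dim W=(d-1)^2-1>d$, and the scaling-correct exponent on $S$ is $1/(d-1)'=(d-2)/(d-1)$ rather than $1/(m-1)=1/(d-1)$. Indeed, $\bigwedge_{j=1}^d\star X_j(\mathrm{d}\widetilde{B}_j(0))$ has degree $d[(d-1)(d-2)-1]$, which exceeds $\dim W$ for $d\geq 4$, so this wedge product vanishes identically and the reduced datum falls outside $\mathcal{C}$. A natural fix is to foliate $W$ into $d$-dimensional affine leaves along which the restricted projections do form a Loomis--Whitney datum, apply Theorem~\ref{t:main} leaf-wise to obtain an $L^{d-1}$ bound, integrate the $d(d-3)$ transverse coordinates via Fubini, and then interpolate (multilinear Riesz--Thorin at $\theta=d(d-3)/(d^2-3d+1)$) against the unconstrained Young bound $\int\prod_j f_j(B_ju)\,\mathrm{d}u\lesssim\prod_j\|f_j\|_{d/(d-1)}$ to reach the symmetric $L^{(d-1)'}$ estimate. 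The delicate point in this strategy is arranging the foliation compatibly with all $d$ projections simultaneously and keeping the $\varepsilon^{-1/(d-1)}$ loss uniform through the interpolation.
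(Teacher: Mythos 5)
Your implicit-function-theorem reduction and your treatment of $d=3$ match the paper's, and you have correctly diagnosed the obstruction for $d\geq 4$: the reduced maps $\widetilde B_j$ on the parametrised level surface $W\subset\mathbb{R}^{d(d-2)}$ have kernels too large for the direct-sum hypothesis \eqref{e:directsumnonlin}, so the reduced datum falls outside $\mathcal{C}$. However, your proposed repair for $d\geq 4$ has a genuine gap, and it is not the route the paper takes.

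The foliation/Fubini/interpolation scheme does not survive scrutiny. On a generic $d$-dimensional affine leaf the restricted $B_j$ is still a local surjection onto $\mathbb{R}^{d-1}$, so the leaf-wise Loomis--Whitney estimate coming from Theorem \ref{t:main} necessarily carries the \emph{full} quantities $\int_{\mathbb{R}^{d-1}}f_j$ on its right-hand side; integrating over the $d(d-3)$ transverse variables then yields a constant times $\prod_j\|f_j\|_{d-1}$ after the substitution $f_j\mapsto f_j^{d-1}$. For $d\geq 4$ one has $(d-1)'<d-1$, so on the compactly supported inputs at hand this is a strictly \emph{weaker} bound than the target $\prod_j\|f_j\|_{(d-1)'}$, and you cannot interpolate back to a stronger endpoint from a weaker one. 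Independently, the two estimates you propose to interpolate between have different left-hand sides (one integrates over the level surface, the other over all of $(\mathbb{R}^{d-1})^{d-1}$), so they do not sit in a common Riesz--Thorin framework. Finally, it is not clear how to arrange the foliation so that \emph{all} $d$ restricted kernels are simultaneously one-dimensional on each leaf in a way that is stable under the nonlinear perturbation, which you yourself flag as unresolved.

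The paper instead deploys the tensoring device illustrated in the introduction with the $\mathbb{R}^5\to\mathbb{R}^2$ example. For each $j$ it chooses a $(d-2)$-tuple $S^{(j)}$ of indices (e.g.\ $(1,\dots,d)$ with $j$ and $j{+}1\ (\mathrm{mod}\ d)$ deleted), so that each $k\in\{1,\dots,d\}$ appears exactly $d-2$ times across all the $S^{(j)}$, and sets
\begin{equation*}
B_j^\oplus=(B_{S^{(j)}_1},\dots,B_{S^{(j)}_{d-2}}):\mathbb{R}^{d(d-2)}\to\mathbb{R}^{(d-1)(d-2)}.
\end{equation*}
Each $\ker\mathrm{d}B_j^\oplus(0)$ then has dimension $d-2$ and the $d$ kernels direct-sum to $\mathbb{R}^{d(d-2)}$, so Theorem \ref{t:main} applies with $m=d$ and exponent $\tfrac{1}{d-1}$. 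Feeding in the tensor inputs $f_j^\otimes=\bigotimes_{l=1}^{d-2}f_{S^{(j)}_l}^{1/(d-2)}$ reproduces the original product $\prod_jf_j\circ B_j$ on the left and, by the counting property of the $S^{(j)}$, precisely the norms $\prod_j\|f_j\|_{(d-1)'}$ on the right; the $\varepsilon^{-1/(d-1)}$ factor then emerges from the Brascamp--Lieb constant by a change of variables, essentially as in your $d=3$ argument.
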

The case $d=3$ of Corollary \ref{lemma6} was proved in \cite{BCW} as
a consequence of the nonlinear Loomis--Whitney inequality.

It is perhaps interesting to view Corollary \ref{lemma6} in the
light of the theory of multilinear weighted convolution inequalities
for $L^2$ functions developed in \cite{taoweighted}. Inequality
\eqref{lemma6est} is an example of such a convolution inequality in
an $L^p$ setting and with a singular (distributional) weight.

We conclude this section with a number of remarks on Theorem
\ref{t:main}.

As in the reduction of Proposition \ref{p:linearcase} to Proposition
\ref{p:Finnerorth}, a linear change of variables argument shows that
Theorem \ref{t:main} may be reduced to the case where each linear
mapping $\mathrm{d}B_j(x_0)$ is equal to the coordinate projection
$\Pi_j$ given by \eqref{e:Pijdefn}, in which case
$$
\star \bigwedge_{j=1}^m \star X_j(\mathrm{d}B_j(x_0)) = 1.
$$
Although this reduction is not essential, it does lead to some
conceptual and notational simplification in the subsequent analysis.
The details of this reduction may be found in Section
\ref{section:canonical}.

The core component of the proof of Theorem \ref{t:main} that we
present is based on \cite{BHT} and uses the idea of
induction-on-scales. This approach provides additional information
about the sizes of the neighbourhood $U$ and constant $C$ appearing
in its statement; see Section \ref{section:canonical} for further
details of this. In Section \ref{section:C0} we offer an explanation
of why the induction-on-scales approach is natural in the context of
Brascamp--Lieb inequalities and why the class $\mathcal{C}$ given in
\eqref{e:C0} is a natural class for consideration. In Section
\ref{section:proofidea}, we provide an outline of the proof of
Theorem \ref{t:main} which should guide the reader through the full
proof which is contained in Sections \ref{section:canonical} and
\ref{section:induction}.

In the case where $d_j=d-1$ for all $j$, Theorem \ref{t:main}
reduces to the nonlinear Loomis--Whitney inequality in \cite{BCW}
except that the stronger hypothesis $B_j\in C^3$ is assumed in
\cite{BCW}. The proof of the result in \cite{BCW} is quite different
from the proof we give here, and is based on the so-called method of
refinements of M. Christ \cite{Ch}.
We make some further remarks on the role of the smoothness of the
mappings $B_j$ at the end of Section \ref{section:induction}.

The condition \eqref{e:directsumnonlin} is somewhat less restrictive
than it may appear. For example, consider smooth mappings
$B_j:\mathbb{R}^5\rightarrow\mathbb{R}^2$ satisfying
$$
\ker \mathrm{d}B_j(x_0)=\langle\{e_j,e_{(j+1)\text{mod}\, 5}, e_{(j+2)\text{mod}\, 5}\}\rangle
$$
for each $1\leq j\leq 5$, where $e_j$ denotes the $j$th standard
basis vector in $\mathbb{R}^5$. Evidently the condition
\eqref{e:directsumnonlin} is not satisfied. However we may write
$$
\prod_{j=1}^5(f_j\circ B_j)^{1/2}=\prod_{j=1}^5(\widetilde{f}_j\circ \widetilde{B}_j)^{1/4},
$$
where $\widetilde{f}_j:=f_j\otimes f_{(j+2) \text{mod}\,
5}:\mathbb{R}^4\rightarrow [0,\infty)$ and
$\widetilde{B}_j:=(B_j,B_{(j+2)\text{mod}\,
5}):\mathbb{R}^5\rightarrow\mathbb{R}^4$. Since $\ker
\mathrm{d}\widetilde{B}_j(x_0)=\langle\{e_{(j+2) \text{mod}\,
5}\}\rangle$ for each $1\leq j\leq 5$, the mappings
$\widetilde{B}_j$ do satisfy the condition
\eqref{e:directsumnonlin}, and so by Theorem \ref{t:main}
\begin{eqnarray*}
\begin{aligned}
\int_{\mathbb{R}^5}\prod_{j=1}^5(f_j\circ B_j)^{1/2}\psi&=\int_{\mathbb{R}^5}\prod_{j=1}^5(\widetilde{f}_j\circ \widetilde{B}_j)^{1/4}\psi\\
& \leq C\prod_{j=1}^5\left(\int_{\mathbb{R}^4}\widetilde{f}_j\right)^{1/4}\\&=C\prod_{j=1}^5\left(\int_{\mathbb{R}^2}f_j\right)^{1/2}.
\end{aligned}
\end{eqnarray*}
Here the cutoff function $\psi$ and constant $C$ are as in the
statement of Theorem \ref{t:main}. This inequality is optimal in the
sense that $\mbox{BL}(\mathrm{d}B_j(x_0)),(p_j))<\infty$ if and only
if $p_1=\cdots=p_5=1/2$ -- see \cite{Finner}. Similar considerations
form an important part of the proof of Corollary \ref{lemma6} in
dimensions $d\geq 4$.

Very recently, Stovall \cite{Stovall} considered inequalities of the
type \eqref{aspiration} for the case $d_j = d-1$ for all $j$ where
one does not necessarily have the transversality hypothesis
\eqref{e:directsumnonlin}. Here, curvature of the fibres of the
$B_j$ plays a crucial role. In \cite{Stovall}, Stovall determined
completely all data $(\mathbf{B},\mathbf{p})$, up to endpoints in
$\mathbf{p}$, for which inequality \eqref{aspiration} holds when
each $B_j : \mathbb{R}^d \rightarrow \mathbb{R}^{d-1}$ is a smooth
submersion.  The work in \cite{Stovall} generalised work of Tao and
Wright \cite{TW} for the bilinear case $m=2$, and both approaches are
based on Christ's method of refinements. It would be interesting to
complete the picture further and understand the case where one does
not necessarily have transversality and each $d_j$ is not
necessarily equal to $d-1$. We do not pursue this matter here.

Given that Theorem \ref{t:main} is a local result it is natural to
ask whether one may obtain global versions based on the assumption
that hypothesis \eqref{e:directsum} holds at every point
$x_0\in\mathbb{R}^d$, possibly with the insertion of a suitable
weight factor. Simple examples show that naive versions, involving
weights which are powers of the quantity
$\star\bigwedge_{j=1}^m\star X_j(\mathrm{d}B_j(x))$ cannot hold; see
\cite{BCW} for an explicit example.

\subsection*{Organisation of the paper} To recap, in the next section
we give some justification for our choice of proof of Theorem
\ref{t:main} and the class $\mathcal{C}$. In Section
\ref{section:proofidea} we give an outline of the proof of Theorem
\ref{t:main} by considering the special case of the nonlinear
Loomis--Whitney inequality in three dimensions. The full proof
begins in Section \ref{section:canonical} where we make the
reduction to the coordinate projection case. The proof for this case
rests on the induction-on-scales argument which appears in Section
\ref{section:induction}. 
In Section \ref{section:corproof} we give a proof of Corollary
\ref{lemma6}, and in Section \ref{section:applications} we provide
applications to two closely related problems in harmonic analysis.

\subsection*{Acknowledgements} The authors would like to express
gratitude to the anonymous referee for their careful reading of the
manuscript and extremely helpful recommendations, and also to Steve
Roper at the University of Glasgow for creating the figures in
Section \ref{section:proofidea}.

\section{Induction-on-scales and the class $\mathcal{C}$} \label{section:C0}
The Brascamp--Lieb inequalities \eqref{bl} possess a certain
self-similar structure that strongly suggests an approach to the
corresponding nonlinear statements by induction-on-scales.
Induction-on-scales arguments have been used with great success in
harmonic analysis in recent years. Very closely related to the
forthcoming discussion is the induction-on-scales approach to the
Fourier restriction and Kakeya conjectures originating in work of
Bourgain \cite{Bo}, and developed further by Wolff \cite{wolffcone}
and Tao \cite{taoparaboloid}; see also the survey article \cite{T}.
This self-similarity manifests itself most elegantly in an
elementary convolution inequality due to Ball \cite{Ball} (see also
\cite{BCCT}), which we now describe.

Let $(\mathbf{B},\mathbf{p})$ be a Brascamp--Lieb datum where each
$B_j$ is linear. Let $\mathbf{f}$ and $\mathbf{f}'$ be two inputs
and we assume, for clarity of exposition, that these inputs are
$L^1$-normalised. For each $x \in \mathbb{R}^d$ and $1\leq j\leq m$
let $g_j^x : \mathbb{R}^{d_j} \rightarrow [0,\infty)$ be given by
$$
g_j^x(y) = f_j(B_jx-y)f'_j(y).
$$
By Fubini's theorem and elementary considerations we have that
\begin{eqnarray*}
\begin{aligned}
\mbox{BL}(\textbf{B},\textbf{p};\textbf{f})\mbox{BL}(\textbf{B},\textbf{p};\textbf{f}')&=
\int_{\mathbb{R}^d}\prod_{j=1}^m(f_j\circ
B_j)^{p_j}*\prod_{j=1}^m(f'_j\circ B_j)^{p_j}\\
&=\int_{\mathbb{R}^d}\left(\int_{\mathbb{R}^d}\prod_{j=1}^m(g_j^x\circ
B_j)^{p_j}\right)\mathrm{d}x\\
&\leq \int_{\mathbb{R}^d}\left(\mbox{BL}(\textbf{B},\textbf{p};(g_j^x))\prod_{j=1}^m\left(\int_{\mathbb{R}^{d_j}}
g_j^x(y)\,\mathrm{d}y\right)^{p_j}\right)\mathrm{d}x \\
&= \int_{\mathbb{R}^d}\left(\mbox{BL}(\textbf{B},\textbf{p};(g_j^x))\prod_{j=1}^m(f_j*f_j'(B_jx))^{p_j}\right)\mathrm{d}x\\
\end{aligned}
\end{eqnarray*}
and therefore
\begin{equation} \label{e:Ballinequality}
\mbox{BL}(\textbf{B},\textbf{p};\textbf{f})\mbox{BL}(\textbf{B},\textbf{p};\textbf{f}')
\leq \sup_{x\in\mathbb{R}^d}\mbox{BL}(\textbf{B},\textbf{p};(g_j^x))\;\mbox{BL}(\textbf{B},\textbf{p};\textbf{f}*\textbf{f}'),
\end{equation}
where $\mathbf{f}*\mathbf{f}':=(f_j*f_j')$. Notice that if
$\mathbf{f}'$ is an extremiser to \eqref{bl}, i.e.
$$\mbox{BL}(\textbf{B},\textbf{p};\textbf{f}')=\mbox{BL}(\textbf{B},\textbf{p}),$$
then since
$$\mbox{BL}(\textbf{B},\textbf{p};\textbf{f}*\textbf{f}')\leq
\mbox{BL}(\textbf{B},\textbf{p}),$$ we may deduce that
\begin{equation}\label{ball1}
\mbox{BL}(\textbf{B},\textbf{p};\textbf{f})\leq
\sup_{x\in\mathbb{R}^d}\mbox{BL}(\textbf{B},\textbf{p};(g_j^x)).
\end{equation}
In particular, in the presence of an appropriately ``localising"
extremiser $\mathbf{f}'$ (such as of compact support), \eqref{ball1}
suggests the viability of a proof of nonlinear inequalities such as
\eqref{aspiration} by induction on the ``scale of the support" of
$\textbf{f}$. The point is that $g_j^x$ may be thought of as the
function $f_j$ localised by $f_j'$ to a neighbourhood of the general
point $B_jx$.

With the above discussion in mind it is natural to restrict
attention to data $(\mathbf{B},\mathbf{p})$ for which \eqref{bl} has
extremisers of the form $\mathbf{f}=(\chi_{E_j})$, where for each
$j$, $E_j$ is a subset of $\mathbb{R}^{d_j}$ which tiles by
translation. Furthermore, given our aspirations, it is natural to
choose a class of data which is affine-invariant and stable under
linear perturbations of $\mathbf{B}$. These requirements lead us to
the transversality hypothesis in \eqref{e:directsum}. Indeed, as
there are linear changes of variables which show that Proposition
\ref{p:linearcase} follows from Proposition \ref{p:Finnerorth} (see
Appendix \ref{appendix:reduction}), it is straightforward to observe
that characteristic functions of certain paralellepipeds are
extremisers for \eqref{linadmis}. Such sets of course tile by
translation.

We remark that there are other hypotheses on the datum $\mathbf{B}$
which fulfill our requirements. For example, one may replace
\eqref{e:directsum} by
\begin{equation*}
\bigoplus_{j=1}^m \coker B_j=\mathbb{R}^d.
\end{equation*}
However, after appropriate changes of variables, the corresponding
nonlinear inequality \eqref{aspiration} merely reduces to a
statement of Fubini's theorem, and in particular, $p_j=1$ for all
$j$. There are further alternatives which are hybrids of these and
are similarly degenerate.
\begin{remark}
Notice that if $\mathbf{f}'$ is an extremiser to \eqref{bl} then we
may also deduce from \eqref{e:Ballinequality} that
\begin{equation}\label{ball2}
\mbox{BL}(\textbf{B},\textbf{p};\textbf{f})\leq
\mbox{BL}(\textbf{B},\textbf{p};\textbf{f}*\textbf{f}').
\end{equation}
This inequality suggests the viability of a proof of nonlinear
inequalities such as \eqref{aspiration} by induction on the ``scale
of constancy" of $f$. Certain weak versions of inequality
\eqref{aspiration}, where the resulting constant $C$ has a mild
dependence on the smoothness of the input $\mathbf{f}$, have already
been treated in this way in \cite{BCT} (see Remarks 6.3 and 6.6).

In certain situations, \eqref{ball2} leads to the monotonicity of
$\mbox{BL}(\textbf{B},\textbf{p};\textbf{f})$ under the action of
convolution semigroups on the input $\textbf{f}$. In the context of
heat-flow, this observation originates in \cite{CLL} and
\cite{BCCT}; see the latter for further discussion of this
perspective.
\end{remark}

\section{An outline of the proof of Theorem \ref{t:main}}
\label{section:proofidea}

The purpose of this section is to bring out the key ideas in the
proof of Theorem \ref{t:main}. It is also an opportunity to
introduce some notation which will be adopted (modulo small
modifications) in the full proof in Section \ref{section:induction}.
As it is an outline we will sometimes compromise rigour for the sake
of clarity. Our approach is based on \cite{BHT}.

Since the induction-on-scales argument we use to prove Theorem
\ref{t:main} is guided by the underlying geometry, in this outline
we will consider the Loomis--Whitney case where $d=3$, $m=3$ and
\begin{equation} \label{e:LWnormalisation}
\mathrm{d}B_j(x_0) = \Pi_j
\end{equation}
for $j=1,2,3$. In particular, we have $\ker \mathrm{d}B_j(x_0) =
\langle e_j \rangle$ where $e_j$ denotes the $j$th standard basis
vector in $\mathbb{R}^3$.

We shall use $Q(x,\delta)$ to denote the axis-parallel cube centred
at $x$ with sidelength equal to $\delta$.

Fix a small sidelength $\delta_0>0$ which, in terms of the
induction-on-scales argument, represents the largest or ``global"
scale.

For $\delta, M
> 0$ we let $C(\delta,M)$ denote the best constant in the inequality
$$
\int_Q f_1(B_1(x))^{\frac{1}{2}}f_2(B_2(x))^{\frac{1}{2}}f_3(B_3(x))^{\frac{1}{2}}\,\mathrm{d}x \leq C \bigg( \int_{\mathbb{R}^2} f_1 \bigg)^{\frac{1}{2}}
\bigg( \int_{\mathbb{R}^2} f_2 \bigg)^{\frac{1}{2}}
\bigg( \int_{\mathbb{R}^2}  f_3 \bigg)^{\frac{1}{2}}
$$
over all axis-parallel subcubes $Q$ of $Q(x_0,\delta_0)$ of
sidelength $\delta$ and all inputs $f_1, f_2, f_3 \in
L^1(\mathbb{R}^2)$ which are ``constant" at the scale $M^{-1}$. The
goal is to prove that $C(\delta_0,M)$ is bounded above by a constant
independent of $M$, allowing the use of a density argument to pass
to general $f_1, f_2, f_3 \in L^1(\mathbb{R}^2)$.

As our proof proceeds by induction it consists of two distinct
parts.
\begin{enumerate}
\item[(i)] \emph{The base case}: For each $M>0$, $C(\delta,M)$ is bounded by an absolute constant for all $\delta$ sufficiently small.
\item[(ii)] \emph{The inductive step}: There exists $\gamma>0$ and $\alpha>1$ such that \begin{equation}\label{inductionoutline}C(\delta,M)\leq (1+O(\delta^\gamma))C(2\delta^\alpha,M)\end{equation} uniformly in $\delta\leq\delta_0$ and $M>0$.
\end{enumerate}

Claims (i) and (ii) quickly lead to the desired conclusion since on
iterating \eqref{inductionoutline} we find that $C(\delta_0,M)$ is
bounded by a convergent product of factors of the form
$(1+O(\delta^\gamma))$ with $\delta\leq\delta_0$.

To see why the base case is true, let $Q$ be any axis-parallel cube
contained in $Q(x_0,\delta_0)$ with centre $x_Q$ and sidelength
$\delta$, and let $f_1, f_2, f_3 \in L^1(\mathbb{R}^2)$ be constant
at scale $M^{-1}$. Observe that if $\delta$ is sufficiently small
then each $f_j$ does not ``see" the difference between $B_j(x)$ and
$\mathrm{d}B_j(x_Q)x$ for $x \in Q$ in the sense that $f_j\circ
B_j\sim f_j\circ \mathrm{d}B_j(x_Q)$ (up to harmless translations)
on $Q$. Now, by \eqref{e:LWnormalisation} and the smoothness of the
$B_j$ we know that
$$|X_j(\mathrm{d}B_j(x_Q)) -
e_j| = |X_j(\mathrm{d}B_j(x_Q)) - X_j(\Pi_j)| \leq 1/10
$$
if $\delta_0$ is sufficiently small. Hence by Proposition
\ref{p:linearcase} it follows that $C(\delta,M)$ is bounded above by
an absolute constant for such $\delta$.

Turning to the inductive step, fix any axis-parallel cube $Q$
contained in $Q(x_0,\delta_0)$ with centre $x_Q$ and sidelength
$\delta$, and let $f_1, f_2, f_3 \in L^1(\mathbb{R}^2)$ be constant
at scale $M^{-1}$. First we decompose $Q = \bigcup P(n)$, where the
$P(n)$ are axis-parallel subcubes with equal sidelength
$\delta^\alpha$, and $\alpha>1$. We choose the natural indexing of
the $P(n)$ by $n \in \mathbb{N}^3$. Unfortunately this decomposition
is too naive to prove the inductive step but nevertheless it is
instructive to see where the proof breaks down.

Observe that
\begin{align} \label{e:hmm}
& \int_Q f_1(B_1(x))^{\frac{1}{2}}f_2(B_2(x))^{\frac{1}{2}}f_3(B_3(x))^{\frac{1}{2}}\,\mathrm{d}x \notag \\
 = & \sum_{n \in \mathbb{N}^3} \int_{P(n)} f_1(B_1(x))^{\frac{1}{2}}f_2(B_2(x))^{\frac{1}{2}}f_3(B_3(x))^{\frac{1}{2}}\,\mathrm{d}x \notag \\
 \leq & C(\delta^\alpha,M) \sum_{n \in \mathbb{N}^3} \bigg( \int_{B_1(P(n))} f_1 \bigg)^{\frac{1}{2}} \bigg( \int_{B_2(P(n))} f_2 \bigg)^{\frac{1}{2}}
\bigg( \int_{B_3(P(n))} f_3 \bigg)^{\frac{1}{2}}.
\end{align}
If $n = (n_1,n_2,n_3)$ then $\int_{B_1(P(n))} f_1$ is ``almost" a
function of $n_2$ and $n_3$. Indeed, if $B_1$ is linear and equal to
$\Pi_1$ then
$$
B_1(P(n)) = B_1(T_1(n_2,n_3))
$$
where $T_1(n_2,n_3)$ is a cuboid (or ``tube") with long side in the
direction of $e_1$ and containing $P(n)$. A similar remark holds for
$\int_{B_2(P(n))} f_2$ and $\int_{B_3(P(n))} f_3$.

For $j=1,2,3$ this leads us to define cuboids
$$
T_j(\ell) = \bigcup_{\substack{n \in \mathbb{N}^3:\\ \Pi_jn = \ell}} P(n)
$$
for $\ell \in \mathbb{N}^2$. Note that $T_j(\ell)$ has direction
$e_j$ and its location is determined by $\ell \in \mathbb{N}^2$. In
particular, for each $n \in \mathbb{N}^3$, $T_j(\Pi_jn)$ is a cuboid
in the direction $e_j$ which passes through $P(n)$. See Figure
\ref{fig1}.
\begin{figure}
\centering
\psset{unit=1.35cm} \psset{THETA=-45,PHI=30,Dobs=15,Decran=15}
\begin{pspicture}(-1,-1)(5,4)

  \pNodeThreeD(1,0,0){A}
  \pNodeThreeD(2,0,0){B}
  \pNodeThreeD(2,1,0){C}
  \pNodeThreeD(1,1,0){D}

  \pNodeThreeD(1,0,1){E}
  \pNodeThreeD(2,0,1){F}
  \pNodeThreeD(2,1,1){G}
  \pNodeThreeD(1,1,1){H}

  \pNodeThreeD(1,0,2){I}
  \pNodeThreeD(2,0,2){J}
  \pNodeThreeD(2,1,2){K}
  \pNodeThreeD(1,1,2){L}

  \pNodeThreeD(1,0,3){M}
  \pNodeThreeD(2,0,3){N}
  \pNodeThreeD(2,1,3){O}
  \pNodeThreeD(1,1,3){P}

  \newrgbcolor{lg}{0.7 0.7 0.7}

  \pspolygon[fillstyle=solid,fillcolor=lg](A)(E)(F)(B)
  \pspolygon[fillstyle=solid,fillcolor=lg](B)(F)(G)(C)
  \pspolygon[fillstyle=solid,fillcolor=lg](E)(I)(J)(F)
  \pspolygon[fillstyle=solid,fillcolor=lg](F)(J)(K)(G)
  \pspolygon[fillstyle=solid,fillcolor=lg](I)(M)(N)(J)
  \pspolygon[fillstyle=solid,fillcolor=lg](J)(N)(O)(K)
  \pspolygon[fillstyle=solid,fillcolor=lg](M)(N)(O)(P)

  \pNodeThreeD(2,2,2){A}
  \pNodeThreeD(3,2,2){B}
  \pNodeThreeD(3,3,2){C}
  \pNodeThreeD(2,3,2){D}

  \pNodeThreeD(2,2,3){E}
  \pNodeThreeD(3,2,3){F}
  \pNodeThreeD(3,3,3){G}
  \pNodeThreeD(2,3,3){H}

  \pspolygon[fillstyle=solid,fillcolor=lg](A)(E)(F)(B)
  \pspolygon[fillstyle=solid,fillcolor=lg](B)(F)(G)(C)
  \pspolygon[fillstyle=solid,fillcolor=lg](E)(F)(G)(H)

\multido{\iX=0+1}{4}{%
    \pNodeThreeD(\iX,0,0){S}
    \pNodeThreeD(\iX,0,3){E}
    \psline(S)(E)
}

\multido{\iZ=0+1}{4}{%
    \pNodeThreeD(0,0,\iZ){S}
    \pNodeThreeD(3,0,\iZ){E}
    \psline(S)(E)
}

\multido{\iY=0+1}{4}{%
    \pNodeThreeD(3,\iY,0){S}
    \pNodeThreeD(3,\iY,3){E}
    \psline(S)(E)
}

\multido{\iZ=0+1}{4}{%
    \pNodeThreeD(3,0,\iZ){S}
    \pNodeThreeD(3,3,\iZ){E}
    \psline(S)(E)
}

\multido{\iX=0+1}{4}{%
    \pNodeThreeD(\iX,0,3){S}
    \pNodeThreeD(\iX,3,3){E}
    \psline(S)(E)
}

\multido{\iY=0+1}{4}{%
    \pNodeThreeD(0,\iY,3){S}
    \pNodeThreeD(3,\iY,3){E}
    \psline(S)(E)
}

\pNodeThreeD(0,0,3.5){S} \pNodeThreeD(0,0,4.0){E}
\psline[arrows=->](S)(E) \pNodeThreeD(0,0,3.75){L}
\uput[r](L){$e_3$}

\pNodeThreeD(1.5,0,0.5){E} \pNodeThreeD(1.5,-0.5,0){S}
\psline[arrows=->](S)(E) \uput[l](S){$T_3(\ell)$}

\pNodeThreeD(3,2.5,2.5){E} \pNodeThreeD(3.5,3,2.5){S}
\psline[arrows=->](S)(E) \uput[r](S){$P(n)$}

\end{pspicture}
\vspace{5mm} \caption{Subcubes $P(n)$ parametrised by $n \in
\mathbb{N}^3$ and tubes $T_3(\ell)$ parametrised by $\ell \in
\mathbb{N}^2$ with direction $e_3$.} \label{fig1}
\end{figure}
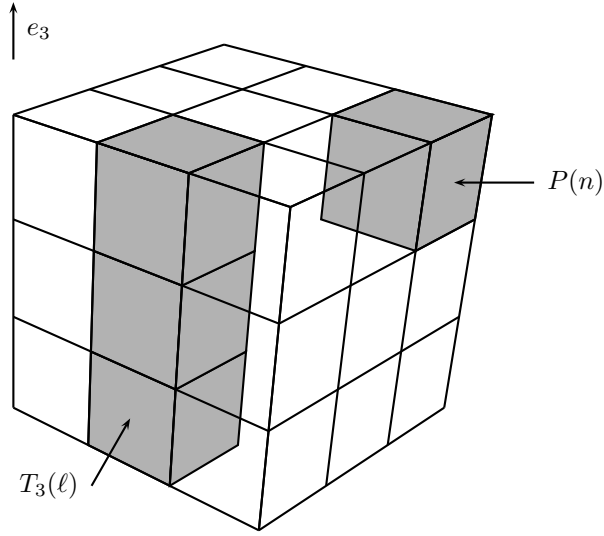
Accordingly, we define
$$
F_j(\ell) = \int_{B_j(T_j(\ell))} f_j
$$
for $j=1,2,3$ and $\ell \in \mathbb{N}^2$. Then by \eqref{e:hmm} and
the discrete inequality \eqref{e:discreteFinner},
\begin{eqnarray*}
\begin{aligned}
\int_Q f_1(B_1(x))^{\frac{1}{2}}f_2(B_2(x))^{\frac{1}{2}}f_3(B_3(x))^{\frac{1}{2}}\,\mathrm{d}x \leq &
C(\delta^\alpha,M) \sum_{n \in \mathbb{N}^3} F_1(\Pi_1n)^{\frac{1}{2}} F_2(\Pi_2n)^{\frac{1}{2}}
F_3(\Pi_3n)^{\frac{1}{2}} \\
\leq & C(\delta^\alpha,M) \|F_1\|^{\frac{1}{2}}_{\ell^1(\mathbb{N}^2)}
\|F_2\|_{\ell^1(\mathbb{N}^2)}^{\frac{1}{2}} \|F_3\|_{\ell^1(\mathbb{N}^2)}^{\frac{1}{2}}.
\end{aligned}
\end{eqnarray*}
If we had disjointness in the sense that
\begin{equation} \label{e:LWdisjoint}
B_j(T_j(\ell)) \cap B_j(T_j(\ell')) = \emptyset \qquad \text{whenever}\qquad \ell \neq \ell',
\end{equation}
then
\begin{equation*}
\|F_j\|_{\ell^1(\mathbb{N}^2)} \leq  \int_{\mathbb{R}^2} f_j
\end{equation*}
would hold for each $j=1,2,3$, and hence
\begin{equation} \label{e:perfect}
\int_Q f_1(B_1(x))^{\frac{1}{2}}f_2(B_2(x))^{\frac{1}{2}}f_3(B_3(x))^{\frac{1}{2}}\,\mathrm{d}x \leq C(\delta^\alpha,M)
\bigg( \int_{\mathbb{R}^2} f_1 \bigg)^{\frac{1}{2}}
\bigg(  \int_{\mathbb{R}^2} f_2 \bigg)^{\frac{1}{2}}
\bigg(  \int_{\mathbb{R}^2} f_3 \bigg)^{\frac{1}{2}}
\end{equation}
would follow immediately. If each $B_j$ is linear and equal to
$\Pi_j$ then \eqref{e:LWdisjoint} is of course true, although
otherwise it is not. In order to achieve a version of
\eqref{e:LWdisjoint} in general, it is necessary to modify our
decomposition of $Q$.

To better understand the location of each image $B_j(T_j(\Pi_jn))$
the $P(n)$ should in fact be parallelepipeds whose faces are given
by pull-backs of certain lines in $\mathbb{R}^2$ under the linear
maps $\mathrm{d}B_j(x_Q)$.

However, we still need to fully accommodate for the nonlinearity and
in particular the difference between $B_j(T_j(\ell))$ and
$\mathrm{d}B_j(x_Q)(T_j(\ell))$. Following the approach in
\cite{BHT} it is natural to insert relatively narrow ``buffer zones"
between the $P(n)$ to provide sufficient separation in order to
guarantee the sought after disjointness property
\eqref{e:LWdisjoint}. Clearly this depends on the smoothness of the
$B_j$ and, since we assume $C^{1,\beta}$ regularity, we take the
$P(n)$ to have sidelengths approximately $\delta^{\alpha_0}$ and the
buffer zones to have width approximately $\delta^{\alpha_1}$ where
$$
1 < \alpha_0 < \alpha_1 < 1 + \beta.
$$

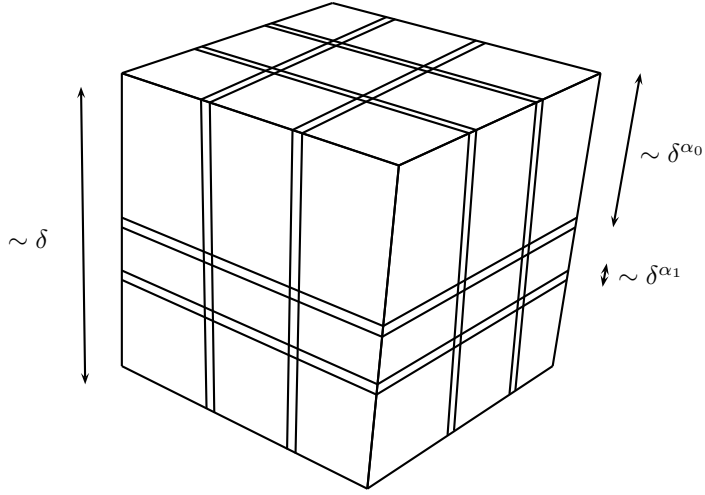
\begin{figure}
\psset{unit=1.35cm} \psset{THETA=-45,PHI=30,Dobs=15,Decran=15}
\begin{center}
\begin{pspicture}(-1,-1)(5,4)
  \multido{\iX=0+3}{2}{%
    \pNodeThreeD(\iX,0,0){S}
    \pNodeThreeD(\iX,0,3){E}
    \psline(S)(E)
    \pNodeThreeD(0,0,\iX){S}
    \pNodeThreeD(3,0,\iX){E}
    \psline(S)(E)
    \pNodeThreeD(3,\iX,0){S}
    \pNodeThreeD(3,\iX,3){E}
    \psline(S)(E)
    \pNodeThreeD(3,0,\iX){S}
    \pNodeThreeD(3,3,\iX){E}
    \psline(S)(E)
    \pNodeThreeD(\iX,0,3){S}
    \pNodeThreeD(\iX,3,3){E}
    \psline(S)(E)
    \pNodeThreeD(0,\iX,3){S}
    \pNodeThreeD(3,\iX,3){E}
    \psline(S)(E)
  }

  \multido{\nX=0.95+0.10}{2}{%
    \pNodeThreeD(\nX,0.2,-0.17){S}
    \pNodeThreeD(\nX,0,3){E}
    \psline(S)(E)
    \pNodeThreeD(0,0,\nX){S}
    \pNodeThreeD(3,0,\nX){E}
    \psline(S)(E)
    \pNodeThreeD(3.15,\nX,0.19){S}
    \pNodeThreeD(3,\nX,3){E}
    \psline(S)(E)
    \pNodeThreeD(3,0,\nX){S}
    \pNodeThreeD(3,3,\nX){E}
    \psline(S)(E)
    \pNodeThreeD(\nX,0,3){S}
    \pNodeThreeD(\nX,2.6,3.15){E}
    \psline(S)(E)
    \pNodeThreeD(0,\nX,3){S}
    \pNodeThreeD(3,\nX,3){E}
    \psline(S)(E)
}

 \multido{\nX=1.95+0.10}{2}{%
    \pNodeThreeD(\nX,0.2,-0.17){S}
    \pNodeThreeD(\nX,0,3){E}
    \psline(S)(E)
    \pNodeThreeD(3.15,\nX,0.19){S}
    \pNodeThreeD(3,\nX,3){E}
    \psline(S)(E)
    \pNodeThreeD(\nX,0,3){S}
    \pNodeThreeD(\nX,2.6,3.15){E}
    \psline(S)(E)
    \pNodeThreeD(0,\nX,3){S}
    \pNodeThreeD(3,\nX,3){E}
    \psline(S)(E)
}

 \multido{\nX=1.50+0.10}{2}{%
    \pNodeThreeD(0,0,\nX){S}
    \pNodeThreeD(3,0,\nX){E}
    \psline(S)(E)
    \pNodeThreeD(3,0,\nX){S}
    \pNodeThreeD(3,3,\nX){E}
    \psline(S)(E)
}

\pNodeThreeD(3.25,3.25,1.5){S} \pNodeThreeD(3.25,3.25,3.0){E}
\psline[arrows=<->](S)(E)

\pNodeThreeD(3.25,3.25,2.25){L} \uput[r](L){$\sim\delta^{\alpha_0}$}

\pNodeThreeD(3.25,3.25,0.875){S} \pNodeThreeD(3.25,3.25,1.125){E}
\psline[arrows=<->](S)(E)

\pNodeThreeD(3.25,3.25,1.0){L} \uput[r](L){$\sim\delta^{\alpha_1}$}

\pNodeThreeD(0,-0.5,0){S} \pNodeThreeD(0,-0.5,3.0){E}
\psline[arrows=<->](S)(E) \pNodeThreeD(0,-1.55,1.9){L}
\uput[r](L){$\sim\delta$}

\end{pspicture}
\end{center}
\vspace{5mm} \caption{The modified decomposition of $Q$.}
\label{fig2}
\end{figure}
The decomposition of $Q$ now has a ``main component" from the $P(n)$
and a ``error component" from the buffer zones. We would like to use
the above argument which led to \eqref{e:perfect} on each component.
However, in order for the error component to genuinely contribute an
acceptable error term, we need to relax the regular decomposition
(into equally sized $P(n)$) since a ``large" amount of mass of the
$f_j\circ B_j$ may lie on the buffer zones. Again following ideas
from \cite{BHT} we use a simple pigeonholing argument to position
the buffer zones in an efficient location given the constraint that
the $P(n)$ should have \emph{essentially} the same sidelengths. See
Figure \ref{fig2}. Putting the resulting estimates together yields
the desired recursive inequality \eqref{inductionoutline} with
$\alpha=\alpha_0$ and some $\gamma>0$.

See Section \ref{section:induction} for the complete details of this
induction-on-scales argument in the full generality of Theorem
\ref{t:main}.

\section{Preparation and reduction to the orthogonal projection case} \label{section:canonical}
Recall the definition of $\Pi_j : \mathbb{R}^d \rightarrow
\mathbb{R}^{d_j}$ given by \eqref{e:Pijdefn}. In this section we
shall prove that Theorem \ref{t:main} is a consequence of the
following nonlinear version of Proposition \ref{p:Finnerorth}.
\begin{proposition} \label{p:straightBj}
Suppose $\beta, \kappa >0$ are given and $\alpha_0,\alpha_1$ satisfy
$1 < \alpha_0 < \alpha_1 < 1+\beta$. Let
\begin{equation} \label{e:R0defn}
  \delta_0 = \min \left\{\bigg( \frac{c_d}{\kappa} \bigg)^{\frac{1}{1+\beta-\alpha_1}},
\bigg( \frac{1}{4}\bigg)^{\frac{1}{\min\{\alpha_0-1,\alpha_1-\alpha_0\}}} \right\}.
\end{equation}
Suppose that $B_j:\mathbb{R}^d\rightarrow\mathbb{R}^{d_j}$ is a
$C^{1,\beta}$ submersion satisfying
$\|B_j\|_{C^{1,\beta}}\leq\kappa$ in $Q(x_0,\delta_0)$ and
$\mathrm{d}B_j(x_0) = \Pi_j$ for each $1 \leq j \leq m$. Then for
$c_d \in (0,\kappa)$ sufficiently small,
\begin{equation*}
\int_{Q(x_0,\delta_0)} \prod_{j=1}^m f_j(B_j(x))^{\frac{1}{m-1}}\,\mathrm{d}x \leq
10^d \exp\left(\frac{10^d\delta_0^{\frac{\alpha_1-\alpha_0}{m-1}}}{1-2^{-\frac{\alpha_1-\alpha_0}{m-1}}}\right) \prod_{j=1}^m\left(\int_{\mathbb{R}^{d_j}}f_j\right)^{\frac{1}{m-1}}
\end{equation*}
for all nonnegative $f_j \in L^1(\mathbb{R}^{d_j})$, $1\leq j\leq
m$.
\end{proposition}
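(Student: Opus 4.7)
The plan is to run the induction-on-scales argument sketched in Section \ref{section:proofidea}, adapted to the general coordinate-projection setting. Introduce an auxiliary ``constancy scale'' $M > 0$ and consider only inputs $f_j$ which are constant on $M^{-1}$-cubes; let $C(\delta, M)$ be the best constant in the claimed inequality when $Q(x_0,\delta_0)$ is replaced by an arbitrary axis-parallel subcube $Q \subseteq Q(x_0,\delta_0)$ of sidelength $\delta$. The goal is to show that $C(\delta_0, M)$ is bounded by the stated explicit constant, uniformly in $M$, after which a standard density argument removes the constancy hypothesis on $\mathbf{f}$.

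For the \emph{base case}, choose $\delta$ so small (in terms of $M$ and the modulus of continuity of the $\mathrm{d}B_j$) that on every $Q \subseteq Q(x_0,\delta_0)$ of sidelength $\delta$ the function $f_j \circ B_j$ agrees, up to a translation, with $f_j \circ \mathrm{d}B_j(x_Q)$, and such that $|X_j(\mathrm{d}B_j(x_Q)) - X_j(\Pi_j)|$ is so small that $|\star \bigwedge_{j} \star X_j(\mathrm{d}B_j(x_Q))| \geq \tfrac12$. Proposition \ref{p:linearcase} then gives $C(\delta,M) \leq 2^{1/(m-1)}$ at these small scales.

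The heart of the proof is the \emph{inductive step}, establishing a recursion of the form $C(\delta,M) \leq (1 + O(\delta^\gamma)) C(2\delta^{\alpha_0}, M)$ uniformly in $\delta \leq \delta_0$ and $M$. Fix $Q \subseteq Q(x_0,\delta_0)$ of sidelength $\delta$ centred at $x_Q$, and partition $Q$ into a grid of parallelepipeds $P(n)$, $n \in \mathbb{N}^d$, of essentially the same sidelength $\sim \delta^{\alpha_0}$, whose faces are pull-backs under $\mathrm{d}B_j(x_Q)$ of coordinate hyperplanes in $\mathbb{R}^{d_j}$, separated by thin buffer zones of width $\sim \delta^{\alpha_1}$. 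For each $j$ and each $\ell \in \mathbb{N}^{d_j}$ define the tube $T_j(\ell) = \bigcup_{\Pi_j n = \ell} P(n)$ and the weight $F_j(\ell) = \int_{B_j(T_j(\ell))} f_j$. Split the integral over $Q$ into its contribution from $\bigcup_n P(n)$ and from the buffer zones. On each $P(n)$ apply the definition of $C(\delta^{\alpha_0}, M)$, then sum using the discrete inequality \eqref{e:discreteFinner} to obtain
\begin{equation*}
\int_{\bigcup_n P(n)} \prod_{j=1}^m (f_j\circ B_j)^{\frac{1}{m-1}} \leq C(2\delta^{\alpha_0},M) \prod_{j=1}^m \|F_j\|_{\ell^1(\mathbb{N}^{d_j})}^{\frac{1}{m-1}}.
\end{equation*}
The crucial step is to prove the disjointness property $B_j(T_j(\ell)) \cap B_j(T_j(\ell')) = \emptyset$ for $\ell \neq \ell'$, which then yields $\|F_j\|_{\ell^1} \leq \int f_j$. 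Here one uses that $B_j = \mathrm{d}B_j(x_Q) + O(\delta^{\alpha_0(1+\beta)})$ on $P(n)$ by the $C^{1,\beta}$ hypothesis, and the buffer width $\delta^{\alpha_1}$ dominates this error precisely because $\alpha_1 < 1+\beta$ (and $\delta \leq \delta_0$ is small); the fact that $\mathrm{d}B_j(x_0)=\Pi_j$ together with the choice of faces for $P(n)$ keeps the linearised tubes genuinely disjoint. A pigeonhole argument positions the buffer zones so that the mass lying on them is at most a $\delta^{\alpha_1 - \alpha_0}$ fraction of the total, giving an error term absorbed into the $(1+O(\delta^\gamma))$ factor with $\gamma = (\alpha_1 - \alpha_0)/(m-1)$.

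The main obstacle is precisely the verification of the disjointness property: one must choose the orientation of the faces of $P(n)$ and the width of the buffer zones compatibly with all $m$ maps $B_j$ simultaneously, and check that the $C^{1,\beta}$ deviation of $B_j$ from $\mathrm{d}B_j(x_Q)$ (of order $\delta^{\alpha_0(1+\beta)}$) is comfortably absorbed by the buffer width $\delta^{\alpha_1}$; this is exactly where the definition \eqref{e:R0defn} of $\delta_0$, together with the constraints $1 < \alpha_0 < \alpha_1 < 1+\beta$, come into play. Once the recursion is proved, iterating from $\delta_0$ along the sequence $\delta_0, 2\delta_0^{\alpha_0}, 2(2\delta_0^{\alpha_0})^{\alpha_0}, \ldots$ down to the base-case regime telescopes the error factors $(1+O(\delta^\gamma))$ into the convergent exponential bound $\exp\bigl(10^d \delta_0^{(\alpha_1-\alpha_0)/(m-1)}/(1 - 2^{-(\alpha_1-\alpha_0)/(m-1)})\bigr)$ displayed in the proposition, finishing the argument.
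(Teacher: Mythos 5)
Your proposal follows essentially the same induction-on-scales argument as the paper: reducing to inputs that are constant at scale $M^{-1}$, decomposing $Q$ into parallelepipeds whose faces are pulled back under the linearisations $\mathrm{d}B_j(x_Q)$, inserting buffer zones of width $\sim\delta^{\alpha_1}$ positioned by pigeonholing, applying the discrete inequality \eqref{e:discreteFinner} together with a tube-disjointness lemma, and iterating the resulting recursion down to the base case. One small slip worth noting: since the linearisation is taken at $x_Q$ (the centre of $Q$, not of each $P(n)$), the deviation of $B_j$ from $\mathrm{d}B_j(x_Q)$ on $P(n)\subseteq Q$ is $O(\delta^{1+\beta})$ rather than $O(\delta^{\alpha_0(1+\beta)})$, and it is this larger error that the buffer width $\delta^{\alpha_1}$ must dominate --- consistent with the condition $\alpha_1<1+\beta$ you correctly identify.
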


As mentioned already in the previous section, the proof of
Proposition \ref{p:straightBj} will proceed by an
induction-on-scales argument. For a cube at scale $\delta$, we
decompose into parallelepipeds of sidelength approximately
$\delta^{\alpha_0}$ and the buffer zones will have thickness
approximately $\delta^{\alpha_1}$. We have stated Proposition
\ref{p:straightBj} with this in mind and we have provided explicit
information on how the size of the neighbourhood and the constant
depend on the relevant parameters.

\subsection*{Deduction of Theorem \ref{t:main} from Proposition \ref{p:straightBj}}

The argument which follows is similar to the argument given in
Appendix \ref{appendix:reduction} for the corresponding claim in the
linear case. A little extra work is required to verify the
uniformity claims in Theorem \ref{t:main} concerning the
neighbourhood and the constant.

Select any set of vectors $\{a_k : k \in \mathcal{K}_j\}$ forming an
orthonormal basis for $\ker \mathrm{d}B_j(x_0)$. By definition of
the Hodge star and orthogonality we get
\begin{equation} \label{e:mainstar}
 \star X_j(\mathrm{d}B_j(x_0)) = \|X_j(\mathrm{d}B_j(x_0))\|_{\Lambda^{d_j}(\mathbb{R}^d)} \bigwedge_{k \in \mathcal{K}_j} a_k.
\end{equation}
Let $A$ be the $d \times d$ matrix whose $i$th column is equal to
$a_i$ for each $1 \leq i \leq d$.  Finally, let $C_j$ be the $d_j
\times d_j$ matrix given by
\begin{equation*}
C_j = \mathrm{d}B_j(x_0)A_j,
\end{equation*}
where $A_j$ is the $d \times d_j$ matrix obtained by deleting from
$A$ the columns $a_k$ for each $k \in \mathcal{K}_j$.

Then, by construction, the map $\widetilde{B}_j : \mathbb{R}^d
\rightarrow \mathbb{R}^{d_j}$ given by
\begin{equation*} \label{e:Bjfirst}
\widetilde{B}_j(x) = C_j^{-1}B_j(Ax)
\end{equation*}
satisfies
\begin{equation} \label{e:canonical}
\mathrm{d}\widetilde{B}_j(\widetilde{x}_0) = C_j^{-1}\mathrm{d}B_j(x_0)A = \Pi_j,
\end{equation}
where $\widetilde{x}_0 = A^{-1}x_0$. Since we are assuming
\eqref{e:directsum} and since $B_j$ is a submersion at $x_0$ we know
that the matrices $A$ and $C_j$ are invertible.

Let $U$ be some neighbourhood of $x_0$ and $\psi$ a cutoff function
supported in $U$. Using $A$ to change variables one obtains
\begin{equation} \label{e:A}
\int_{\mathbb{R}^d} \prod_{j=1}^m f_j(B_j(x))^{\frac{1}{m-1}}\psi(x) \,\mathrm{d}x = |\det(A)| \int_{\mathbb{R}^d} \prod_{j=1}^m \widetilde{f}_j(\widetilde{B}_j(x))^{\frac{1}{m-1}}\widetilde{\psi}(x) \,\mathrm{d}x,
\end{equation}
where $\widetilde{\psi} = \psi \circ A$ is a cutoff function
supported in $A^{-1}U$ and $\widetilde{f}_j = f_j \circ C_j$, $1
\leq j \leq m$. Of course, we know that
$\mathrm{d}\widetilde{B}_j(\widetilde{x}_0) = \Pi_j$ by
\eqref{e:canonical}. Notice also that
\begin{align*}
\|\mathrm{d}\widetilde{B}_j(x) - \mathrm{d}\widetilde{B}_j(y)\| & = \|C_j^{-1}(\mathrm{d}B_j(Ax) - \mathrm{d}B_j(Ay))A\|  \leq C\kappa\|C_j^{-1}\||x-y|^\beta,
\end{align*}
where the constant $C$ depends on at most $d$. To show that we may
choose the neighbourhood $U$ and the constant in the claimed uniform
manner we need to show that suitable upper bounds hold for the norms
of $A^{-1}$ and each $C_j^{-1}$.

For $A^{-1}$, we note that
\begin{equation*}
\star \bigwedge_{j=1}^m \star X_j(\mathrm{d}B_j(x_0)) = \prod_{j=1}^m \|X_j(\mathrm{d}B_j(x_0))\|_{\Lambda^{d_j}(\mathbb{R}^d)} \star \bigwedge_{j=1}^m \bigwedge_{k \in \mathcal{K}_j} a_k
\end{equation*}
by \eqref{e:mainstar} and therefore
\begin{equation} \label{e:detAidentity}
  \star \bigwedge_{j=1}^m \star X_j(\mathrm{d}B_j(x_0)) = \det(A) \prod_{j=1}^m \|X_j(\mathrm{d}B_j(x_0))\|_{\Lambda^{d_j}(\mathbb{R}^d)}.
\end{equation}
Since $\|B_j\|_{C^{1,\beta}} \leq \kappa$ it follows that
\begin{equation*}
  \left|\star \bigwedge_{j=1}^m \star X_j(\mathrm{d}B_j(x_0))\right| \leq C |\det(A)|
\end{equation*}
for some constant $C$ depending on $\kappa$ and $d$. Since each
column of $A$ is a unit vector, it follows that the norm of $A^{-1}$
is bounded above by a constant depending on $\varepsilon,\kappa$ and
$d$.

For $C_j^{-1}$, from \eqref{e:mainstar} we get
\begin{equation} \label{e:detCjidentity}
  |\det(C_j)| = \|X_j(\mathrm{d}B_j(x_0))\|_{\Lambda^{d_j}(\mathbb{R}^d)} |\det(A)|.
\end{equation}
By \eqref{e:detAidentity},
\begin{equation*}
  \varepsilon \leq C\|X_j(\mathrm{d}B_j(x_0))\|_{\Lambda^{d_j}(\mathbb{R}^{d_j})}|\det(A)|,
\end{equation*}
for some constant $C$ depending on $\kappa$ and $d$. It follows that
the norm of $C_j^{-1}$ is also bounded above by a constant depending
on $\varepsilon,\kappa$ and $d$.

Applying Proposition \ref{p:straightBj} it follows that there exists
a neighbourhood $U$ of $x_0$ depending on at most $\beta,
\varepsilon, \kappa$ and $d$ such that
\begin{align*}
\int_{\mathbb{R}^d} \prod_{j=1}^m f_j(B_j(x))^{\frac{1}{m-1}}\psi(x)\,\mathrm{d}x \leq C|\det(A)| \prod_{j=1}^m \left(
\int_{\mathbb{R}^{d_j}} \widetilde{f}_j \right)^{\frac{1}{m-1}},
\end{align*}
where $C$ depends on at most $d$ and $\psi$. Thus
\begin{align*}
\int_{\mathbb{R}^d} \prod_{j=1}^m f_j(B_j(x))^{\frac{1}{m-1}}\psi(x)\,\mathrm{d}x & \leq C\frac{|\det(A)|}{\left(\prod_{j=1}^m |\det(C_j)|\right)^{\frac{1}{m-1}}} \prod_{j=1}^m \left( \int_{\mathbb{R}^{d_j}}
f_j \right)^{\frac{1}{m-1}} \\
& = C\left|\star \bigwedge_{j=1}^m \star X_j(\mathrm{d}B_j(x_0)) \right|^{-\frac{1}{m-1}} \prod_{j=1}^m \left( \int_{\mathbb{R}^{d_j}}
f_j \right)^{\frac{1}{m-1}},
\end{align*}
where the equality holds because of \eqref{e:detAidentity} and
\eqref{e:detCjidentity}. Theorem \ref{t:main} now follows.

For the various constants appearing in the above proof, one may
easily obtain some explicit dependence in terms of the relevant
parameters. Combined with Proposition \ref{p:straightBj}, this gives
additional information on the sizes of the neighbourhood $U$ and
constant $C$ appearing in the statement of Theorem \ref{t:main}. We
do not pursue this matter further here.

\section{Proof of Proposition \ref{p:straightBj}: Induction-on-scales} \label{section:induction}

Before stating the main induction lemma we use to prove Proposition
\ref{p:straightBj}, we need to fix some further notation. For each
$1 \leq j \leq m$ and $M > 0$, let $L^{1}_{M}(\mathbb{R}^{d_j})$
denote those nonnegative $f \in L^{1}(\mathbb{R}^{d_j})$ satisfying
$f(y_1) \leq 2 f(y_2)$ whenever $y_1$ and $y_2$ are in the support
of $f$ and $|y_1-y_2| \leq M^{-1}$; that is, those $f$ which are
effectively constant at the scale $M^{-1}$. One may easily check
that if $\mu$ is a finite measure on $\mathbb{R}^{d_j}$ then
$P_{c/M}^{(d_j)}*\mu\in L^1_M(\mathbb{R}^{d_j})$, where
$P_{c/M}^{(d_j)}$ denotes the Poisson kernel on $\mathbb{R}^{d_j}$
at height $c/M$. Here $c$ is a suitably large constant depending
only on $d_j$. By an elementary density argument, it will be enough
to prove Proposition \ref{p:straightBj} for $f_j\in
L^1_M(\mathbb{R}^{d_j})$, $1\leq j\leq m$, with neighbourhood $U$
and constant $C$ independent of $M$. As we shall shortly see, we
consider such a subclass of functions in order to provide a ``base
case" for the inductive argument.

For $\beta, \kappa > 0$, $1 < \alpha_0 < \alpha_1 < 1 + \beta$ and
$x_0 \in \mathbb{R}^d$ we let
$\mathcal{B}(\beta,\kappa,\alpha_0,\alpha_1,x_0)$ be the family of
data $\mathbf{B}$ such that $B_j$ belongs to
$C^{1,\beta}(Q(x_0,\delta_0))$ with $\|B_j\|_{C^{1,\beta}} \leq
\kappa$ and satisfies $\mathrm{d}B_j(x_0) = \Pi_j$, $1 \leq j \leq
m$. Here, $\delta_0$ is given by \eqref{e:R0defn}.

Now let $C(\delta,M)$ denote the best constant in the inequality
\begin{equation*} \label{e:FinnerQ}
  \int_Q \prod_{j=1}^m f_j(B_j(x))^{\frac{1}{m-1}}\,\mathrm{d}x \leq C \prod_{j=1}^m \left( \int_{\mathbb{R}^{d_j}} f_j \right)^{\frac{1}{m-1}}
\end{equation*}
over all $\mathbf{B} \in
\mathcal{B}(\beta,\kappa,\alpha_0,\alpha_1,x_0)$, all axis-parallel
subcubes $Q$ of $Q(x_0,\delta_0)$ with sidelength equal to $\delta$
and all inputs $\textbf{f}$ such that $f_j$ belongs to
$L^1_M(\mathbb{R}^{d_j})$, $1 \leq j \leq m$.

We note that the constant $C(\delta,M)$ also depends on the
parameters $\beta, \kappa, \alpha_0$ and $\alpha_1$, although there
is little to be gained in what follows from making this dependence
explicit. The main induction-on-scales lemma is the following.
\begin{lemma} \label{l:main}
For all $0 < \delta \leq \delta_0$ we have
\begin{equation*}
  C(\delta,M) \leq (1+10^d\delta^{\frac{\alpha_1-\alpha_0}{m-1}})C(2\delta^{\alpha_0},M).
\end{equation*}
\end{lemma}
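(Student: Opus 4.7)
The plan is to carry out the induction-on-scales argument outlined in Section \ref{section:proofidea} for general $m$ and $d$. Fix a datum $\mathbf{B} \in \mathcal{B}(\beta,\kappa,\alpha_0,\alpha_1,x_0)$, an axis-parallel subcube $Q \subseteq Q(x_0,\delta_0)$ of sidelength $\delta \leq \delta_0$ and centre $x_Q$, and inputs $f_j \in L^1_M(\mathbb{R}^{d_j})$. The workhorse is the Taylor expansion, valid for $x \in Q$:
\begin{equation*}
  B_j(x) = B_j(x_Q) + \Pi_j(x - x_Q) + E_j(x), \qquad |E_j(x)| \leq \kappa |x - x_Q|^{1+\beta},
\end{equation*}
where the first-order correction $(\mathrm{d}B_j(x_Q) - \Pi_j)(x - x_Q)$ is $O(\kappa \delta_0^\beta)\cdot|x - x_Q|$ by the $C^{1,\beta}$ hypothesis and hence absorbed into $E_j$ for $\delta_0$ as in \eqref{e:R0defn}.

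First I decompose $Q$ into axis-parallel subcubes $P(n)$, indexed by $n \in \mathbb{N}^d$, of sidelengths in $[\delta^{\alpha_0}, 2\delta^{\alpha_0}]$, separated by axis-aligned \emph{buffer slabs} $S$ of width approximately $\delta^{\alpha_1}$. Along each coordinate direction there are of order $\delta^{\alpha_0 - \alpha_1}$ admissible offsets of the grid, and a standard pigeonhole argument applied once per direction produces offsets for which
\begin{equation*}
  \int_S \prod_{j=1}^m f_j(B_j(x))^{\frac{1}{m-1}}\,\mathrm{d}x \leq 10^d\, \delta^{\frac{\alpha_1-\alpha_0}{m-1}}\, C(2\delta^{\alpha_0}, M) \prod_{j=1}^m \Bigl(\int_{\mathbb{R}^{d_j}} f_j \Bigr)^{\frac{1}{m-1}}.
\end{equation*}
The exponent $1/(m-1)$ emerges by treating each individual slab (a box of thickness $\delta^{\alpha_1}$ and length $\delta$ in the remaining $d-1$ directions) as its own small-scale problem: partition the slab, apply the inductive constant $C(2\delta^{\alpha_0}, M)$ on each piece, and close via the discrete Finner inequality \eqref{e:discreteFinner} together with the gain coming from the thinness of the slab.

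For the main part $Q \setminus S = \bigsqcup_n P(n)$, I group the subcubes into tubes $T_j(\ell) := \bigcup_{n\,:\,\Pi_j n = \ell} P(n)$ for each $1 \leq j \leq m$ and $\ell \in \mathbb{N}^{d_j}$. The defining constraint $\delta_0^{1 + \beta - \alpha_1} \leq c_d/\kappa$ in \eqref{e:R0defn} guarantees that the Taylor error $\kappa \delta^{1+\beta}$ is dominated by the $\delta^{\alpha_1}$ separation between the $\Pi_j$-images of distinct tubes, which delivers the essential disjointness
\begin{equation*}
  B_j(T_j(\ell)) \cap B_j(T_j(\ell')) = \emptyset \qquad (\ell \neq \ell').
\end{equation*}
Applying $C(2\delta^{\alpha_0}, M)$ on each $P(n)$, setting $F_j(\ell) = \int_{B_j(T_j(\ell))} f_j$, summing in $n$ and invoking \eqref{e:discreteFinner}, then using the disjointness to deduce $\sum_\ell F_j(\ell) \leq \int f_j$, yields
\begin{equation*}
  \int_{Q \setminus S} \prod_{j=1}^m f_j(B_j(x))^{\frac{1}{m-1}}\,\mathrm{d}x \leq C(2\delta^{\alpha_0}, M) \prod_{j=1}^m \Bigl(\int_{\mathbb{R}^{d_j}} f_j\Bigr)^{\frac{1}{m-1}}.
\end{equation*}
Adding this to the buffer-zone bound and taking the supremum over admissible $(Q, \mathbf{B}, \mathbf{f})$ produces the claimed recursive inequality.

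The principal obstacle is the simultaneous reconciliation of (a) the disjointness estimate, which forces $\alpha_1 < 1+\beta$ so that $\delta^{\alpha_1}$ dominates the nonlinear error $\kappa \delta^{1+\beta}$, and (b) the pigeonhole bound, which must control the buffer zones in terms of $C(2\delta^{\alpha_0}, M)$ rather than $C(\delta, M)$ to avoid circularity, while still producing the clean factor $10^d\delta^{(\alpha_1-\alpha_0)/(m-1)}$. The quantitative choice of $\delta_0$ in \eqref{e:R0defn} and the $L^1_M$ hypothesis---whose role is to provide the base case and to prevent any pathological concentration that would break the pigeonhole step---are calibrated precisely to make these requirements compatible.
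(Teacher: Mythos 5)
Your proposal has the right overall shape --- decompose $Q$ into small pieces and buffer slabs, position the buffers by pigeonholing, apply the inductive constant on the small pieces, and close via the discrete Finner inequality together with a disjointness property --- but there is a genuine gap in the way you linearise the maps $B_j$, and it undermines both the disjointness step and the pigeonhole.

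You expand $B_j$ around the \emph{global} linearisation $\Pi_j = \mathrm{d}B_j(x_0)$, writing $B_j(x) = B_j(x_Q) + \Pi_j(x-x_Q) + E_j(x)$ and claiming the first-order correction $(\mathrm{d}B_j(x_Q)-\Pi_j)(x-x_Q)$ can be absorbed into $E_j$. The $C^{1,\beta}$ hypothesis only gives $\|\mathrm{d}B_j(x_Q)-\Pi_j\| \lesssim \kappa|x_Q-x_0|^\beta \lesssim \kappa\delta_0^\beta$, so this term contributes an amount of order $\kappa\delta_0^\beta\,\delta$ to $E_j$, which scales \emph{linearly} in the current scale $\delta$. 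The buffer width is $\delta^{\alpha_1}$ with $\alpha_1>1$, so once $\delta \leq (\kappa\delta_0^\beta)^{1/(\alpha_1-1)}$ the linear piece of the error dominates the buffer width, and the claimed disjointness $B_j(T_j(\ell))\cap B_j(T_j(\ell'))=\emptyset$ fails. Since Lemma~\ref{l:main} must hold for all $\delta\leq\delta_0$, this regime cannot be avoided. The paper's fix is to linearise at the centre of the \emph{current} cube, working with $\mathrm{d}B_j(x_Q)$ so that the error is genuinely $O(\kappa\delta^{1+\beta})$; this is where the constraint $\delta_0^{1+\beta-\alpha_1}\leq c_d/\kappa$ is actually used, via the factorisation $B_j = \mathrm{d}B_j(x_Q)\circ\Phi_j$ with $|\Phi_j(x)-x|\lesssim\kappa\delta^{1+\beta}$ (Lemma~\ref{l:Phi}). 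But $\ker\mathrm{d}B_j(x_Q)$ is no longer a coordinate subspace, so an axis-parallel decomposition no longer aligns with the fibres of the linearisations: the paper instead uses parallelepipeds whose faces lie in hyperplanes $\langle\{a_k:k\neq i\}\rangle + \{s\,\star\bigwedge_{k\neq i}a_k\}$ with the $a_k$ spanning $\ker\mathrm{d}B_j(x_Q)$, and the pigeonhole is carried out along these tilted foliation directions via the map $\sigma$ and the slabs $\Sigma(i,J)$, not along coordinate axes. Your outline does not engage with either of these points. One further inaccuracy: the factor $\delta^{(\alpha_1-\alpha_0)/(m-1)}$ does not come from ``the thinness of the slab'' per se; it comes from the pigeonhole bound $\|F_{\sigma(i)}\|_{\ell^1}\leq 4\delta^{\alpha_1-\alpha_0}\int f_{\sigma(i)}$, which yields the gain in exactly one of the $m$ factors, producing the exponent $1/(m-1)$ after the discrete Finner step.
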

The proof of Lemma \ref{l:main} is a little lengthy. Before giving
the proof we show how Lemma \ref{l:main} implies Proposition
\ref{p:straightBj}.

\subsection*{Deduction of Proposition \ref{p:straightBj} from Lemma \ref{l:main}}
Firstly we claim that the ``base case" inequality
\begin{equation} \label{e:basecase}
C(\delta_0/2^N,M) \leq 10^d
\end{equation}
holds for sufficiently large $N$. To see \eqref{e:basecase}, suppose
$\mathbf{B} \in \mathcal{B}(\beta,\kappa,\alpha_0,\alpha_1,x_0)$,
$Q$ is a subcube of $Q(x_0,\delta_0)$ with centre $x_Q$ and
sidelength $\delta_0/2^N$, and the input $\textbf{f}$ is such that
$f_j$ belongs to $L^1_M(\mathbb{R}^{d_j})$, $1 \leq j \leq m$. For
any $x \in Q$,
\begin{equation*}
  |B_j(x) - (B_j(x_Q) + \mathrm{d}B_j(x_Q)(x-x_Q))| \leq \kappa|x-x_Q|^{1+\beta} \leq 1/M
\end{equation*}
if $N$ is sufficiently large (depending on $\beta,\kappa,d$ and
$M$). Since $f_j \in L^{1}_M(\mathbb{R}^{d_j})$ it follows that
\begin{equation*}
  \int_Q \prod_{j=1}^m f_j(B_j(x))^{\frac{1}{m-1}}\,\mathrm{d}x \leq 2^m \int_{Q-\{x_Q\}} \prod_{j=1}^m f_j( \, \cdot + B_j(x_Q))(\mathrm{d}B_j(x_Q)x)^{\frac{1}{m-1}}\,\mathrm{d}x.
\end{equation*}
Now
\begin{equation*}
\|\mathrm{d}B_j(x_Q) - \Pi_j\| = \|\mathrm{d}B_j(x_Q) - \mathrm{d}B_j(x_0)\| \leq \frac{1}{100^d},
\end{equation*}
which implies that
\begin{equation*}
  \star\bigwedge_{j=1}^m\star X_j(\mathrm{d}B_j(x_Q))\geq \frac{1}{2},
\end{equation*}
and therefore
\begin{equation*}
  \int_Q \prod_{j=1}^m f_j(B_j(x))^{\frac{1}{m-1}}\,\mathrm{d}x \leq 10^d \prod_{j=1}^m \bigg( \int_{\mathbb{R}^{d_j}} f_j \bigg)^{\frac{1}{m-1}}
\end{equation*}
by Proposition \ref{p:linearcase}. Hence, \eqref{e:basecase} holds.

For $0 < \delta \leq \delta_0 \leq (1/4)^{1/\alpha_0-1}$ it follows
from Lemma \ref{l:main} that
\begin{equation} \label{e:induction}
  C(\delta,M) \leq (1+ 10^d \delta^{\frac{\alpha_1-\alpha_0}{m-1}})C(\delta/2,M).
\end{equation}
Applying \eqref{e:induction} iteratively $N$ times we see that
\begin{equation*}
  C(\delta_0,M) \leq C(\delta_0/2^N,M)\prod_{r=0}^{N-1} \big(1+10^d(\delta_0/2^r)^{\frac{\alpha_1-\alpha_0}{m-1}}\big).
\end{equation*}
The product term is under control uniformly in $N$ because
\begin{align*}
  \log \prod_{r=0}^{N-1} \big(1+10^d(\delta_0/2^r)^{\frac{\alpha_1-\alpha_0}{m-1}}\big) & =
\sum_{r=0}^{N-1} \log \big(1+10^d(\delta_0/2^r)^{\frac{\alpha_1-\alpha_0}{m-1}}\big)  \\
  & \leq 10^d\delta_0^{\frac{\alpha_1-\alpha_0}{m-1}} \sum_{r=0}^\infty 2^{-\frac{\alpha_1-\alpha_0}{m-1}r} \\
  & \leq \frac{10^d\delta_0^{\frac{\alpha_1-\alpha_0}{m-1}}}{1-2^{-\frac{\alpha_1-\alpha_0}{m-1}}}.
\end{align*}
From the base case \eqref{e:basecase} it follows that
\begin{equation*}
  C(\delta_0,M) \leq 10^d \exp\left(\frac{10^d\delta_0^{\frac{\alpha_1-\alpha_0}{m-1}}}{1-2^{-\frac{\alpha_1-\alpha_0}{m-1}}}\right);
\end{equation*}
that is,
\begin{equation} \label{e:lastone}
   \int_{Q(x_0,\delta_0)} \prod_{j=1}^m f_j(B_j(x))^{\frac{1}{m-1}}\,\mathrm{d}x \leq 10^d \exp\left(\frac{10^d\delta_0^{\frac{\alpha_1-\alpha_0}{m-1}}}{1-2^{-\frac{\alpha_1-\alpha_0}{m-1}}}\right) \prod_{j=1}^m \left( \int_{\mathbb{R}^{d_j}} f_j \right)^{\frac{1}{m-1}}
\end{equation}
for all $f_j \in L^1_M(\mathbb{R}^{d_j})$, $1 \leq j \leq m$. Since
the constant in \eqref{e:lastone} is independent of $M$, it follows
that the inequality is valid for all $f_j \in
L^1(\mathbb{R}^{d_j})$. This completes our proof of Proposition
\ref{p:straightBj}.

\subsection*{Proof of Lemma \ref{l:main}}
Suppose $\mathbf{B} = (B_j) \in
\mathcal{B}(\beta,\kappa,\alpha_0,\alpha_1,x_0)$, $Q$ is an
axis-parallel subcube of $Q(x_0,\delta_0)$ with sidelength equal to
$\delta$ and centre $x_Q$, and suppose $\textbf{f} = (f_j)$ is such
that $f_j$ belongs to $L^{1}_M(\mathbb{R}^{d_j})$, $1\leq j\leq m$.
Notice that the desired inequality
\begin{equation} \label{e:mainestimate}
  \int_{Q} \prod_{j=1}^m f_j(B_j(x))^{\frac{1}{m-1}}\,\mathrm{d}x
  \leq (1+10^d\delta^{\frac{\alpha_1-\alpha_0}{m-1}})C(2\delta^{\alpha_0},M) \prod_{j=1}^m \left( \int_{\mathbb{R}^{d_j}}f_j \right)^{\frac{1}{m-1}}
\end{equation}
is invariant under the transformation $(\mathbf{B},\mathbf{f},Q)
\mapsto
(\widetilde{\mathbf{B}},\widetilde{\mathbf{f}},\widetilde{Q})$ where
$\widetilde{B}_j = B_j(\,\cdot+x_{Q}) - B_j(x_{Q})$, $\widetilde{Q}
= Q - \{x_{Q}\}$ and $\widetilde{f}_j = f_j( \,\cdot + B_j(x_{Q}))$.
Hence, without loss of generality, $Q = Q(0,\delta)$ and $B_j(0) = 0$ for
$1 \leq j \leq m$. This reduction is merely for notational
convenience; in particular, it ensures
$$
|B_j(x) - \mathrm{d}B_j(0)x| \leq \kappa|x|^{1+\beta}.
$$
By the smoothness hypothesis, we have that
\begin{equation} \label{e:Pi_j}
\|\mathrm{d}B_j(0) - \Pi_j \|  \leq \frac{1}{100^d}
\end{equation}
for sufficiently small $c_{d}$. Since
\begin{equation*}
\ker \Pi_j = \langle \{e_k:k \in \mathcal{K}_j\} \rangle,
\end{equation*}
it follows that for each $1\leq k\leq d$ there exist $a_k \in
\mathbb{R}^d$ such that
\begin{equation} \label{e:tildea_k}
|a_k - e_k| \leq \frac{1}{10^d},
\end{equation} and
\begin{equation*}
\ker \mathrm{d}B_j(0) = \langle \{a_k : k \in \mathcal{K}_j\}\rangle
\end{equation*}
for each $1\leq j\leq m$. Here, $e_k$ denotes the $k$th standard
basis vector in $\mathbb{R}^d$.

The proof of Lemma \ref{l:main} naturally divides into four steps.

\vspace{0.5cm} \textbf{Step I: Foliations of $\mathbb{R}^d$}

For each $1 \leq i \leq d$ consider the one-parameter family of hypersurfaces
\begin{equation} \label{e:Rdhyperplanes0}
  \langle \{a_k : k \neq i \} \rangle + \bigg\{s \star \bigwedge_{k \neq i} a_k\bigg\}
\end{equation}
where $s \in \mathbb{R}$. We point out that $\star \bigwedge_{k \neq i} a_k$ is simply the
cross product of the vectors $\{ a_k : k \neq i \}$, yielding a
vector normal to $\langle \{ a_k : k \neq i \} \rangle$. The set of vectors $\{ \star \bigwedge_{k \neq i} a_k:1\leq
i\leq d\}$ in $\mathbb{R}^d$ is linearly independent since the same
is true of $\{a_i : 1 \leq i \leq d\}$. Consequently, we may decompose
$\mathbb{R}^d$ into parallelepipeds whose faces are contained in
hyperplanes of the form \eqref{e:Rdhyperplanes0}, $1 \leq i \leq d$.
We will use this to decompose the cube $Q$. As we shall see in the steps that
follow, an important feature of these hypersurfaces is that they may be expressed
as inverse images of hypersurfaces under the mappings $\mathrm{d}B_j(0)$.
To this end, let $\sigma : \{1,\ldots,d\} \rightarrow \{1,\ldots,m\}$ be the map
given by
\begin{equation*}
  \sigma(i) = \text{$(j+1)$ mod $m$}
\end{equation*}
for $i \in \mathcal{K}_j$. As will become apparent under closer inspection, there is some
freedom in our choice of this map; all that we require of $\sigma$
is that $j\mapsto\sigma(\mathcal{K}_j)$ is a permutation of
$\{1,2,\hdots,m\}$ with no fixed points.

For each $1 \leq i \leq d$ and $J \subset \mathbb{R}$ we define the
set
\begin{equation} \label{e:SigmaiJdefn}
  \Sigma(i,J) = \mathrm{d}B_{\sigma(i)}(0)\langle \{a_k : k \neq i \} \rangle
  + \bigg\{s\; \mathrm{d}B_{\sigma(i)}(0) \bigg(\star \bigwedge_{k \neq i} a_k\bigg) : s \in J \bigg\}.
\end{equation}
If $J = \{s\}$ is a singleton set then
\begin{equation*}
  \Sigma(i,\{s\}) = \mathrm{d}B_{\sigma(i)}(0)\langle \{a_k : k \neq i \} \rangle
  + \bigg\{s\; \mathrm{d}B_{\sigma(i)}(0) \bigg(\star \bigwedge_{k \neq i} a_k\bigg)\bigg\}
\end{equation*}
is a hyperplane in $\mathbb{R}^{d_{\sigma(i)}}$ since $\ker
\mathrm{d}B_{\sigma(i)}(0) \subseteq \langle \{a_k : k \neq i \}
\rangle$. Similarly,
\begin{equation} \label{e:Rdhyperplanes}
  \mathrm{d}B_{\sigma(i)}(0)^{-1}\Sigma(i,\{s\}) = \langle \{a_k : k \neq i \} \rangle + \bigg\{s \star \bigwedge_{k \neq i} a_k\bigg\}
\end{equation}
which is of course the hyperplane \eqref{e:Rdhyperplanes0}.

As outlined in Section
\ref{section:proofidea}, a regular decomposition of $\mathbb{R}^d$
into parallelepipeds of equal size and adapted to a lattice (where
for each $i$, the sequence of parameters $s^{(i)}$ that we choose is
in arithmetic progression) will not suffice to prove Lemma
\ref{l:main}. Moreover, our decomposition will need to incorporate
certain ``buffer zones" between the parallelepipeds to create
separation. In Step II below we determine the location of the buffer
zones and thus the desired decomposition of $Q$.

\vspace{0.5cm} \textbf{Step II: The decomposition of $Q$}

For each $1 \leq i \leq d$ we claim that there exists a sequence
$(s^{(i)}_n)_{n \geq 1}$ such that
\begin{equation} \label{e:pigeonpara}
s^{(i)}_n + \tfrac{1}{2}\delta^{\alpha_0} \leq s^{(i)}_{n+1} \leq s^{(i)}_n + \delta^{\alpha_0}
\end{equation}
and
\begin{equation} \label{e:pigeonnorm}
\int_{\Sigma(i,[s^{(i)}_{n+1},s^{(i)}_{n+1}+\delta^{\alpha_1}])} f_{\sigma(i)}\chi_{Q} \leq
4\delta^{\alpha_1 - \alpha_0} \int_{\Sigma(i,[s^{(i)}_{n} + \frac{1}{2}\delta^{\alpha_0}, s^{(i)}_{n} + \delta^{\alpha_0}])} f_{\sigma(i)}\chi_{Q}.
\end{equation}
To prove this, we shall choose the sequence $(s^{(i)}_n)_{n \geq 1}$
iteratively. We begin by choosing $s^{(i)}_1$ to be any real number
such that $B_{\sigma(i)}(Q) \subseteq \Sigma(i,[s^{(i)}_1,\infty))$.
Suppose that we have chosen $s^{(i)}_1,\ldots,s^{(i)}_n$ for some $n
\geq 1$. Now let $N$ be the largest integer which is less than or
equal to $\tfrac{1}{2}\delta^{\alpha_0-\alpha_1}$. Set
$\zeta^{(i)}_0 = s_n^{(i)} + \frac{1}{2}\delta^{\alpha_0}$ and then
define $\zeta^{(i)}_r = \zeta^{(i)}_{r-1} + \delta^{\alpha_1}$
iteratively for $1 \leq r \leq N$ so that
\begin{equation*}
[s^{(i)}_n + \tfrac{1}{2}\delta^{\alpha_0},s^{(i)}_n + \delta^{\alpha_0}] \supseteq
[s^{(i)}_{n} + \tfrac{1}{2}\delta^{\alpha_0}, s^{(i)}_{n} + \tfrac{1}{2}\delta^{\alpha_0} + N\delta^{\alpha_1}] = \bigcup_{r=1}^N
[\zeta^{(i)}_{r-1},\zeta^{(i)}_{r}].
\end{equation*}
Then,
\begin{align*}
\int_{\Sigma(i,[s^{(i)}_{n} + \frac{1}{2}\delta^{\alpha_0}, s^{(i)}_{n} + \delta^{\alpha_0}])} f_{\sigma(i)}\chi_{Q} \geq
\sum_{r=1}^N \int_{\Sigma(i,[\zeta^{(i)}_{r-1},\zeta^{(i)}_r])} f_{\sigma(i)}\chi_{Q},
\end{align*}
and therefore by the choice of $\delta_0$ in \eqref{e:R0defn} and
the pigeonhole principle, there exists $s^{(i)}_{n+1}$ such that
\eqref{e:pigeonpara} holds and
\begin{equation*}
\int_{\Sigma(i,[s^{(i)}_{n} + \frac{1}{2}\delta^{\alpha_0}, s^{(i)}_{n} + \delta^{\alpha_0}])} f_{\sigma(i)}\chi_{Q} \geq
\tfrac{1}{4}\delta^{\alpha_0-\alpha_1} \int_{\Sigma(i,[s^{(i)}_{n+1},s^{(i)}_{n+1}+\delta^{\alpha_1}])} f_{\sigma(i)}\chi_{Q};
  \end{equation*}
that is, \eqref{e:pigeonnorm} also holds.

We shall use the notation $J(i,n,0)$ and $J(i,n,1)$ for the
intervals given by
\begin{equation} \label{e:J0defn}
  J(i,n,0) = (s^{(i)}_{n} + \tfrac{2}{3}\delta^{\alpha_1}, s^{(i)}_{n+1} + \tfrac{1}{3}\delta^{\alpha_1}]
\end{equation}
and
\begin{equation} \label{e:J1defn}
  J(i,n,1) = (s^{(i)}_{n} + \tfrac{1}{3}\delta^{\alpha_1},s^{(i)}_{n} + \tfrac{2}{3}\delta^{\alpha_1}].
\end{equation}
Notice that the lengths of $J(i,n,0)$ and $J(i,n,1)$ are comparable to
$\delta^{\alpha_0}$ and $\delta^{\alpha_1}$ respectively.

By construction, the sets $\Sigma(i,J(i,n,1))$ contain a relatively small amount of the mass of the function $f_{\sigma(i)}$ in the sense of \eqref{e:pigeonnorm}. Furthermore, the inverse images of these sets,
\begin{equation}\label{buffdef}\mathrm{d}B_{\sigma(i)}(0)^{-1} \Sigma(i,J(i,n,1)),\end{equation}
are $O(\delta^{\alpha_1})$ neighbourhoods of hyperplanes in $\mathbb{R}^d$, which as $n$ varies are separated by $O(\delta^{\alpha_0})$. We refer to the sets \eqref{buffdef} as buffer zones.

The decomposition of $Q$ we use is given by
\begin{equation} \label{e:Qdec}
  Q = \bigcup_{\chi \in \{0,1\}^d} \bigcup_{n \in \mathbb{N}^d} P(n,\chi)
\end{equation}
where
\begin{equation} \label{e:Pkdefn}
  P(n,\chi) = \bigcap_{i=1}^d \mathrm{d}B_{\sigma(i)}(0)^{-1} \Sigma(i,J(i,n_i,\chi_i))  \cap Q.
\end{equation}
When $\chi = 0$, the $P(n,\chi)$ are large parallelepipeds
(intersected with $Q$) with sidelength approximately
$\delta^{\alpha_0}$ which form the main part of our decomposition.
For $\chi \neq 0$, the $P(n,\chi)$ are small parallelepipeds
(intersected with $Q$) with at least one sidelength approximately
$\delta^{\alpha_1}$, which decompose the buffer zones.

\vspace{0.5cm} \textbf{Step III: Disjointness}

In this step we make precise the role of the buffer zones. For each
$1 \leq j \leq m$, $\ell \in \mathbb{N}^{d_j}$ and $\chi \in
\{0,1\}^d$ let
\begin{equation*}
T_j(\ell,\chi) = \bigcup_{\substack{n \in \mathbb{N}^d: \\ \Pi_jn = \ell}} P(n,\chi).
\end{equation*}
It is the disjointness of the images of such sets under the mapping
$B_j$ that is crucial to the induction-on-scales argument which
follows in Step IV.


\begin{proposition} \label{p:disjoint}
Fix $j$ with $1 \leq j \leq m$ and $\chi \in \{0,1\}^d$. If
$\ell,\ell' \in \mathbb{N}^{d_j}$ are distinct then
\begin{equation} \label{e:disjointP}
B_j(T_j(\ell,\chi)) \cap B_j(T_j(\ell',\chi)) = \emptyset.
\end{equation}
\end{proposition}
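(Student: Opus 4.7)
My plan is to argue by contradiction: suppose there exist $x \in T_j(\ell,\chi)$ and $x' \in T_j(\ell',\chi)$ with $B_j(x) = B_j(x')$ and $\ell \neq \ell'$. I will derive a contradiction by combining a lower bound on $|\mathrm{d}B_j(0)(x-x')|$ with the $C^{1,\beta}$ Taylor remainder.

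Since $\ell \neq \ell'$, if $x \in P(n,\chi)$ and $x' \in P(n',\chi)$ then $\Pi_j n \neq \Pi_j n'$, so there exists $i_0 \in \mathcal{K}_j^c$ with $n_{i_0} \neq n'_{i_0}$. From the definition \eqref{e:Pkdefn} of $P(n,\chi)$ and the hyperplane identification \eqref{e:Rdhyperplanes}, I may write $x = v + s\,\eta_{i_0}$ and $x' = v' + s'\,\eta_{i_0}$ where $v,v' \in \langle a_k : k \neq i_0\rangle$, $\eta_{i_0} := \star\bigwedge_{k\neq i_0} a_k$, and $s \in J(i_0,n_{i_0},\chi_{i_0})$, $s' \in J(i_0,n'_{i_0},\chi_{i_0})$. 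A direct check against \eqref{e:J0defn}, \eqref{e:J1defn} and the pigeonhole spacing \eqref{e:pigeonpara}, combined with the constraint on $\delta_0$ from \eqref{e:R0defn}, shows that these two intervals are separated by at least $\tfrac{1}{3}\delta^{\alpha_1}$; the case $\chi_{i_0}=1$ even gives separation of order $\delta^{\alpha_0}$. In particular $|s - s'| \geq \tfrac{1}{3}\delta^{\alpha_1}$.

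The next step is to convert this separation along $\eta_{i_0}$ into a lower bound on $|\mathrm{d}B_j(0)(x-x')|$. Expanding in the basis $\{a_k\}_{k=1}^d$, the vector $v - v'$ lies in $\langle a_k : k \neq i_0\rangle$ and so contributes no $a_{i_0}$ coefficient, while $\eta_{i_0} = \sum_k c_k a_k$ has $|c_{i_0}| \geq 1/2$: this follows because $\eta_{i_0}$ is Euclidean-orthogonal to $\langle a_k : k \neq i_0\rangle$ while the $a_k$ lie within $10^{-d}$ of the standard basis by \eqref{e:tildea_k}, so a short perturbation argument gives $\eta_{i_0}$ within $O(10^{-d})$ of $\pm e_{i_0}$ and hence $c_{i_0}$ within $O(10^{-d})$ of $\pm 1$. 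Therefore the $a_{i_0}$-coefficient of $x - x'$ has magnitude $\geq \tfrac{1}{6}\delta^{\alpha_1}$. Since $i_0 \in \mathcal{K}_j^c$, the restriction of $\mathrm{d}B_j(0)$ to $\langle a_k : k \in \mathcal{K}_j^c\rangle$ is a linear isomorphism onto $\mathbb{R}^{d_j}$, and by \eqref{e:Pi_j} and \eqref{e:tildea_k} the images $\{\mathrm{d}B_j(0)a_k : k \in \mathcal{K}_j^c\}$ form a basis of $\mathbb{R}^{d_j}$ lying close to the standard orthonormal basis. This basis is well conditioned, so $|\mathrm{d}B_j(0)(x-x')|$ dominates a constant times the $a_{i_0}$-coefficient of $x-x'$, giving $|\mathrm{d}B_j(0)(x-x')| \geq c_d\,\delta^{\alpha_1}$ for some dimensional constant $c_d > 0$.

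Finally, the $C^{1,\beta}$ hypothesis $\|B_j\|_{C^{1,\beta}} \leq \kappa$ on $Q(x_0,\delta_0)$ together with $|x - x'| \leq C_d \delta$ yields the Taylor estimate
$$
|B_j(x) - B_j(x') - \mathrm{d}B_j(0)(x-x')| \leq \kappa \delta^\beta |x - x'| \leq C_d \kappa \delta^{1+\beta},
$$
and combining this with $B_j(x) = B_j(x')$ yields $|\mathrm{d}B_j(0)(x-x')| \leq C_d \kappa \delta^{1+\beta}$. Since $\alpha_1 < 1 + \beta$, this contradicts the lower bound $\geq c_d \delta^{\alpha_1}$ provided $\delta \leq \delta_0$ with $c_d$ in \eqref{e:R0defn} chosen sufficiently small. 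The main obstacle is the middle step: the full Euclidean distance $|x - x'|$ is only controlled by $\delta$, which is much larger than the buffer width $\delta^{\alpha_1}$, so we cannot afford to measure $|\mathrm{d}B_j(0)(x-x')|$ by a generic norm inequality. The argument survives only because $v - v'$ contributes nothing to the $a_{i_0}$-coefficient, so the $\delta^{\alpha_1}$ separation in that single coordinate passes through $\mathrm{d}B_j(0)$ intact without being swamped by the $O(\delta)$ contributions from the other directions.
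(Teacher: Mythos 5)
Your argument is correct and is essentially the paper's own proof: both proceed by contradiction, identify an index $i_0\in\mathcal{K}_j^c$ with $n_{i_0}\neq n_{i_0}'$, use the buffer-zone construction to extract a $\gtrsim\delta^{\alpha_1}$ separation along the direction $\star\bigwedge_{k\neq i_0}a_k$, and play this against a $\lesssim\kappa\delta^{1+\beta}$ bound coming from $C^{1,\beta}$ regularity. The paper's version is slightly cleaner in one place: rather than pushing the vector $x-x'$ through the full linear map $\mathrm{d}B_j(0)$ (which forces you to argue that the restriction of $\mathrm{d}B_j(0)$ to $\langle a_k:k\in\mathcal{K}_j^c\rangle$ is a well-conditioned isomorphism so the single large coordinate survives), it simply pairs $x-y$ with the fixed vector $\star\bigwedge_{k\neq i}a_k$. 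This is a \emph{scalar} linear functional that annihilates $\langle a_k:k\neq i\rangle$ by orthogonality, so the $O(\delta)$-sized contributions in the other directions vanish identically and no conditioning argument is needed. The paper also packages the Taylor-remainder step through the factorisation $B_j=\mathrm{d}B_j(0)\circ\Phi_j$ of Lemma~\ref{l:Phi} (so $\Phi_j(x)-\Phi_j(y)\in\ker\mathrm{d}B_j(0)$ and hence is orthogonal to $\star\bigwedge_{k\neq i}a_k$), whereas you apply the $C^{1,\beta}$ bound directly; these are equivalent, though $\Phi_j$ is reused in Lemma~\ref{l:slabs} so the paper's choice is economical.
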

To prove Proposition \ref{p:disjoint} we use the following.
\begin{lemma} \label{l:Phi}
For each $1 \leq j \leq m$ there exists a map $\Phi_j : \mathbb{R}^d
\rightarrow \mathbb{R}^d$ such that
\begin{enumerate}
\item[(i)]  $\Phi_j(0) = 0$ and $\mathrm{d}\Phi_j(0)$ is equal to the
identity matrix $I_d$,

\item[(ii)]  $B_j = \mathrm{d}B_j(0) \circ \Phi_j$,

\item[(iii)]  $\|\mathrm{d}\Phi_j(x) - \mathrm{d}\Phi_j(y)\| \leq
2\kappa|x-y|^\beta$ for each $x,y \in Q$,

\item[(iv)] $|x - \Phi_j(x)| \leq 2d\kappa \delta^{1+\beta}$ for
each $x \in Q$.
\end{enumerate}
\end{lemma}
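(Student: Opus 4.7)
The plan is to construct $\Phi_j$ by factoring $B_j$ through the direct sum decomposition $\mathbb{R}^d = V_j \oplus W_j$, where $V_j := \ker \mathrm{d}B_j(0) = \langle a_k : k \in \mathcal{K}_j\rangle$ and $W_j := \langle a_k : k \in \mathcal{K}_j^c\rangle$; these are indeed complementary subspaces since by \eqref{e:tildea_k} the vectors $\{a_k\}_{k=1}^d$ form a basis close to the standard basis. Let $L_j$ denote the restriction $\mathrm{d}B_j(0)|_{W_j}$, which is a linear isomorphism from $W_j$ onto $\mathbb{R}^{d_j}$, and let $P_j^V, P_j^W$ denote the projections associated with the decomposition. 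I would then define
$$
\Phi_j(x) = P_j^V x + L_j^{-1} B_j(x).
$$

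Properties (i) and (ii) follow directly from the construction. Applying $\mathrm{d}B_j(0)$ to $\Phi_j(x)$ annihilates the first term and returns $B_j(x)$ from the second, giving (ii). Since $B_j(0) = 0$, one has $\Phi_j(0) = 0$. Differentiating yields
$$
\mathrm{d}\Phi_j(x) = P_j^V + L_j^{-1} \circ \mathrm{d}B_j(x),
$$
and at $x=0$ the map $L_j^{-1} \circ \mathrm{d}B_j(0)$ agrees with the identity on $W_j$ and vanishes on $V_j$, hence equals $P_j^W$. Thus $\mathrm{d}\Phi_j(0) = P_j^V + P_j^W = I_d$, establishing (i).

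For (iii), subtraction gives
$$
\mathrm{d}\Phi_j(x) - \mathrm{d}\Phi_j(y) = L_j^{-1}\bigl(\mathrm{d}B_j(x) - \mathrm{d}B_j(y)\bigr),
$$
and the bound $\|B_j\|_{C^{1,\beta}} \leq \kappa$ reduces matters to showing $\|L_j^{-1}\| \leq 2$. This is the main technical point: expressing $L_j$ as a matrix in the basis $\{a_k\}_{k \in \mathcal{K}_j^c}$ for $W_j$ and the standard basis for $\mathbb{R}^{d_j}$, the hypotheses \eqref{e:Pi_j} and \eqref{e:tildea_k} imply that this matrix differs from the $d_j \times d_j$ identity by a perturbation of norm at most an absolute constant times $1/10^d$, so a Neumann series argument yields the required bound.

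For (iv), using that $x = P_j^V x + P_j^W x$ and $P_j^W x = L_j^{-1}(\mathrm{d}B_j(0)x)$ (the latter because $L_j^{-1}\mathrm{d}B_j(0)$ acts as $P_j^W$), one obtains the identity
$$
\Phi_j(x) - x = L_j^{-1}\bigl(B_j(x) - \mathrm{d}B_j(0)x\bigr).
$$
The Taylor remainder estimate
$$
|B_j(x) - \mathrm{d}B_j(0)x| = \left|\int_0^1 \bigl(\mathrm{d}B_j(tx) - \mathrm{d}B_j(0)\bigr) x\,\mathrm{d}t\right| \leq \kappa |x|^{1+\beta},
$$
combined with $|x| \leq \tfrac{\sqrt{d}}{2}\delta$ for $x \in Q(0,\delta)$ and $\|L_j^{-1}\| \leq 2$, yields (iv) with a constant depending only on $d$. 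The only step requiring genuine care is the operator norm estimate on $L_j^{-1}$; everything else is a formal consequence of the factorization.
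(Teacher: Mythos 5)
Your construction is essentially the same as the paper's --- both factor $B_j$ through a direct-sum decomposition of $\mathbb{R}^d$, carry the ``kernel-side'' component of $x$ along unchanged, and solve for the complementary component from the equation $\mathrm{d}B_j(0)\circ\Phi_j = B_j$ --- but there is one stylistic difference worth noting. You decompose along the $\{a_k\}$ basis, taking $V_j = \ker\mathrm{d}B_j(0)$ and $W_j = \langle a_k : k\in\mathcal{K}_j^c\rangle$, whereas the paper uses the coordinate subspaces $\langle e_k : k\in\mathcal{K}_j\rangle$ and $\langle e_k : k\in\mathcal{K}_j^c\rangle$ (which are only approximately the kernel and its complement). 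The advantage of the paper's choice is that the invertible map to be controlled, $\widetilde{I}_{d_j}$, is then a literal $d_j\times d_j$ submatrix of $\mathrm{d}B_j(0)$, so $\|\widetilde{I}_{d_j}-I_{d_j}\|\leq\|\mathrm{d}B_j(0)-\Pi_j\|\leq 1/100^d$ is immediate from \eqref{e:Pi_j} alone and $\|\widetilde{I}_{d_j}^{-1}\|\leq 2$ follows by a one-line Neumann series. In your version the corresponding bound $\|L_j^{-1}\|\leq 2$ is slightly more work: as you correctly flag, one must compare the (nearly but not exactly orthonormal) basis $\{a_k\}_{k\in\mathcal{K}_j^c}$ with the standard one, which brings in \eqref{e:tildea_k} as well as \eqref{e:Pi_j}. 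Your derivation of (iv) via the exact identity $\Phi_j(x)-x = L_j^{-1}\bigl(B_j(x)-\mathrm{d}B_j(0)x\bigr)$ is a clean shortcut; the paper instead deduces (iv) from (i), (iii) and the mean value theorem, which gives the same bound. Both routes are correct.
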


\begin{proof}
Let $\widetilde{I}_{d_j}$ be the invertible $d_j \times d_j$ matrix
obtained by deleting the $k$th column of $\mathrm{d}B_j(0)$ for each
$k \in \mathcal{K}_j$. For $k \in \mathcal{K}_j$ define the $k$th
component of $\Phi_j(x)$ to be $x_k$. Define the remaining $d_j$
components of $\Phi_j(x)$ by stipulating that the element of
$\mathbb{R}^{d_j}$ obtained by deleting the $k$th components of
$\Phi_j(x)$ for $k \in \mathcal{K}_j$ is equal to
\begin{equation*}
\widetilde{I}_{d_j}^{-1}\bigg( B_j(x) - \sum_{k \in \mathcal{K}_j} x_k \mathrm{d}B_j(0)e_k \bigg).
\end{equation*}
Then a direct computation verifies that Properties (i) and (ii) hold
for $\Phi_j$. Also,
\begin{equation*}
\|\mathrm{d}\Phi_j(x) - \mathrm{d}\Phi_j(y)\| = \| \widetilde{I}_{d_j}^{-1}(\mathrm{d}B_j(x) - \mathrm{d}B_j(y))\| \leq 2\kappa|x-y|^\beta,
\end{equation*}
since $\|\widetilde{I}_{d_j} - I_{d_j}\| \leq 1/10$, and therefore
(iii) holds. Finally, Property (iv) follows from Properties (i) and
(iii), and the mean value theorem.
\end{proof}

\begin{proof}[Proof of Proposition \ref{p:disjoint}]
Suppose $\ell \neq \ell'$ and, for a contradiction, suppose that $z
= B_j(x) = B_j(y)$ where $x \in T_j(\ell,\chi)$ and $y \in
T_j(\ell',\chi)$. Then $x \in P(n,\chi)$ and $y \in P(n',\chi)$ for
some $n,n' \in \mathbb{N}^{d}$ satisfying $\Pi_jn = \ell$ and
$\Pi_jn' = \ell'$. Since $\Pi_jn \neq \Pi_jn'$ there exists $i \in
\mathcal{K}_j^c$ such that $n_i \neq n_i'$.

By \eqref{e:Pkdefn} and \eqref{e:SigmaiJdefn} it follows that there
exist $s(x) \in J(i,n_i,\chi_i)$ and $s(y) \in J(i,n_i',\chi_i)$
such that
\begin{equation*}
\bigg \langle x,\star \bigwedge_{k \neq i} a_k \bigg \rangle =
s(x)\bigg|\star \bigwedge_{k \neq i} a_k \bigg|^2
\qquad \text{and} \qquad
\bigg \langle y,\star \bigwedge_{k \neq i} a_k \bigg \rangle =
s(y)\bigg|\star \bigwedge_{k \neq i} a_k \bigg|^2.
\end{equation*}
Therefore
\begin{equation*}
  \bigg| \bigg\langle x-y, \star\bigwedge_{k \neq i} a_k \bigg\rangle \bigg| =
|s(x) - s(y)| \bigg| \star\bigwedge_{k \neq i} a_k \bigg|^2 \geq \tfrac{1}{3}\delta^{\alpha_1}
\bigg| \star\bigwedge_{k \neq i} a_k \bigg|^2
\end{equation*}
where the inequality follows from \eqref{e:pigeonpara},
\eqref{e:J0defn} and \eqref{e:J1defn} since $n_i \neq n_i'$.

On the other hand, since $x$ and $y$ belong to the fibre
$B_j^{-1}(z)$, it follows from Lemma \ref{l:Phi}(ii) that $\Phi_j(x)$
and $\Phi_j(y)$ belong to $\mathrm{d}B_j(0)^{-1}(z)$ and thus
$\Phi_j(x) - \Phi_j(y) \in \ker \mathrm{d}B_j(0)$. Since $i \in \mathcal{K}_j^c$ and $\ker \mathrm{d}B_j(0) = \langle
\{ a_r : r \in \mathcal{K}_j\}\rangle$ the vector $\star
\bigwedge_{k \neq i} a_k$ belongs to the orthogonal complement of
$\ker \mathrm{d}B_j(0)$. Therefore,
\begin{align*}
\bigg\langle x-y, \star \bigwedge_{k \neq i} a_k \bigg\rangle =  \bigg\langle x - \Phi_j(x), \star \bigwedge_{k \neq i} a_k \bigg \rangle - \bigg\langle y -
\Phi_j(y), \star \bigwedge_{k \neq i} a_k \bigg\rangle,
\end{align*}
and so by the Cauchy--Schwarz inequality and Lemma \ref{l:Phi}(iv) it
follows that
\begin{equation*}
  \bigg|\bigg\langle x-y, \star \bigwedge_{k \neq i} a_k \bigg\rangle\bigg| \leq
4d\kappa \delta^{1+\beta}\bigg| \star \bigwedge_{k \neq i} a_k \bigg|.
\end{equation*}
Since $| \star \bigwedge_{k \neq i} a_k| \geq 1/2$ we conclude that
$24d\kappa \delta^{1+\beta} \geq \delta^{\alpha_1}$. For a
sufficiently small choice of $c_d$, this is our desired
contradiction.
\end{proof}

\vspace{0.5cm} \textbf{Step IV: The conclusion via the discrete inequality}

Using the decomposition in Step II,
\begin{equation*}
  \int_{Q} \prod_{j=1}^m f_j(B_j(x))^{\frac{1}{m-1}}\,\mathrm{d}x =
\sum_{\chi \in \{0,1\}^d} \sum_{n \in \mathbb{N}^d} \int_{P(n,\chi)} \prod_{j=1}^m f_j(B_j(x))^{\frac{1}{m-1}}\,\mathrm{d}x.
\end{equation*}
By \eqref{e:tildea_k},
\begin{equation*}
\bigg|\star \bigwedge_{k \neq i} a_k - e_i \bigg| \leq \frac{1}{10},
\end{equation*}
and thus each $P(n,\chi)$ is contained in an axis-parallel cube with
sidelength equal to $2\delta^{\alpha_0}$.

\subsection*{The main term: $\chi = 0$} It follows that
\begin{align*}
\sum_{n \in \mathbb{N}^d} \int_{P(n,0)} \prod_{j=1}^m f_j(B_j(x))^{\frac{1}{m-1}}\,\mathrm{d}x
& \leq C(2\delta^{\alpha_0},M) \sum_{n \in \mathbb{N}^d} \prod_{j=1}^m \left( \int_{B_j(P(n,0))} f_j \right)^{\frac{1}{m-1}} \\
& \leq C(2\delta^{\alpha_0},M) \sum_{n \in \mathbb{N}^d} \prod_{j=1}^m F_j(\Pi_jn)^{\frac{1}{m-1}}
\end{align*}
where $F_j : \mathbb{N}^{d_j} \rightarrow [0,\infty)$ is given by
\begin{equation*}
F_j(\ell) = \int_{B_j(T_j(\ell,0))} f_j.
\end{equation*}
Hence, by \eqref{e:discreteFinner},
\begin{equation*}
  \sum_{n \in \mathbb{N}^d} \int_{P(n,0)} \prod_{j=1}^m f_j(B_j(x))^{\frac{1}{m-1}}\,\mathrm{d}x \leq
C(2\delta^{\alpha_0},M) \prod_{j=1}^m \| F_j \|_{\ell^1(\mathbb{N}^{d_j})}^{\frac{1}{m-1}}.
\end{equation*}
Consequently, by Proposition \ref{p:disjoint},
\begin{equation} \label{e:Lambda0}
  \sum_{n \in \mathbb{N}^d} \int_{P(n,0)} \prod_{j=1}^m f_j(B_j(x))^{\frac{1}{m-1}}\,\mathrm{d}x
\leq C(2\delta^{\alpha_0},M) \prod_{j=1}^m \left( \int_{\mathbb{R}^{d_j}} f_j \right)^{\frac{1}{m-1}}.
\end{equation}
\subsection*{The remaining terms: $\chi \neq 0$}
To allow us to capitalise on the pigeonholing in Step II we need the
following.
\begin{lemma} \label{l:slabs} For each $1 \leq i \leq d$ we have
\begin{equation*}
\mathrm{d}B_{\sigma(i)}(0)^{-1} \Sigma(i,J(i,n_i,1)) \cap Q \subseteq B_{\sigma(i)}^{-1}
\Sigma(i,[s^{(i)}_{n_i}, s^{(i)}_{n_i} + \delta^{\alpha_1}])  \cap Q.
\end{equation*}
\end{lemma}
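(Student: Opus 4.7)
The plan is to factor $B_{\sigma(i)} = \mathrm{d}B_{\sigma(i)}(0) \circ \Phi_{\sigma(i)}$ via Lemma \ref{l:Phi}(ii) and then compare the two slab parameters---the one produced by the linearisation applied to $x$, and the one produced by the linearisation applied to $\Phi_{\sigma(i)}(x)$. The closeness $|x - \Phi_{\sigma(i)}(x)| \leq 2d\kappa\delta^{1+\beta}$ from Lemma \ref{l:Phi}(iv) translates directly into closeness of these parameters, and the $\delta^{\alpha_1}/3$-sized gap built into the definition \eqref{e:J1defn} of $J(i,n_i,1)$ is precisely what is needed to absorb this error, given that $\alpha_1 < 1+\beta$.

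Concretely, I would write $w := \star \bigwedge_{k \neq i} a_k$. By \eqref{e:Rdhyperplanes}, a point $x \in \mathbb{R}^d$ satisfies $\mathrm{d}B_{\sigma(i)}(0)x \in \Sigma(i,\{s\})$ if and only if $\langle x, w\rangle = s|w|^2$. Hence if $x \in \mathrm{d}B_{\sigma(i)}(0)^{-1}\Sigma(i,J(i,n_i,1)) \cap Q$, there is some $s \in J(i,n_i,1)$ with $\langle x, w\rangle = s|w|^2$. Lemma \ref{l:Phi}(ii) gives $B_{\sigma(i)}(x) = \mathrm{d}B_{\sigma(i)}(0)\Phi_{\sigma(i)}(x)$, and by the same identification $B_{\sigma(i)}(x) \in \Sigma(i,\{s'\})$ for the parameter
\[
s' = \frac{\langle \Phi_{\sigma(i)}(x), w\rangle}{|w|^2}.
\]
Using the Cauchy--Schwarz inequality, Lemma \ref{l:Phi}(iv), and the lower bound $|w| \geq 1/2$ (a consequence of \eqref{e:tildea_k} already exploited in the proof of Proposition \ref{p:disjoint}), one obtains
\[
|s - s'| \leq \frac{|x - \Phi_{\sigma(i)}(x)|}{|w|} \leq 4d\kappa\delta^{1+\beta}.
\]
Combined with $s \in (s_{n_i}^{(i)} + \tfrac{1}{3}\delta^{\alpha_1}, s_{n_i}^{(i)} + \tfrac{2}{3}\delta^{\alpha_1}]$, this places $s' \in [s_{n_i}^{(i)}, s_{n_i}^{(i)} + \delta^{\alpha_1}]$ as soon as $12d\kappa\delta^{1+\beta-\alpha_1} \leq 1$, which holds for all $\delta \leq \delta_0$ by the choice of $\delta_0$ in \eqref{e:R0defn}, provided $c_d$ is taken sufficiently small.

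There is no genuine obstacle: the lemma is essentially a quantitative restatement of the fact that $B_{\sigma(i)}$ deviates from its linearisation by $O(\delta^{1+\beta})$, which is dominated by the $\delta^{\alpha_1}$-scale of the buffer zones because $\alpha_1 < 1+\beta$. The only care required is in passing between the intrinsic definitions of the hyperplanes $\Sigma(i,\{s\})$ and their inner-product parametrisations via $w$, and in confirming the lower bound on $|w|$ from \eqref{e:tildea_k}.
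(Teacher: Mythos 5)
Your proof is correct and follows essentially the same route as the paper: factor $B_{\sigma(i)}$ through $\Phi_{\sigma(i)}$ via Lemma \ref{l:Phi}(ii), parametrise the slabs through the inner product with $w = \star\bigwedge_{k\neq i} a_k$, bound the shift $|s-s'|$ by $4d\kappa\delta^{1+\beta}$ using Cauchy--Schwarz, Lemma \ref{l:Phi}(iv) and $|w|\geq 1/2$, and absorb it into the $\delta^{\alpha_1}/3$ margin because $\alpha_1<1+\beta$. The only cosmetic difference is that the paper writes $\Phi_{\sigma(i)}(x)-x = y' + s'w$ with $y'\in\langle\{a_k:k\neq i\}\rangle$ and bounds $|s'|$, whereas you compute the slab parameters of $x$ and $\Phi_{\sigma(i)}(x)$ directly and bound their difference; these are the same computation.
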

Note here that $[s^{(i)}_{n_i}, s^{(i)}_{n_i} + \delta^{\alpha_1}]$ is simply the ``concentric triple"
of $J(i,n_i,1)$.
\begin{proof}
Suppose $x \in Q$ satisfies $\mathrm{d}B_{\sigma(i)}(0)x \in
\Sigma(i,J(i,n_i,1))$ so that
\begin{equation} \label{e:dBj0x}
\mathrm{d}B_{\sigma(i)}(0)x = \mathrm{d}B_{\sigma(i)}(0)y + s\mathrm{d}B_{\sigma(i)}(0) \bigg( \star \bigwedge_{k \neq i} a_k \bigg)
\end{equation}
for some $s \in [s^{(i)}_{n_i} + \tfrac{1}{3}\delta^{\alpha_1},
s^{(i)}_{n_i} + \tfrac{2}{3}\delta^{\alpha_1}]$ and $y \in \langle
\{a_k : k \neq i\} \rangle$, by \eqref{e:J1defn} and
\eqref{e:SigmaiJdefn}. By Lemma \ref{l:Phi}(ii),
\begin{equation} \label{e:Bjx}
B_{\sigma(i)}(x) = \mathrm{d}B_{\sigma(i)}(0)x + \mathrm{d}B_{\sigma(i)}(0)(\Phi_{\sigma(i)}(x) - x).
\end{equation}
Now $\Phi_{\sigma(i)}(x) - x = y' + s' \star
\bigwedge_{k \neq i} a_k$ for some $s' \in \mathbb{R}$
and $y' \in \langle \{a_k : k \neq i\} \rangle$, and thus
$$
\bigg \langle \Phi_{\sigma(i)}(x) - x, \star \bigwedge_{k \neq i} a_k \bigg \rangle = s' \bigg|\star \bigwedge_{k \neq i} a_k \bigg|^2.
$$
Since $|\star \bigwedge_{k \neq i} a_k| \geq 1/2$, and by the
Cauchy--Schwarz inequality and Lemma \ref{l:Phi}(iv), it follows
that $ |s'| \leq 4d\kappa\delta^{1+\beta}$. Now $s +
s' \in [s^{(i)}_{n_i},s^{(i)}_{n_i}+\delta^{\alpha_1}]$
for a sufficiently small choice of $c_d$. Therefore, by
\eqref{e:dBj0x} and \eqref{e:Bjx}, $B_{\sigma(i)}(x) \in
\Sigma(i,[s^{(i)}_{n_i},s^{(i)}_{n_i}+\delta^{\alpha_1}])$ as
required.
\end{proof}
Fix $\chi \neq 0$ and any $i$ such that $\chi_i = 1$. As above for
the main term, it follows from \eqref{e:discreteFinner} that
\begin{equation*}
\sum_{n \in \mathbb{N}^d} \int_{P(n,\chi)} \prod_{j=1}^m f_j(B_j(x))^{\frac{1}{m-1}} \,\mathrm{d}x
\leq C(2\delta^{\alpha_0},M) \prod_{j=1}^m \|F_j\|_{\ell^1(\mathbb{N}^{d_j})}^{\frac{1}{m-1}}
\end{equation*}
where now
\begin{equation*}
F_j(\ell) = \int_{B_j(T_j(\ell,\chi))} f_j.
\end{equation*}
By Proposition \ref{p:disjoint} it follows that
\begin{equation*}
\sum_{n \in \mathbb{N}^d} \int_{P(n,\chi)} \prod_{j=1}^m f_j(B_j(x))^{\frac{1}{m-1}} \,\mathrm{d}x
\leq C(2\delta^{\alpha_0},M) \| F_{\sigma(i)} \|_{\ell^1(\mathbb{N}^{d_{\sigma(i)}})}^{\frac{1}{m-1}} \prod_{j \neq \sigma(i)}
\bigg( \int_{\mathbb{R}^{d_j}} f_j  \bigg)^{\frac{1}{m-1}}
\end{equation*}
and thus it suffices to show that
\begin{equation} \label{e:bitofgain}
\| F_{\sigma(i)} \|_{\ell^1(\mathbb{N}^{d_{\sigma(i)}})} \leq 4\delta^{\alpha_1-\alpha_0}.
\end{equation}

To see \eqref{e:bitofgain}, first set $j = \sigma(i)$. Given the
choice of notation in Step II, it is convenient to write
\begin{equation*}
\| F_j \|_{\ell^1(\mathbb{N}^{d_j})} = \sum_{\ell \in \mathbb{N}^{d_j}} \int_{B_j(T_j(\ell,\chi))} f_j =
\sum_{\substack{n_k : \\ k \in \mathcal{K}_{j}^c}} \int_{B_{j}(T_{j}(\Pi_{j}n,\chi))} f_{j}.
\end{equation*}
Now, since $i \in \mathcal{K}_{j}^c$ we may write
\begin{align*}
\| F_j \|_{\ell^1(\mathbb{N}^{d_j})} = \sum_{n_i} \sum_{\substack{n_k : \\ k \in \mathcal{K}_{j}^c \setminus \{i\}}}
\int_{B_{j}(T_{j}(\Pi_{j}n,\chi))} f_{j}.
\end{align*}
By Lemma \ref{l:slabs} it follows that
\begin{align*}
  \bigcup_{\substack{n_k : \\ k \in \mathcal{K}_{j}^c \setminus \{i\}}} B_{j}(T_{j}(\Pi_{j}n,\chi))
\subseteq \Sigma(i,[s^{(i)}_{n_i},s^{(i)}_{n_i}+\delta^{\alpha_1}])  \cap Q.
\end{align*}
Therefore, by Proposition \ref{p:disjoint} and \eqref{e:pigeonnorm},
\begin{align*}
\sum_{\substack{n_k : \\ k \in \mathcal{K}_{j}^c \setminus \{i\}}}
\int_{B_{j}(T_{j}(\Pi_{j}n,\chi))} f_{j} &
\leq \int_{\Sigma(i,[s^{(i)}_{n_i},s^{(i)}_{n_i}+\delta^{\alpha_1}])} f_{j}\chi_{Q} \\
& \leq 4\delta^{\alpha_1-\alpha_0}
\int_{\Sigma(i,[s^{(i)}_{n_i-1} + \frac{1}{2}\delta^{\alpha_0},s^{(i)}_{n_i-1} + \delta^{\alpha_0}])} f_{j}\chi_{Q},
\end{align*}
from which \eqref{e:bitofgain} follows by summing in $n_i$ and
disjointness. This completes the proof of Lemma \ref{l:main}.

\begin{remark}
  In Theorem \ref{t:main}, the smoothness assumption that each mapping $B_j$ belongs to $C^{1,\beta}$ may be weakened. Suppose that each $B_j$ is a $C^1$ submersion in a neighbourhood of $x_0$ such that the modulus of continuity of $\mathrm{d}B_j$, which we denote by $\omega_{\mathrm{d}B_j}$, satisfies
  \begin{equation*}
    \omega_{\mathrm{d}B_j}(\delta) \leq \kappa\Omega(\delta),
  \end{equation*}
  where, for some $0 < \eta < 1$, $\Omega$ satisfies the summability condition
  \begin{equation} \label{e:sum}
    \sum_{r=0}^\infty \Omega(2^{-r})^{1-\eta} < \infty
  \end{equation}
  and $\kappa$ is a positive constant.
  Without significantly altering the above proof, one can show that Theorem \ref{t:main} holds under such a smoothness hypothesis. Of course, Theorem \ref{t:main} corresponds to $\Omega(\delta) = \delta^\beta$ with $\beta>0$. It is of course easy to choose $\Omega$ satisfying $\delta^\beta = o(\Omega(\delta))$ as $\delta \rightarrow 0$ for all $\beta>0$, and still satisfying \eqref{e:sum}; for example, $\Omega(\delta) = (\log 1/\delta)^{-2}$. Naturally, one pays for allowing a lower level of smoothness in the size of the neighbourhood on which the estimate in \eqref{linadmisnl} holds.
\end{remark}

\section{Proof of Corollary \ref{lemma6}} \label{section:corproof}
Without loss of generality we may suppose that there is a point $a$
belonging to a sufficiently small neighbourhood of the origin in
$(\mathbb{R}^{d-1})^{d-1}$ (depending on at most $d, \beta,
\varepsilon$ and $\kappa$) such that $F(a)=0$; otherwise the
neighbourhood $V$ in the statement of the corollary could be chosen
so that the left-hand side of \eqref{lemma6est} vanishes. By
considering a translation taking $a$ to the origin, we may suppose
that $a=0$. (Here we are using the uniformity claim relating to the
neighbourhood $V$.)

Furthermore, we may assume that
\begin{equation} \label{e:orthhyp}
\nabla_{u_j}F(0) = e_j,
\end{equation}
the $j$th standard basis vector in $\mathbb{R}^{d-1}$, for each $1
\leq j \leq d-1$. We shall see that the full generality of Corollary
\ref{lemma6} follows from this case by a change of variables.

Fix nonnegative $f_j \in L^{(d-1)'}(\mathbb{R}^{d-1})$, $1 \leq j
\leq m$. We proceed in a similar way to the proof of Proposition 7
of \cite{BCW}. Since $\partial_{(u_{d-1})_{d-1}}F(0) = 1$ it follows
that there exists a neighbourhood $W$ of the origin in
$\mathbb{R}^{d(d-2)}$ and a mapping $\eta : W \rightarrow
\mathbb{R}$ such that for each
\begin{equation*}
x = (u_1,\ldots,u_{d-2},(u_{d-1})_1,\ldots,(u_{d-1})_{d-2}) \in W
\end{equation*}
we have
\begin{equation} \label{e:etadefn}
  F(x,\eta(x)) = 0.
\end{equation}
The neighbourhood $W$ depends only on $\beta$ and $\kappa$, and the
mapping $\eta$ satisfies $\|\eta\|_{C^{1,\beta}} \leq
\widetilde{\kappa}$ for some constant $\widetilde{\kappa}$ which
depends only on $d, \beta$ and $\kappa$. Our claims follow from the
implicit function theorem in quantitative form. For completeness we
have included an adequate version in Appendix \ref{appendix:IFT}.

Let $B_j : W \rightarrow \mathbb{R}^{d-1}$ be given by
\begin{equation*}
  B_j(x) = (x_{(d-1)j-d+2}, \ldots, x_{(d-1)j})
\end{equation*}
for $1 \leq j \leq d-2$,
\begin{equation*}
  B_{d-1}(x) = (x_{(d-1)^2-d+2}, \ldots, x_{(d-1)^2-1}, \eta(x)),
\end{equation*}
and
\begin{equation*}
  B_d = B_1 + \cdots + B_{d-1}.
\end{equation*}
We claim that there exists a neighbourhood $U$ of the origin, with
$U \subset W$, depending only on $d, \beta$ and $\kappa$, and a
constant $C$ depending on $d$, such that
\begin{equation} \label{e:NBL}
  \int_U \prod_{j=1}^d f_j(B_j(x))\,\mathrm{d}x \leq C \prod_{j=1}^d \|f_j\|_{(d-1)'}.
\end{equation}
Since the subspaces $\ker \mathrm{d}B_1(0), \ldots, \ker
\mathrm{d}B_d(0)$ are such that at least one pair has a nontrivial
intersection, we cannot directly apply Theorem \ref{t:main} to
$\mathbf{B} = (B_j)$ in order to prove \eqref{e:NBL} (except in the
special case $d=3$ -- see \cite{BCW}).
It is, however, possible to construct mappings $B_j^\oplus :
\mathbb{R}^{d(d-2)} \rightarrow \mathbb{R}^{(d-1)(d-2)}$ for $1 \leq
j \leq d$ in block form so that
\begin{equation} \label{e:directsumapp}
  \bigoplus_{j=1}^d \ker \mathrm{d}B_j^\oplus(0) = \mathbb{R}^{d(d-2)}.
\end{equation}
We fix $1 \leq j \leq d$ and define $B_j^\oplus :
\mathbb{R}^{d(d-2)} \rightarrow \mathbb{R}^{(d-1)(d-2)}$ as follows.
Let $S^{(j)}$ be the $(d-2)$-tuple obtained by deleting $j$ and
$j+1$\,(mod $d$) from the $d$-tuple $(1,\ldots,d)$. \footnote{There
is some freedom in the choice of the $S^{(j)}$; we only require that
the components of each $S^{(j)}$ are distinct and that for each
fixed $k \in \{1,\ldots,d\}$ there are exactly $d-2$ occurrences of
$k$ over all the components of $S^{(1)},\ldots,S^{(d)}$.} Then
define $B_j^\oplus : \mathbb{R}^{d(d-2)} \rightarrow
\mathbb{R}^{(d-1)(d-2)}$ by
\begin{equation*}
  B_j^\oplus(x) = (B_{S^{(j)}_1}(x),\hdots, B_{S^{(j)}_{d-2}}(x)).
\end{equation*}

To see that \eqref{e:directsumapp} holds, we compute the required
kernels using the fact that
\begin{equation*}
  \ker \mathrm{d}B_j^\oplus(0) = \bigcap_{l = 1}^{d-2} \ker \mathrm{d}B_{S^{(j)}_l}(0)
\end{equation*}
and using straightforward considerations. In order to write these
down we write elements of $\mathbb{R}^{d(d-2)}$ as
\begin{equation*}
  (u_1,u_2,\ldots,u_{d-3},u_{d-2};\widetilde{u}_{d-1})
\end{equation*}
where each $u_j \in \mathbb{R}^{d-1}$ and $\widetilde{u}_{d-1} \in
\mathbb{R}^{d-2}$. Then, using \eqref{e:orthhyp} and
\eqref{e:etadefn}, we have
\begin{align*}
  \ker \mathrm{d}B_1^\oplus (0) & = \{ (u,-u,0,0,\ldots,0,0,0;0) : u \in \langle e_1-e_2 \rangle^\perp \}, \\
  \ker \mathrm{d}B_2^\oplus (0) & = \{ (0,u,-u,0,\ldots,0,0,0;0) : u \in \langle e_2-e_3 \rangle^\perp\}, \\
  \vdots \\
  \ker \mathrm{d}B_{d-3}^\oplus (0) & = \{ (0,0,0,0,\ldots,0,u,-u;0) : u \in \langle e_{d-3}-e_{d-2} \rangle^\perp \}, \\
  \ker \mathrm{d}B_{d-2}^\oplus (0) & = \{ (0,0,0,0,\ldots,0,0,u;(-u_1,\ldots,-u_{d-2})) : u \in \langle e_{d-2}-e_{d-1} \rangle^\perp \}, \\
  \ker \mathrm{d}B_{d-1}^\oplus (0) & = \{ (0,0,0,0,\ldots,0,0,0;\widetilde{u}) : \widetilde{u} \in \mathbb{R}^{d-2}\}, \\
  \ker \mathrm{d}B_d^\oplus (0) & = \{ (u,0,0,0,\ldots,0,0,0;0) : u \in \langle e_1 \rangle^\perp \}.
\end{align*}
An elementary calculation now shows that \eqref{e:directsumapp}
holds.

Consequently, it follows from Theorem \ref{t:main} that there exists
a neighbourhood $U$ of the origin, depending on $d, \beta$ and
$\kappa$, and a constant $C$ depending on $d$, such that
\begin{equation} \label{e:gj}
  \int_U \prod_{j=1}^d g_j(B_j^\oplus(x))\,\mathrm{d}x \leq C\prod_{j=1}^d \|g_j\|_{d-1}
\end{equation}
for all $g_j \in L^{d-1}(\mathbb{R}^{(d-1)(d-2)})$. Now, if
$f_j^\otimes \in L^{d-1}(\mathbb{R}^{(d-1)(d-2)})$ is given by
\begin{equation*}
  f_j^\otimes = \bigotimes_{l = 1}^{d-2} f_{S^{(j)}_l}^{1/(d-2)}
\end{equation*}
then by construction,
\begin{equation*}
   \int_{U} \prod_{j=1}^d f_j^\otimes(B_j^\oplus(x))\,\mathrm{d}x = \int_{U} \prod_{j=1}^d f_j(B_j(x))\,\mathrm{d}x
\end{equation*}
and
\begin{equation*}
  \prod_{j=1}^d \|f_j^\otimes\|_{d-1} = \prod_{j=1}^d \|f_j\|_{(d-1)'}.
\end{equation*}
Thus, \eqref{e:NBL} follows immediately from \eqref{e:gj}.

Finally, by the mean value theorem, it is easy to see that there is
a neighbourhood $V$ of the origin in $(\mathbb{R}^{d-1})^{d-1}$,
depending only on $d, \beta$ and $\kappa$, such that
\begin{equation*}
\int_{V}f_1(u_1)\cdots f_{d-1}(u_{d-1})f_d(u_1+\cdots+u_{d-1})\delta(F(u))\,\mathrm{d}u \leq 2\int_U \prod_{j=1}^d f_j(B_j(x))\,\mathrm{d}x.
\end{equation*}

Hence, whenever $\nabla_{u_j}F(0) = e_j$ and $\|F\|_{C^{1,\beta}}
\leq \kappa$ there exists a neighbourhood $V$ of the origin in
$(\mathbb{R}^{d-1})^{d-1}$, depending only on $d, \beta$ and
$\kappa$, and a constant $C$ depending only on $d$, such that
\begin{equation} \label{e:orthcase}
\int_{V}f_1(u_1)\cdots f_{d-1}(u_{d-1})f_d(u_1+\cdots+u_{d-1})\delta(F(u))\,\mathrm{d}u \leq C \prod_{j=1}^d \|f_j\|_{(d-1)'}
\end{equation}
for all $f_j \in L^{(d-1)'}(\mathbb{R}^{d-1})$.

Now suppose that $F:(\mathbb{R}^{d-1})^{d-1}\rightarrow\mathbb{R}$
is such that $\|F\|_{C^{1,\beta}} \leq \kappa$ and
\begin{equation} \label{e:trans}
|\det(\nabla_{u_1}F(0),\ldots,\nabla_{u_{d-1}}F(0))| > \varepsilon.
\end{equation}
Let $A^\oplus$ be the block diagonal $(d-1)^2 \times (d-1)^2$ matrix
with $d-1$ copies of the matrix
\begin{equation*}
  A = (\nabla_{u_1}F(0),\ldots,\nabla_{u_{d-1}}F(0))^T
\end{equation*}
along the diagonal. Then, by the change of variables $u \mapsto
A^\oplus u$ it follows that
\begin{align*}
& \int_{V}f_1(u_1)\cdots f_{d-1}(u_{d-1})f_d(u_1+\cdots+u_{d-1})\delta(F(u))\,\mathrm{d}u \\
& = |\det(A)|^{-(d-1)} \int_{A^\oplus(V)} \widetilde{f}_1(u_1)\cdots \widetilde{f}_{d-1}(u_{d-1})\widetilde{f}_d(u_1+\cdots+u_{d-1})\delta(\widetilde{F}(u))\,\mathrm{d}u
\end{align*}
where $\widetilde{f}_j = f_j \circ A^{-1}$ and $\widetilde{F} = F
\circ (A^\oplus)^{-1}$. The neighbourhood $V$ of the origin shall be
chosen momentarily.

By \eqref{e:trans} it follows that the norm of $A^{-1}$ is bounded
above by a constant depending on only $d$, $\varepsilon$ and
$\kappa$. It follows that the same conclusion holds for the
$C^{1,\beta}$ norm of $\widetilde{F}$. Since, by construction,
$\nabla_{u_j}\widetilde{F}(0) = e_j$, and by \eqref{e:orthcase}, it
follows that there exists a neighbourhood $V$, depending on only $d,
\beta,\varepsilon$ and $\kappa$, and a constant $C$ depending only
on $d$, such that
\begin{align*}
\int_{A^\oplus(V)} \widetilde{f}_1(u_1)\cdots \widetilde{f}_{d-1}(u_{d-1})\widetilde{f}_d(u_1+\cdots+u_{d-1})\delta(\widetilde{F}(u))\,\mathrm{d}u \leq C\prod_{j=1}^d \|\widetilde{f_j}\|_{(d-1)'}.
\end{align*}
Therefore, by \eqref{e:trans},
\begin{align*}
& \int_{V}f_1(u_1)\cdots f_{d-1}(u_{d-1})f_d(u_1+\cdots+u_{d-1})\delta(F(u))\,\mathrm{d}u \\
& \qquad \qquad \leq C|\det(A)|^{-1/(d-1)} \prod_{j=1}^d \|f_j\|_{(d-1)'} \leq C\varepsilon^{-1/(d-1)} \prod_{j=1}^d \|f_j\|_{(d-1)'}. \\
\end{align*}
This concludes the proof.

\section{Applications to harmonic analysis} \label{section:applications}

\subsection{Multilinear singular convolution inequalities}\label{bhtsect}

Given three transversal and sufficiently regular hypersurfaces in
$\mathbb{R}^3$, the convolution of two $L^2$ functions supported on
the first and second hypersurface, respectively, restricts to a
well-defined $L^2$ function on the third. Under a $C^{1,\beta}$
regularity hypothesis and further scaleable assumptions, this was
proved by Bejenaru, Herr and Tataru in \cite{BHT}.
We note that the inequality underlying this restriction phenomenon
also follows from the nonlinear Loomis--Whitney inequality in
\cite{BCW}; the precise versions of the underlying inequalities
differ in \cite{BHT} and \cite{BCW} because a stronger regularity
assumption is made in \cite{BCW} and a uniform transversality
assumption is made in \cite{BHT}. Here we show that natural higher
dimensional analogues of this phenomenon may be deduced from
Corollary \ref{lemma6}.

For $d\geq 2$ and $1\leq j\leq d$, let $U_j$ be a compact subset of
$\mathbb{R}^{d-1}$ and $\Sigma_j:U_j\rightarrow\mathbb{R}^d$
parametrise a $C^{1,\beta}$ codimension-one submanifold $S_j$ of
$\mathbb{R}^d$.
Let the measure $\mathrm{d}\sigma_j$ on $\mathbb{R}^d$ supported on
$S_j$ be given by
$$
\int_{\mathbb{R}^d}\psi(x)\,\mathrm{d}\sigma_j(x)=\int_{U_j}\psi(\Sigma_j(x'))\,\mathrm{d}x',$$
where $\psi$ denotes an arbitrary Borel measurable function on
$\mathbb{R}^d$.
\begin{theorem}\label{bhthigher}
Suppose that the submanifolds $S_1,\hdots,S_d$ are transversal in a
neighbourhood of the origin, $1 \leq q \leq \infty$ and $p' \leq
(d-1)q'$. Then there exists a constant $C$ such that
\begin{equation}\label{bhtest}
\|f_1\mathrm{d}\sigma_1*\cdots *f_d\mathrm{d}\sigma_d\|_{L^q(\mathbb{R}^d)} \leq C \prod_{j=1}^d\|f_j\|_{L^{p}(\mathrm{d}\sigma_j)}
\end{equation}
for all $f_j\in L^{p}(\mathrm{d}\sigma_j)$ with support in a
sufficiently small neighbourhood of the origin.
\end{theorem}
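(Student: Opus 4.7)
The plan is to establish the two endpoints $(q,p)=(1,1)$ and $(q,p)=(\infty,(d-1)')$ of the admissible range, and then recover the full range $1\le q\le\infty$, $p'\le (d-1)q'$ by bilinear Riesz--Thorin interpolation combined with H\"older's inequality. A short computation with reciprocals verifies that interpolation between these two endpoints traces out the line $p'=(d-1)q'$ exactly, and the compact support of the $f_j$ supplies the usual inclusion between $L^p$ spaces on finite measure spaces to pass from this line to the slab $p'\le(d-1)q'$. The $(1,1)$ endpoint is immediate: each $f_j\,\mathrm{d}\sigma_j$ is a finite compactly supported measure with $\|f_j\,\mathrm{d}\sigma_j\|_{M(\mathbb{R}^d)}=\|f_j\|_{L^1(\mathrm{d}\sigma_j)}$, and the convolution of $d$ such measures has total variation at most the product of the total variations; absolute continuity of the result (needed to interpret the convolution as an $L^1(\mathbb{R}^d)$ function) follows from transversality, since the pushforward of Lebesgue measure on $(\mathbb{R}^{d-1})^d$ under $(u_1,\dots,u_d)\mapsto\sum_j\Sigma_j(u_j)$ has bounded density near transversal points.

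For the nontrivial endpoint $(q,p)=(\infty,(d-1)')$, I fix $x\in\mathbb{R}^d$ and evaluate the convolution at $x$. Since the normals $\nu_j(0)$ to the $S_j$ span $\mathbb{R}^d$ by transversality, a rotation of coordinates (together with a preliminary partition of unity, if needed) lets me assume that each $S_j$ near the origin is a graph $\Sigma_j(u_j)=(u_j,\phi_j(u_j))$ with $u_j\in\mathbb{R}^{d-1}$, $\phi_j\in C^{1,\beta}$ and $\phi_j(0)=0$. Writing $x=(\bar{x},x_d)$ with $\bar{x}\in\mathbb{R}^{d-1}$, the first $d-1$ scalar components of the delta in
$$
(f_1\mathrm{d}\sigma_1*\cdots*f_d\mathrm{d}\sigma_d)(x)=\int\prod_{j=1}^d f_j(\Sigma_j(u_j))\,\delta^{(d)}\bigg(x-\sum_{j=1}^d\Sigma_j(u_j)\bigg)\,\mathrm{d}u
$$
are used to eliminate $u_d=\bar{x}-u_1-\cdots-u_{d-1}$. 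Putting $\tilde f_j(v)=f_j(v,\phi_j(v))$ for $1\le j\le d-1$, $\tilde f_d(v)=f_d(\bar{x}-v,\phi_d(\bar{x}-v))$, and
$$
F_x(u_1,\dots,u_{d-1})=\sum_{j=1}^{d-1}\phi_j(u_j)+\phi_d\bigg(\bar{x}-\sum_{j=1}^{d-1}u_j\bigg)-x_d,
$$
the convolution at $x$ takes the form
$$
\int\tilde f_1(u_1)\cdots\tilde f_{d-1}(u_{d-1})\,\tilde f_d(u_1+\cdots+u_{d-1})\,\delta(F_x(u))\,\mathrm{d}u,
$$
which matches precisely the left-hand side of Corollary \ref{lemma6}.

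It remains to verify the hypotheses of Corollary \ref{lemma6} uniformly in $x$. The $C^{1,\beta}$ norm of $F_x$ is controlled by $\sum_j\|\phi_j\|_{C^{1,\beta}}$, uniformly for $x$ in a bounded neighbourhood of the origin. At $x=0$ one computes $\nabla_{u_j}F_0(0)=\nabla\phi_j(0)-\nabla\phi_d(0)$, and a row-reduction argument identifies $|\det(\nabla_{u_1}F_0(0),\dots,\nabla_{u_{d-1}}F_0(0))|$, up to sign, with the determinant of the matrix whose columns are the normals $\nu_j(0)=(-\nabla\phi_j(0),1)$; the latter is bounded below by transversality. Continuity in $x$ and the compact support of the $f_j$ (which confines $x$ to a bounded neighbourhood of the origin) combine to yield a uniform lower bound $\varepsilon>0$. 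Corollary \ref{lemma6} then gives the pointwise bound of the convolution at $x$ by a constant multiple of $\prod_j\|\tilde f_j\|_{(d-1)'}$, and the bounded Jacobians of the $\Sigma_j$ convert this to $\prod_j\|f_j\|_{L^{(d-1)'}(\mathrm{d}\sigma_j)}$, yielding the endpoint.

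The principal technical obstacle is precisely this uniform verification of the transversality determinant hypothesis of Corollary \ref{lemma6} over the full range of relevant base points $x$; once this is handled by the indicated continuity-and-compactness argument, the nontrivial endpoint drops out of Corollary \ref{lemma6}, and the full range of $(p,q)$ follows by the interpolation and H\"older arguments mentioned at the outset.
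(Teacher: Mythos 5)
Your proof is correct and follows essentially the same route as the paper: reduce to the $q=\infty$ endpoint by multilinear interpolation against the trivial $L^1$ bound (with H\"older passing from the line $p'=(d-1)q'$ to the slab), rotate the $S_j$ into graph form, integrate out one delta to put the convolution at a fixed point into exactly the form of Corollary \ref{lemma6}, and check the determinant hypothesis uniformly via the row-reduction relating $\det(\nabla_{u_j}F_x(0))$ to the determinant of the matrix of unit normals. The only cosmetic differences are that you are somewhat more explicit about why the $(1,1)$ endpoint yields an honest $L^1$ function (absolute continuity via transversality) and about the uniformity of the lower bound on the determinant in the base point $x$ — both of which the paper leaves implicit.
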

\begin{remark}
\begin{enumerate}
\item[(i)] By H\"older's inequality it suffices to prove Theorem \ref{bhthigher} when $p' = (d-1)q'$. One can
also verify that the exponents in Theorem \ref{bhthigher} are
optimal, as may be seen by taking $f_j$ to be the characteristic
function of a small cap on $S_j$. As such examples illustrate,
at this level of multilinearity, the transversality hypothesis
prevents any additional curvature hypotheses on the submanifolds
$S_j$ from giving rise to further improvement. See \cite{BCT}
for further discussion of such matters.
\item[(ii)] Certain bilinear versions of Theorem \ref{bhthigher} are well-known and discussed in detail in \cite{T}. In particular, it follows from \cite{TVV} that for transversal $S_1$ and $S_2$ (as above), which are smooth with nonvanishing gaussian curvature, there is a constant $C$ for which
    $$\|f_1\mathrm{d}\sigma_1*f_2\mathrm{d}\sigma_2\|_{L^2(\mathbb{R}^d)} \leq C\|f_1\|_{L^{\frac{4d}{3d-2}}(\mathrm{d}\sigma_1)}\|f_2\|_{L^{\frac{4d}{3d-2}}(\mathrm{d}\sigma_2)}.
    $$
    The exponent $\tfrac{4d}{3d-2}$ here is optimal given the
    $L^2$ norm on the left-hand side. The case $d=3$ of this
    inequality was obtained previously in \cite{MVV}. See for
    instance \cite{Bo2} for earlier manifestations of such
    inequalities.
\item[(iii)]
In particular, when $q=\infty$ inequality \eqref{bhtest} implies
that
\begin{equation*}\label{bhtest0}
f_1\mathrm{d}\sigma_1*\cdots *f_d\mathrm{d}\sigma_d(0)\leq C\prod_{j=1}^d\|f_j\|_{L^{(d-1)'}(\mathrm{d}\sigma_j)}.
\end{equation*}
By duality, this is equivalent to the statement that, provided
$f_1,\hdots,f_{d-1}$ have support restricted to a sufficiently
small fixed neighbourhood of the origin, then the multilinear
operator
$$(f_1,\hdots,f_{d-1})\mapsto f_1\mathrm{d}\sigma_1*\cdots* f_{d-1}\mathrm{d}\sigma_{d-1}\Bigl|_{S_d}$$
is bounded from $L^{(d-1)'}(\mathrm{d}\sigma_1)\times\cdots\times L^{(d-1)'}(\mathrm{d}\sigma_{d-1})$ to
$L^{d-1}(\mathrm{d}\sigma_d)$. For $d=3$ this is a local variant of the result in \cite{BHT}.
\item[(iv)] The proof of Theorem \ref{bhthigher} (below) leads to a stronger uniform statement, whereby
the sizes of the constant $C$ and neighbourhood of the origin may be taken to depend only on natural
transversality and smoothness parameters. We omit the details of this.
\end{enumerate}
\end{remark}

\begin{proof}[Proof of Theorem \ref{bhthigher}]
By multilinear interpolation and the trivial estimate
\begin{equation*}
\|f_1\mathrm{d}\sigma_1*\cdots *f_d\mathrm{d}\sigma_d\|_{L^1(\mathbb{R}^d)}\leq \prod_{j=1}^d\|f_j\|_{L^{1}(\mathrm{d}\sigma_j)},
\end{equation*}
it suffices to prove Theorem \ref{bhthigher} for $q=\infty$.

By considering a rotation in $\mathbb{R}^d$, we may assume without
loss of generality that the submanifolds $S_j$ are hypersurfaces;
i.e. given by $\Sigma_j(x')=(x',\phi_j(x'))$ for $C^{1,\beta}$
functions $\phi_j:U_j\rightarrow \mathbb{R}$. Now, for $f_j$
supported on $S_j$ for each $1\leq j\leq d$, and any
$y\in\mathbb{R}^d$ we may write
\begin{eqnarray*}
\begin{aligned}
&f_1\mathrm{d}\sigma_1*\cdots *f_d\mathrm{d}\sigma_d(y)\\&=
\int_{(\mathbb{R}^d)^d} \prod_{j=1}^d f_j(x_j)\delta(x_{jd}-\phi_j(x_j'))\delta(x_1+\cdots+x_d-y)\,\mathrm{d}x_1\cdots \mathrm{d}x_d\\
&=\int_{U_1\times\cdots\times U_d} \prod_{j=1}^d f_j(x_j',\phi_j(x_j'))\delta(x_1'+\cdots+x_d'-y')\delta(\phi_1(x_1')+\cdots+\phi_d(x_d')-y_d)\,\mathrm{d}x_1'\cdots \mathrm{d}x_d'\\
&=\int_{U_1\times\cdots\times U_d} \prod_{j=1}^{d}g_j(x_j')\delta(x_1'+\cdots+x_d'-y')\delta(\phi_1(x_1')+\cdots+\phi_d(x_d')-y_d)\,\mathrm{d}x_1'\cdots \mathrm{d}x_d'\\
&=\int_{U_1\times\cdots\times U_{d-1}}\prod_{j=1}^{d-1}g_j(x_j')\widetilde{g}_d(x_1'+\cdots+x_{d-1}')\delta(F(x_1',\hdots,x_{d-1}'))\,\mathrm{d}x_1'\cdots \mathrm{d}x_{d-1}'
\end{aligned}
\end{eqnarray*}
where
$$g_j(x_j'):=f_j(x_j',\phi_j(x_j')),\;\;\;\widetilde{g}_d(u):=g_d(y'-u)$$ and
$$F(x_1',\hdots,x_{d-1}')=\phi_1(x_1')+\cdots+\phi_{d-1}(x_{d-1}')+\phi_d(y'-(x_1'+\cdots+x_{d-1}'))-y_d.$$
Observe that $F\in C^{1,\beta}$ uniformly in $y$ belonging to a
sufficiently small neighbourhood of the origin, and that by the
transversality hypothesis (combined with the smoothness hypothesis),
\begin{displaymath}
\det(\nabla_{x_1'}F(0),\cdots,\nabla_{x_{d-1}'}F(0))=\det\left(
\begin{array}{cccc}
1& \cdots & 1 & 1\\
\nabla\phi_{1}(0) &
\cdots & \nabla\phi_{d-1}(0) & \nabla\phi_d(y')\\
\end{array}\right)
\not=0
\end{displaymath}
similarly uniformly. Theorem \ref{bhthigher} now follows by
Corollary \ref{lemma6}.
\end{proof}
Estimates of the type \eqref{bhtest} are intimately related to the
multilinear restriction theory for the Fourier transform, to which
we now turn.

\subsection{A multilinear Fourier extension inequality}

Very much as before, let $U$ be a compact neighbourhood of the
origin in $\mathbb{R}^{d-1}$ and $\Sigma:U\rightarrow\mathbb{R}^d$
parametrise a $C^{1,\beta}$ codimension-one submanifold $S$ of
$\mathbb{R}^d$. To the mapping $\Sigma$ we associate the operator
$\mathcal{E}$, given by
$$
\mathcal{E}g(\xi)=\int_{U}g(x)e^{i\langle \xi,\Sigma(x)\rangle}\,\mathrm{d}x;
$$
here $g\in L^1(U)$ and $\xi\in\mathbb{R}^d$. We note that the formal
adjoint $\mathcal{E}^*$ is given by the restriction
$\mathcal{E}^*f=\widehat{f}\circ\Sigma$, where
\,\,$\widehat{\;}$\,\, denotes the Fourier transform on
$\mathbb{R}^d$. The operator $\mathcal{E}$ is thus referred to as an
\emph{adjoint Fourier restriction operator} or \emph{Fourier
extension operator}.

Suppose that we have $d$ such extension operators
$\mathcal{E}_1,\hdots, \mathcal{E}_d$, associated with mappings
$\Sigma_1:U_1\rightarrow\mathbb{R}^d,\hdots,\Sigma_d:U_d\rightarrow\mathbb{R}^d$
and submanifolds $S_1,\hdots,S_d$. 

\begin{conjecture}[Multilinear Restriction \cite {BCW}, \cite{BCT}]\label{MLRC}
Suppose that the submanifolds $S_1,\hdots, S_d$ are transversal in a
neighbourhood of the origin, $q\geq\tfrac{2d}{d-1}$ and $p'\leq
\tfrac{d-1}{d}q$. Then there exists a constant $C$ for which
\begin{equation}\label{mr}
\Bigl\|\prod_{j=1}^{d}\mathcal{E}_{j}g_{j}\Bigr\|_{L^{q/d}(\mathbb{R}^d)}\leq
C\prod_{j=1}^{d} \|g_{j}\|_{L^{p}(U_j)}
\end{equation}
for all $g_{1},\hdots,g_{d}$ supported in a sufficiently small
neighbourhood of the origin.
\end{conjecture}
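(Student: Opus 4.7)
The plan is to follow the induction-on-scales programme of Bennett, Carbery and Tao \cite{BCT}, with Theorem \ref{t:main} providing the nonlinear multilinear Kakeya input that in \cite{BCT} had to be introduced as an auxiliary conjecture. One should be candid at the outset about what is achievable: even in the flat case, induction-on-scales schemes of this type yield the conjecture only with an $R^{\varepsilon}$ loss on balls of radius $R$, and the removal of this loss at the sharp endpoint $q=\tfrac{2d}{d-1}$ required algebraic polynomial-partitioning methods developed after \cite{BCT}. Any purely induction-on-scales argument based on Theorem \ref{t:main} will be subject to the same structural limitation.

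Concretely, I would first use multilinear interpolation against the trivial bound $\|\prod_j \mathcal{E}_j g_j\|_\infty\leq\prod_j\|g_j\|_1$ to reduce the full range of $(p,q)$ in the conjecture to the single endpoint $(2,\tfrac{2d}{d-1})$. Next, localise to a ball $B_R$ of radius $R$ and study the growth as $R\to\infty$ of the best constant $A(R)$ in the endpoint estimate on $B_R$. A standard wavepacket decomposition at scale $R^{-1/2}$ writes each $g_j=\sum_\tau g_{j,\tau}$ indexed by $R^{-1/2}$-caps $\tau$ on $S_j$, with $\mathcal{E}_j g_{j,\tau}$ essentially supported on a tube $T_{j,\tau}$ of dimensions $R^{1/2}\times\cdots\times R^{1/2}\times R$ normal to $S_j$ at the centre of $\tau$. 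An $L^2$-orthogonality step on the wavepacket side, combined with pigeonholing on tube amplitudes, reduces the endpoint estimate to controlling the nonlinear multilinear Kakeya integral
\begin{equation*}
\int_{B_R}\prod_{j=1}^{d}\Bigl(\sum_{\tau\in\mathcal{T}_j}\chi_{T_{j,\tau}}(\xi)\Bigr)^{\!1/(d-1)}\mathrm{d}\xi
\end{equation*}
uniformly in transversal families $\mathcal{T}_j$ whose tube directions vary along the curved hypersurfaces $S_j$.

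This is precisely where Theorem \ref{t:main} enters. After rescaling $B_R$ to unit size so that each $T_{j,\tau}$ becomes a tube of unit thickness in $B_1$, the characteristic function of each tube can be written as $F_{j,\tau}\circ B_j$, where $B_j:\mathbb{R}^d\to\mathbb{R}^{d-1}$ is the $C^{1,\beta}$ submersion that records the ``address'' of the normal fibre of $S_j$ through $\xi$, and $F_{j,\tau}$ is the indicator of a small disc in $\mathbb{R}^{d-1}$. The transversality hypothesis of Conjecture \ref{MLRC} at the origin is exactly the direct-sum condition \eqref{e:directsumnonlin} on $(\mathrm{d}B_j(0))$; consequently, Theorem \ref{t:main} furnishes the required nonlinear Loomis--Whitney bound for the sum of tubes, with constant depending only on the $C^{1,\beta}$ data of the $S_j$ and the transversality parameter. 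Combining this Kakeya estimate with the induction hypothesis applied on a partition of $B_R$ into sub-balls of radius $R^{1/2}$ should close a recursion of the form $A(R)\leq(1+O(R^{-\delta}))A(R^{1/2})$, which iterates to $A(R)\lesssim_{\varepsilon}R^\varepsilon$.

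The principal obstacle is the removal of the $R^\varepsilon$ at the endpoint: any scheme based on induction-on-scales together with multilinear Kakeya is structurally incapable of attaining the sharp bound there. What the plan above does deliver honestly is Conjecture \ref{MLRC} in the open range $q>\tfrac{2d}{d-1}$, by combining the localised $R^\varepsilon$ estimate at the endpoint with a standard $\varepsilon$-removal argument; the precise endpoint $q=\tfrac{2d}{d-1}$ lies beyond the techniques available from Theorem \ref{t:main} alone.
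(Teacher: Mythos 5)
You were asked about a statement that the paper itself records as a \emph{conjecture}: the authors neither prove Conjecture \ref{MLRC} nor claim to. What they actually extract from Theorem \ref{t:main} is the single global case $q=2d$, $p=\tfrac{2d-2}{2d-3}$ of \eqref{mr}, namely Theorem \ref{mrg}, and they obtain it softly: Plancherel converts \eqref{mrgest} into the multilinear convolution estimate of Theorem \ref{bhthigher}, which is deduced from Corollary \ref{lemma6}, i.e.\ from Theorem \ref{t:main} after the tensoring construction $B_j^\oplus$ that restores the direct-sum condition. There are no wavepackets and no induction on scales over balls in that chain. Your proposal, by its own admission, does not prove the conjecture either (you concede at best the open range $q>\tfrac{2d}{d-1}$ with an $\varepsilon$-removal step, which in the multilinear setting is not an off-the-shelf ``standard'' lemma), so as a proof of the stated result it is incomplete from the outset; moreover it would at best reproduce what is already in \cite{BCT}, rather than the new global estimate the paper actually derives.

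The concrete gap is the central claim that Theorem \ref{t:main} furnishes the multilinear Kakeya input of the \cite{BCT} scheme. Theorem \ref{t:main} concerns \emph{fixed} submersions $B_j$: the relevant ``tubes'' are the fibres of $B_j$, which foliate a neighbourhood --- through each point there passes exactly one fibre from each family, distinct fibres of a given family are disjoint, and the direction of a fibre is determined by its location. The tube families $\mathcal{T}_j$ arising from the wavepacket decomposition have none of this structure: a tube attached to a cap $\tau$ has direction normal to $S_j$ at the centre of $\tau$ but its position is an arbitrary translate, so direction and position are decoupled, and two tubes in the same family with different directions may intersect transversally. Consequently $\sum_{\tau}\chi_{T_{j,\tau}}$ cannot be written as $f_j\circ B_j$ for any single submersion $B_j$ and any $f_j$, and the reduction you describe fails at exactly this point. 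This is not a technicality: multilinear Kakeya is strictly stronger than (nonlinear) Loomis--Whitney, and in \cite{BCT} it requires its own heat-flow/induction argument; Theorem \ref{t:main} generalises Loomis--Whitney in the direction of nonlinear fibrations, not in the direction of variable tube directions within a family. If you want to use the results of this paper to say something about Fourier extension, the viable route is the paper's: pass through the convolution estimate of Theorem \ref{bhthigher} and Plancherel to get the $L^2$ statement of Theorem \ref{mrg}, leaving Conjecture \ref{MLRC} itself untouched.
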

\begin{remark}
Conjecture \ref{MLRC} implies Theorem \ref{bhthigher}. To see this
we first observe that for any function $f_j$ on $S_j$,
$\widehat{f_j\mathrm{d}\sigma_j}=\mathcal{E}_jg_j$ where
$g_j=f_j\circ\Sigma_j$. Now, if $2\leq q\leq\infty$ and
$p'=(d-1)q'$, then by the Hausdorff--Young inequality followed by
Conjecture \ref{MLRC},
\begin{align*}
\|f_1\mathrm{d}\sigma_1*\cdots *f_d\mathrm{d}\sigma_d\|_{L^q(\mathbb{R}^d)} & \leq \Bigl\|\prod_{j=1}^d\mathcal{E}_jg_j\Bigr\|_{L^{q'}(\mathbb{R}^d)} \\
& \leq C \prod_{j=1}^d\|g_j\|_{L^{p}(U_j)} = C\prod_{j=1}^d\|f_j\|_{L^{p}(\mathrm{d}\sigma_j)}.
\end{align*}
This link was observed for $d=3$ in \cite{BHT}.
\end{remark}
In \cite{BCT} a local form of Conjecture \ref{MLRC} was proved with
an $\varepsilon$-loss; namely for each $\varepsilon>0$ the above
conjecture was obtained with \eqref{mr} replaced by
\begin{equation}\label{mrk}
\Bigl\|\prod_{j=1}^{d}\mathcal{E}_{j}g_{j}\Bigr\|_{L^{q/d}(B(0,R))}\leq
C_\varepsilon R^\varepsilon\prod_{j=1}^{d} \|g_{j}\|_{L^{p}(U_j)},
\end{equation}
for all $R>0$. In \cite{BCW} the global estimate \eqref{mr} was
obtained for $d=3$ and $q=6$. Here we extend this global result to
all dimensions.
\begin{theorem}\label{mrg}
If $S_1,\hdots, S_d$ are transversal in a neighbourhood of the
origin then there exists a constant $C$ such that
\begin{equation}\label{mrgest}
\Bigl\|\prod_{j=1}^{d}\mathcal{E}_{j}g_{j}\Bigr\|_{L^{2}(\mathbb{R}^d)}\leq C
\prod_{j=1}^{d} \|g_{j}\|_{L^{\frac{2d-2}{2d-3}}(U_j)}
\end{equation}
for all $g_{1},\hdots,g_{d}$ supported in a sufficiently small
neighbourhood of the origin.
\end{theorem}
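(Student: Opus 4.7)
The plan is to derive Theorem \ref{mrg} directly from the convolution estimate of Theorem \ref{bhthigher} via Plancherel's theorem, thereby following at the $L^2$ endpoint the standard duality passage between multilinear extension and multilinear convolution estimates. This is in fact the reverse of the implication noted just after the statement of Conjecture \ref{MLRC}: the Hausdorff--Young argument there shows that extension bounds imply convolution bounds for $q \geq 2$, while at $q = 2$ the implication can be reversed by Plancherel.

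First I would, for each $1 \leq j \leq d$, identify the function $g_j$ on $U_j$ with the density $f_j$ on $S_j$ determined by $f_j \circ \Sigma_j = g_j$, so that $\mathcal{E}_j g_j(\xi) = \widehat{f_j \mathrm{d}\sigma_j}(-\xi)$ and consequently
\begin{equation*}
\prod_{j=1}^d \mathcal{E}_j g_j(\xi) = \bigl(f_1 \mathrm{d}\sigma_1 * \cdots * f_d \mathrm{d}\sigma_d\bigr)^{\wedge}(-\xi).
\end{equation*}
By Plancherel's theorem,
\begin{equation*}
\Bigl\|\prod_{j=1}^d \mathcal{E}_j g_j\Bigr\|_{L^2(\mathbb{R}^d)} = (2\pi)^{d/2}\, \|f_1 \mathrm{d}\sigma_1 * \cdots * f_d \mathrm{d}\sigma_d\|_{L^2(\mathbb{R}^d)},
\end{equation*}
with the interpretation that either side is finite exactly when the other is.

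Next I would invoke Theorem \ref{bhthigher} at the choice $q = 2$, whose hypothesis $p' \leq (d-1)q' = 2(d-1)$ is satisfied at the endpoint $p = \tfrac{2d-2}{2d-3}$ (where $p' = 2d-2$). The transversality and $C^{1,\beta}$ hypotheses on the $S_j$ carry over from Theorem \ref{mrg}, and the localisation of the $g_j$ near the origin forces the corresponding $f_j$ to be supported near the origin on $S_j$. This yields
\begin{equation*}
\|f_1 \mathrm{d}\sigma_1 * \cdots * f_d \mathrm{d}\sigma_d\|_{L^2(\mathbb{R}^d)} \leq C \prod_{j=1}^d \|f_j\|_{L^{(2d-2)/(2d-3)}(\mathrm{d}\sigma_j)} = C \prod_{j=1}^d \|g_j\|_{L^{(2d-2)/(2d-3)}(U_j)},
\end{equation*}
the last equality being the definition of $\mathrm{d}\sigma_j$ via $\Sigma_j$. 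Combining with the Plancherel identity gives \eqref{mrgest}.

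Consequently there is no genuine obstacle in this section: the substantive work is absorbed into Theorem \ref{bhthigher}, which in turn rests on Corollary \ref{lemma6} and ultimately on the nonlinear Brascamp--Lieb inequality of Theorem \ref{t:main}. The only point worth checking carefully is that Theorem \ref{bhthigher} is available at precisely the endpoint $(q,p) = (2, \tfrac{2d-2}{2d-3})$, which it is, and that the small-neighbourhood support condition required there matches the one in the hypothesis of Theorem \ref{mrg}. The $d = 3$ instance of this deduction is exactly the argument carried out in \cite{BCW}; the present proof is its straightforward extension to all dimensions, made possible by the higher-dimensional convolution inequality now at our disposal.
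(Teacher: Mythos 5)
Your proposal is correct and follows essentially the same route as the paper: identify $\mathcal{E}_jg_j$ with $\widehat{f_j\,\mathrm{d}\sigma_j}$, apply Plancherel to convert \eqref{mrgest} into the convolution estimate, and then invoke Theorem \ref{bhthigher} at the endpoint $q=2$, $p=\tfrac{2d-2}{2d-3}$. The paper states this more tersely, but there is no substantive difference in the argument.
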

\begin{proof}
By Plancherel's Theorem, \eqref{mrgest} is equivalent to the
estimate
$$
\|f_1\mathrm{d}\sigma_1*\cdots *f_d\mathrm{d}\sigma_d\|_{L^2(\mathbb{R}^d)}\leq C\prod_{j=1}^d\|f_j\|_{L^{\frac{2d-2}{2d-3}}(\mathrm{d}\sigma_j)},$$ where as before we are identifying $f_j$ with $g_j$ by $g_j=f_j\circ\Sigma_j$.
Theorem \ref{mrg} now follows immediately from Theorem
\ref{bhthigher}.
\end{proof}
\begin{remark}
The Lebesgue exponent $\tfrac{2d-2}{2d-3}$ on the right-hand side of
\eqref{mrgest} is best-possible given the $L^2$ norm on the left.
Again, at this level of multilinearity, the transversality
hypothesis prevents any additional curvature hypotheses from giving
rise to further improvement. See \cite{BCT} for further discussion.
\end{remark}

\appendix
\section{Proposition \ref{p:Finnerorth} implies Proposition \ref{p:linearcase}} \label{appendix:reduction}

Assume that, for each $1 \leq j \leq m$, $B_j : \mathbb{R}^d
\rightarrow \mathbb{R}^{d_j}$ is a linear surjection and
\eqref{e:directsum} holds. Let $\Pi_j : \mathbb{R}^d \rightarrow
\mathbb{R}^{d_j}$ be given by \eqref{e:Pijdefn} where $d_j'$ is the
dimension of $\ker B_j$.

Select \emph{any} set of vectors $\{a_k : k \in \mathcal{K}_j\}$
forming an orthonormal basis for $\ker B_j$; that is, the orthogonal
complement of the subspace spanned by the rows of $B_j$. By
definition of the Hodge star and orthogonality considerations it
follows that
\begin{equation} \label{e:XjBjstaridentity}
 \star X_j(B_j) = \|X_j(B_j)\|_{\Lambda^{d_j}(\mathbb{R}^d)} \bigwedge_{k \in \mathcal{K}_j} a_k.
\end{equation}
Here, $\| \cdot \|_{\Lambda^{d_j}(\mathbb{R}^d)} :
\Lambda^{d_j}(\mathbb{R}^d) \rightarrow [0,\infty)$ is the norm
induced by the standard inner product $\langle \cdot,\cdot
\rangle_{\Lambda^{d_j}(\mathbb{R}^d)} : \Lambda^{d_j}(\mathbb{R}^d)
\times \Lambda^{d_j}(\mathbb{R}^d) \rightarrow \mathbb{R}$ given by
\begin{equation*}
  \langle u_1 \wedge \cdots \wedge u_{d_j},v_1 \wedge \cdots \wedge v_{d_j} \rangle_{\Lambda^{d_j}(\mathbb{R}^d)} = \det( \langle u_k,v_\ell \rangle)_{1 \leq k,\ell \leq d_j}.
\end{equation*}

Let $A$ be the $d \times d$ matrix whose $i$th column is equal to
$a_i$ for each $1 \leq i \leq d$ and let $C_j$ be the $d_j \times
d_j$ matrix given by
\begin{equation*}
C_j = B_jA_j,
\end{equation*}
where $A_j$ is the $d \times d_j$ matrix obtained by deleting from
$A$ the columns $a_k$ for each $k \in \mathcal{K}_j$. Then, by
construction,
\begin{equation*}
\Pi_j = C_j^{-1}B_jA.
\end{equation*}
The matrices $A$ and $C_j$ are invertible by the hypothesis
\eqref{e:directsum}. Using $A$ to change variables one obtains
\begin{equation*} \label{e:A}
\int_{\mathbb{R}^d} \prod_{j=1}^m f_j(B_jx)^{\frac{1}{m-1}} \,\mathrm{d}x = |\det(A)| \int_{\mathbb{R}^d} \prod_{j=1}^m \widetilde{f}_j(\Pi_jx)^{\frac{1}{m-1}} \,\mathrm{d}x,
\end{equation*}
where $\widetilde{f}_j = f_j \circ C_j$, $1 \leq j \leq m$. By
Proposition \ref{p:Finnerorth} it follows that
\begin{eqnarray*}
\int_{\mathbb{R}^d} \prod_{j=1}^m f_j(B_jx)^{\frac{1}{m-1}} \,\mathrm{d}x & \leq & |\det(A)| \prod_{j=1}^m \bigg( \int_{\mathbb{R}^{d_j}} \widetilde{f}_j \,\mathrm{d}x \bigg)^{\frac{1}{m-1}} \\
& = & \frac{|\det(A)|}{\left(\prod_{j=1}^m |\det(C_j)|\right)^{\frac{1}{m-1}}} \prod_{j=1}^m \left( \int_{\mathbb{R}^{d_j}}
f_j \right)^{\frac{1}{m-1}}
\end{eqnarray*}
and it remains to check that
\begin{equation} \label{e:detAdetCstarwedge}
\frac{|\det(A)|}{\left(\prod_{j=1}^m |\det(C_j)|\right)^{\frac{1}{m-1}}} = \left|\star\bigwedge_{j=1}^m\star
X_j(B_j)\right|^{-\frac{1}{m-1}}.
\end{equation}
To this end, note that
\begin{equation*}
\star \bigwedge_{j=1}^m \star X_j(B_j) = \prod_{j=1}^m \|X_j(B_j)\|_{\Lambda^{d_j}(\mathbb{R}^d)} \star \bigwedge_{j=1}^m \bigwedge_{k \in \mathcal{K}_j} a_k
\end{equation*}
by \eqref{e:XjBjstaridentity} and therefore
\begin{equation} \label{e:starwedgeidentity}
  \star \bigwedge_{j=1}^m \star X_j(B_j) = \det(A) \prod_{j=1}^m \|X_j(B_j)\|_{\Lambda^{d_j}(\mathbb{R}^d)}
\end{equation}
since $\mathcal{K}_1,\ldots,\mathcal{K}_m$ partitions
$\{1,\ldots,d\}$.

Again use \eqref{e:XjBjstaridentity} to write
\begin{align*}
  |\det(C_j)| & = \bigg|\bigg\langle X_j(B_j), \bigwedge_{l \notin \mathcal{K}_j} a_{l} \bigg \rangle_{\Lambda^{d_j}(\mathbb{R}^d)} \bigg|\\
  & = \|X_j(B_j)\|_{\Lambda^{d_j}(\mathbb{R}^d)} \bigg|\bigg\langle \star\bigg(\bigwedge_{k \in \mathcal{K}_j} a_k \bigg), \bigwedge_{l \notin \mathcal{K}_j} a_{l} \bigg\rangle_{\Lambda^{d_j}(\mathbb{R}^d)} \bigg|
\end{align*}
and therefore, by definition of the Hodge star,
\begin{equation} \label{e:detCidentity}
  |\det(C_j)| = \|X_j(B_j)\|_{\Lambda^{d_j}(\mathbb{R}^d)} |\det(A)|.
\end{equation}
Now \eqref{e:detAdetCstarwedge} follows from
\eqref{e:starwedgeidentity} and \eqref{e:detCidentity}. This
completes the reduction of Proposition \ref{p:linearcase} to
Proposition \ref{p:Finnerorth}.

\section{A quantitative version of the implicit function theorem} \label{appendix:IFT}

We provide a quantitative version of the implicit function theorem
for $C^{1,\beta}$ functions which we used in the proof of
Proposition \ref{lemma6}.

Below we use the notation $B(0,R)$ to denote the open euclidean ball
centred at the origin with radius $R > 0$ in either $\mathbb{R}^n$
or $\mathbb{R}$; the dimension of the ball will be clear from the
context. Similarly, we denote by $\overline{B}(0,R)$ the closed
euclidean ball centred at the origin with radius $R > 0$.
\begin{theorem} \label{t:IFT}
  Suppose $n \in \mathbb{N}$ and $\beta, \kappa > 0$ are given. Let $R_1,R_2 > 0$ be given by
  \begin{equation} \label{e:R1R2}
    R_1 = \frac{1}{(100\kappa)^{1/\beta}} \min\left\{ 1, \frac{1}{10\kappa}\right\} \quad \text{and} \quad R_2 = \frac{1}{(100\kappa)^{1/\beta}}.
  \end{equation}
  If $F : \mathbb{R}^n \times \mathbb{R} \rightarrow \mathbb{R}$ is such that $\|F\|_{C^{1,\beta}} \leq \kappa$, $F(0,0)=0$ and $\partial_{n+1} F(0,0) = 1$ then there exists a function $\eta : B(0,R_1) \rightarrow \overline{B}(0,R_2)$ such that
  \begin{equation*}
  F(x,\eta(x)) = 0 \quad \text{for each $x$ belonging to $B(0,R_1)$},
  \end{equation*}
  and a constant $\widetilde{\kappa}$, depending on at most $n, \beta$, and $\kappa$, such that $\|\eta\|_{C^{1,\beta}} \leq \widetilde{\kappa}$.
\end{theorem}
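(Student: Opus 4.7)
The argument is the classical Banach fixed point approach to the implicit function theorem, with the quantitative choices of $R_1,R_2$ tuned so that all constants are explicit. For each fixed $x\in B(0,R_1)$ I define the map
\[
T_x(y)=y-F(x,y),\qquad y\in\overline{B}(0,R_2)\subset\mathbb{R},
\]
and look for $\eta(x)$ as the unique fixed point of $T_x$ in $\overline{B}(0,R_2)$. Since $\partial_{n+1}F(0,0)=1$ and $\|F\|_{C^{1,\beta}}\leq\kappa$, the mean value theorem gives
\[
|T_x(y)-T_x(y')|=\bigl|(1-\partial_{n+1}F(x,y^{*}))(y-y')\bigr|\leq \kappa(|x|^{2}+|y^{*}|^{2})^{\beta/2}|y-y'|
\]
for some intermediate $y^{*}$. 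Using $|x|\le R_1\le R_2$ and $R_2^{\beta}=1/(100\kappa)$, the right hand side is bounded by $\tfrac12|y-y'|$, so $T_x$ is a $\tfrac12$-contraction on $\overline{B}(0,R_2)$. To see that $T_x$ also sends $\overline{B}(0,R_2)$ into itself, I bound $|T_x(0)|=|F(x,0)|\le\kappa|x|\le \kappa R_1 \le R_2/10$, using $F(0,0)=0$, the gradient bound $\|\nabla F\|_{\infty}\le\kappa$, and the defining relation $R_1\le R_2/(10\max\{1,\kappa\})$ built into \eqref{e:R1R2}; combined with the contraction estimate this gives $|T_x(y)|\le|T_x(0)|+\tfrac12|y|\le R_2/10+R_2/2<R_2$. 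Hence Banach's theorem produces the unique $\eta(x)\in\overline{B}(0,R_2)$ with $F(x,\eta(x))=0$.

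Next I establish that $\eta$ is Lipschitz with a constant depending only on $\kappa$. Writing
\[
|\eta(x_1)-\eta(x_2)|=|T_{x_1}(\eta(x_1))-T_{x_2}(\eta(x_2))|,
\]
I split into $|T_{x_1}(\eta(x_1))-T_{x_1}(\eta(x_2))|+|T_{x_1}(\eta(x_2))-T_{x_2}(\eta(x_2))|$; the first piece is absorbed via the contraction estimate, while the second equals $|F(x_1,\eta(x_2))-F(x_2,\eta(x_2))|\le\kappa|x_1-x_2|$. This yields $|\eta(x_1)-\eta(x_2)|\le 2\kappa|x_1-x_2|$.

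Having Lipschitz control of $\eta$, I then derive the formula
\[
\nabla\eta(x)=-\frac{\nabla_{x}F(x,\eta(x))}{\partial_{n+1}F(x,\eta(x))}
\]
from standard implicit differentiation; the denominator stays bounded away from zero (in fact between $\tfrac12$ and $\tfrac32$) throughout $B(0,R_1)\times\overline{B}(0,R_2)$, by the Hölder estimate on $\partial_{n+1}F$ used earlier. Therefore $\eta\in C^{1}$, with $\|\nabla\eta\|_{\infty}\leq 2\kappa$.

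The main (and only mildly delicate) step is controlling the Hölder seminorm of $\nabla\eta$: I write
\[
\nabla\eta(x_1)-\nabla\eta(x_2)=-\frac{\nabla_{x}F(x_1,\eta(x_1))-\nabla_{x}F(x_2,\eta(x_2))}{\partial_{n+1}F(x_1,\eta(x_1))}+\nabla_{x}F(x_2,\eta(x_2))\,\bigl[\tfrac{1}{\partial_{n+1}F(x_2,\eta(x_2))}-\tfrac{1}{\partial_{n+1}F(x_1,\eta(x_1))}\bigr]
\]
and estimate each difference using the $C^{\beta}$ norm of $\nabla F$ together with the Lipschitz bound $|\eta(x_1)-\eta(x_2)|\leq 2\kappa|x_1-x_2|$. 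This gives $|(x_i,\eta(x_i))_1-(x_i,\eta(x_i))_2|\leq(1+2\kappa)|x_1-x_2|$, and hence
\[
|\nabla\eta(x_1)-\nabla\eta(x_2)|\leq C(n,\beta,\kappa)\,|x_1-x_2|^{\beta}.
\]
Collecting the sup bounds on $\eta$ and $\nabla\eta$ with this Hölder bound yields $\|\eta\|_{C^{1,\beta}}\leq\widetilde{\kappa}$ for some $\widetilde{\kappa}$ depending only on $n,\beta,\kappa$, as required. The only real obstacle is bookkeeping the constants through these last steps; no new idea beyond Banach contraction and the elementary arithmetic of the Hölder seminorm is needed.
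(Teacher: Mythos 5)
Your argument is correct and follows essentially the same route as the paper: a quantitative Banach fixed point argument for the map $y\mapsto y-F(x,y)$ on $\overline{B}(0,R_2)$, with the contraction constant coming from the H\"older bound on $\partial_{n+1}F$, the self-mapping property from the smallness of $|F(x,0)|$ on $B(0,R_1)$, and the $C^{1,\beta}$ bound on $\eta$ obtained from the implicit-differentiation formula together with the Lipschitz estimate on $\eta$. The only cosmetic difference is that the paper packages the Taylor remainder of $F$ into an auxiliary function $\Phi$ (which it also uses to verify differentiability of $\eta$ directly), whereas you invoke the mean value theorem and standard implicit differentiation; both routes give the same estimates with the same constants.
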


\begin{proof} The proof proceeds via a standard fixed point argument applied to
the map $\Psi_x : \overline{B}(0,R_2) \rightarrow \mathbb{R}$ given
by
\begin{equation*}
  \Psi_x(\eta) = \eta - F(x,\eta)
\end{equation*}
for fixed $x \in B(0,R_1)$. We shall prove that $\Psi_x$ is a
contraction which maps $\overline{B}(0,R_2)$ to itself.

Let $\Phi : (\mathbb{R}^n \times \mathbb{R})^2 \rightarrow
\mathbb{R}$ be the map given by
  \begin{equation*}
    \Phi((x_1,\eta_1),(x_2,\eta_2)) = \frac{F(x_2,\eta_2)-F(x_1,\eta_1)-dF(x_1,\eta_1)(x_2-x_1,\eta_2-\eta_1)}{|(x_2-x_1,\eta_2-\eta_1)|}
  \end{equation*}
  whenever $(x_1,\eta_1), (x_2,\eta_2) \in \mathbb{R}^n \times \mathbb{R}$ are distinct, and zero otherwise. By the mean value theorem and the fact that $\|F\|_{C^{1,\beta}} \leq \kappa$ it follows that $\Phi$ is everywhere continuous and
  \begin{equation} \label{e:Phi}
    |\Phi((x_1,\eta_1),(x_2,\eta_2))| \leq 1/4 \quad \text{for all} \quad (x_j,\eta_j) \in \overline{B}(0,R_2) \times \overline{B}(0,R_2).
  \end{equation}
  For each $\eta_1, \eta_2 \in \overline{B}(0,R_2)$ we have
  \begin{equation*}
    \Psi_x(\eta_1) - \Psi_x(\eta_2) = (1- \partial_{n+1}F(x,\eta_1))(\eta_1 - \eta_2) + \Phi((x,\eta_1),(x,\eta_2))|\eta_1-\eta_2|.
  \end{equation*}
  Since $\partial_{n+1}F(0,0) = 1$ and $\|F\|_{C^{1,\beta}} \leq \kappa$ it follows that
  \begin{equation} \label{e:eta}
    |1-\partial_{n+1}F(x,\eta)| \leq 1/4 \quad \text{whenever} \quad (x,\eta) \in \overline{B}(0,R_2) \times \overline{B}(0,R_2).
  \end{equation}
  Hence, by \eqref{e:Phi} and \eqref{e:eta} it follows that
  \begin{equation} \label{e:contraction}
    |\Psi_x(\eta_1) - \Psi_x(\eta_2)| \leq \tfrac{1}{2}|\eta_1-\eta_2|
  \end{equation}
  and $\Psi_x$ is a contraction.

  Now let $\eta \in \overline{B}(0,R_2)$. Using the hypothesis $\|F\|_{C^{1,\beta}} \leq \kappa$, along with \eqref{e:contraction} and \eqref{e:R1R2}, it follows that
  \begin{equation*}
    |\Psi_x(\eta)| \leq |\Psi_x(\eta) - \Psi_x(0)| + |\Psi_x(0)| \leq R_2.
  \end{equation*}
  Hence $\Psi_x(\overline{B}(0,R_2)) \subseteq \overline{B}(0,R_2)$. By the Banach fixed point theorem, there exists a mapping $\eta : B(0,R_1) \rightarrow \overline{B}(0,R_2)$ such that $\Psi_x(\eta(x)) = \eta(x)$, or equivalently $F(x,\eta(x)) = 0$, for each $x \in B(0,R_1)$.

  It remains to show that $\eta$ belongs to $C^{1,\beta}$ and $\|\eta\|_{C^{1,\beta}} \leq \widetilde{\kappa}$ for some constant $\widetilde{\kappa}$ depending on at most $n, \beta$ and $\kappa$. To see that $\eta$ is differentiable, fix $x,h \in B(0,R_1)$ such that $x+h \in B(0,R_1)$. Since $F(x+h,\eta(x+h)) = F(x,\eta(x))$ it follows that
  \begin{equation*}
    dF(x,\eta(x))(h,\eta(x+h)-\eta(x)) + \Phi((x,\eta(x)),(x+h,\eta(x+h)))|(h,\eta(x+h)-\eta(x))| = 0
  \end{equation*}
  and therefore
  \begin{align*}
    & \partial_{n+1}F(x,\eta(x))(\eta(x+h)-\eta(x)) \\ & = - \langle \nabla_x F(x,\eta(x)), h\rangle - \Phi((x,\eta(x)),(x+h,\eta(x+h))|(h,\eta(x+h)-\eta(x))|.
  \end{align*}
  Note that by \eqref{e:Phi} and \eqref{e:eta} it follows that
  \begin{align*} \label{e:etacont}
    |\eta(x+h)-\eta(x)| \leq C|h|
  \end{align*}
  for some finite constant $C$ independent of $h$. Moreover, $\Phi$ is continuous and vanishes along the diagonal. It follows that $\eta$ is differentiable at $x$ and
  \begin{equation*}
    \nabla \eta(x) = - \frac{\nabla_x F(x,\eta(x))}{\partial_{n+1} F (x,\eta(x))}.
  \end{equation*}
  Using $\|F\|_{C^{1,\beta}} \leq \kappa$ and \eqref{e:eta} one quickly obtains the inequality $\|\eta\|_{C^{1,\beta}} \leq \widetilde{\kappa}$ for some constant $\widetilde{\kappa}$ depending only on $n, \beta$ and $\kappa$.
\end{proof}

\end{document}